\documentclass[12pt,a4paper]{article}
\usepackage{oldgerm}
\usepackage{amssymb}
\usepackage{amsthm}
\newtheorem{thm}{Theorem}[section]
\newtheorem{prop}[thm]{Proposition}
\newtheorem{lem}[thm]{Lemma}
\newtheorem{defi}[thm]{Definition}
\newtheorem{cor}[thm]{Corollary}
\newtheorem{rem}{Remark}[section]
\begin{document}
\title{The volume growth of hyperk\"ahler manifolds of type $A_{\infty}$}
\author{Kota Hattori}
\date{}
\maketitle
{\begin{center}
{\it Graduate School of Mathematical Sciences, University of Tokyo\\
3-8-1 Komaba, Meguro, Tokyo 153-8914, Japan\\
kthatto@ms.u-tokyo.ac.jp}
\end{center}}
\maketitle
{\abstract We study the volume growth of hyperk\"ahler manifolds of type $A_{\infty}$ constructed by Anderson-Kronheimer-LeBrun \cite{AKL} and Goto \cite{G1}. These are noncompact complete $4$-dimensional hyperk\"ahler manifolds of infinite topological type. These manifolds have the same topology but the hyperk\"ahler metrics are depends on the choice of parameters. By taking a certain parameter, we show that there exists a hyperk\"ahler manifold of type $A_{\infty}$ whose volume growth is $r^{\alpha}$ for each $3 <\alpha <4$.}
\section{Introduction}
\quad A hyperk\"ahler manifold is, by definition, a Riemannian manifold $(X,g)$ of real dimension $4n$ equipped with three complex structures $I_1,I_2,I_3$ satisfying the quaternionic relations $I_1^2 = I_2^2 = I_3^2 = I_1I_2I_3 = -{\rm id}$ with respect to all of which the metric $g$ is K\"ahlerian. Then the holonomy group of $g$ is a subgroup of $Sp(n)$ and $g$ is Ricci-flat.\\
\quad 
In this paper we focus on the volume growth of hyperk\"ahler metrics. The notion of the volume growth are considered for a Riemannian manifold $(X,g)$. We denote by $V_g(p_0,r)$ the volume of the ball $B_g(p_0,r)\subset X$ of radius $r$ centered at $p_0\in X$. Then we say that the volume growth of $g$ is $f(r)$ if the condition
\begin{eqnarray}
0 < \liminf_{r\to +\infty}\frac{V_g(p_0,r)}{f(r)} \le \limsup_{r\to +\infty}\frac{V_g(p_0,r)}{f(r)} < +\infty.\nonumber
\end{eqnarray}
holds for some $p_0\in X$. From Bishop-Gromov comparison theorem \cite{BC}\cite{GLP}, the above condition is independent of $p_0\in X$ if $g$ has the nonnegative Ricci curvature.\\
\quad 
For instance, the volume growth of Euclidean space $\mathbb{R}^4$ is $r^4$ and the volume growth of $\mathbb{R}^3\times S^1$ with the flat metric is $r^3$. These are trivial examples and there are also nontrivial examples such as ALE hyperk\"ahler metrics constructed in \cite{EH}\cite{GH}\cite{K} whose volume growth is $r^4$, and multi-Taub-NUT metrics \cite{H}\cite{NTU}\cite{T} whose volume growth is $r^3$.\\
\quad 
Thus there are several examples of complete hyperk\"ahler manifolds whose volume growth is $r^k$ for positive integers $k$. On the other hand, there exist complete Ricci-flat K\"ahler manifolds of complex dimension $n$ whose volume growth are $r^{(2n)/(n+1)}$ \cite{BK1}\cite{BK2}\cite{TY1}.\\
\quad 
In this paper we will show that there is a family of complete hyperk\"ahler manifolds of real dimension $4$ whose volume growth is less than $r^4$ and more than $r^3$.\\
\quad 
Anderson, Kronheimer and LeBrun constructed the hyperk\"ahler manifolds of type $A_{\infty}$ by Gibbons Hawking ansatz in \cite{AKL}, which are $4$-dimensional noncompact complete hyperk\"ahler manifolds of infinite topological type. Here infinite topological type means that the homology groups are infinitely generated. The same metrics were constructed by hyperk\"ahler quotinet method due to Goto in \cite{G1}. Each of the metrics in \cite{AKL} is constructed from an element of
\begin{eqnarray}
({\rm Im}\mathbb{H})_0^{\mathbb{Z}}:=\{\lambda = (\lambda_n)_{n\in\mathbb{Z}}\in({\rm Im}\mathbb{H})^{\mathbb{Z}};\ \sum_{n\in \mathbb{Z}}\frac{1}{1 + |\lambda_n|}<+\infty\},\nonumber
\end{eqnarray}
where $\mathbb{H}$ is the quaternions and ${\rm Im}\mathbb{H}$ the $3$-dimensional subspace formed by the purely imaginary quaternions. We denote by $(X_{\lambda},g_{\lambda})$ the hyperk\"ahler metric of type $A_{\infty}$ constructed from $\lambda=(\lambda_n)_{n\in\mathbb{Z}}\in ({\rm Im}\mathbb{H})_0^{\mathbb{Z}}$. The purpose of this paper is studying the asymptotic behavior of $V_{g_{\lambda}}(p_0,r)$ for some $p_0\in X_{\lambda}$, and observe how the volume growth of $g_{\lambda}$ depends on the choice of $\lambda$. The main result is described as follows.
\begin{thm}
For each $\lambda\in ({\rm Im}\mathbb{H})_0^{\mathbb{Z}}$ and $p_0\in X_{\lambda}$, the function $V_{g_{\lambda}}(p_0,r)$ satisfies
\begin{eqnarray}
0 < \liminf_{r\to +\infty}\frac{V_{g_{\lambda}}(p_0,r)}{r^2\tau_{\lambda}^{-1}(r^2)} \le \limsup_{r\to +\infty}\frac{V_{g_{\lambda}}(p_0,r)}{r^2\tau_{\lambda}^{-1}(r^2)} < +\infty,\nonumber
\end{eqnarray}
where the fuction $\tau_{\lambda}:\mathbb{R}_{\ge 0}\to\mathbb{R}_{\ge 0}$ is defined by
\begin{eqnarray}
\tau_{\lambda}(R):=\sum_{n\in\mathbb{Z}}\frac{R^2}{R + |\lambda_n|}\nonumber
\end{eqnarray}
for $R\ge 0$.
\end{thm}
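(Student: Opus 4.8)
The plan is to exploit the Gibbons–Hawking description of $(X_\lambda,g_\lambda)$ to turn the problem into a weighted Euclidean integral estimate. Recall that $X_\lambda$ is the total space of a circle fibration $\pi:X_\lambda\to\mathbb{R}^3\setminus\Lambda$, $\Lambda:=\{\lambda_n\}_{n\in\mathbb{Z}}$, completed by one point over each $\lambda_n$, with $g_\lambda=V_\lambda\,g_{\mathrm{eucl}}+V_\lambda^{-1}\theta^{2}$, where $V_\lambda(x)=\sum_{n}|x-\lambda_n|^{-1}$ (the sum converges since $\sum_n(1+|\lambda_n|)^{-1}<\infty$ forces $\sum_n|\lambda_n|^{-1}<\infty$) and $\theta$ is a connection $1$-form with $d\theta=\pi^{*}(\ast dV_\lambda)$. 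The Riemannian volume form is $V_\lambda\,dx\wedge\theta$, so integrating along the fibres, $\mathrm{Vol}_{g_\lambda}(\pi^{-1}(\Omega))=\ell_0\int_\Omega V_\lambda\,dx$ for every $\Omega\subset\mathbb{R}^3$, where $\ell_0=\oint_{\mathrm{fibre}}\theta$. By the Bishop–Gromov remark quoted above (valid since $g_\lambda$ is Ricci-flat) it suffices to prove the claim for one basepoint, and I take $p_0$ in the fibre over $0$; write $B_\rho=\{x\in\mathbb{R}^3:|x|<\rho\}$. Thus everything reduces to (i) estimating $\int_{B_\rho}V_\lambda\,dx$, and (ii) identifying, for a given $r$, the radius $\rho=\rho(r)$ for which $B_{g_\lambda}(p_0,r)$ and $\pi^{-1}(B_\rho)$ are comparable.

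For (i) I would use Newton's theorem: the mean of $|x-\lambda_n|^{-1}$ on the sphere $\{|x|=s\}$ equals $\max(s,|\lambda_n|)^{-1}$, so $\int_{\{|x|=s\}}V_\lambda\,dA=4\pi s^{2}\sum_n\max(s,|\lambda_n|)^{-1}$, which is comparable to $4\pi s^2\sum_n(s+|\lambda_n|)^{-1}=4\pi\tau_\lambda(s)$ because $\max(s,|\lambda_n|)\le s+|\lambda_n|\le 2\max(s,|\lambda_n|)$. Integrating in $s$ gives $\int_{B_\rho}V_\lambda\,dx\asymp\int_0^{\rho}\tau_\lambda(s)\,ds$. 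The elementary termwise inequalities $2\tau_\lambda(R)\le\tau_\lambda(2R)\le 4\tau_\lambda(R)$ (hence $\tau_\lambda(R)\gtrsim R$ for $R\ge 1$, and $\tau_\lambda(cR)\asymp\tau_\lambda(R)$ for each fixed $c>0$) then give $\int_0^{\rho}\tau_\lambda(s)\,ds\asymp\rho\,\tau_\lambda(\rho)$, so that $\mathrm{Vol}_{g_\lambda}(\pi^{-1}(B_\rho))\asymp\rho\,\tau_\lambda(\rho)$.

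For (ii) the content is that $\rho(r)$ may be taken with $\tau_\lambda(\rho(r))\asymp r^{2}$, i.e.\ $\rho(r)\asymp\tau_\lambda^{-1}(r^{2})$. One inclusion is free: since $g_\lambda\ge\pi^{*}(V_\lambda\,g_{\mathrm{eucl}})$, $\pi$ is $1$-Lipschitz onto $(\mathbb{R}^3,V_\lambda\,g_{\mathrm{eucl}})$, and the pointwise bound $V_\lambda(x)\ge\sum_n(|x|+|\lambda_n|)^{-1}=\tau_\lambda(|x|)/|x|^{2}$ shows that any curve from $0$ to $\{|x|=\rho\}$ has $V_\lambda g_{\mathrm{eucl}}$-length at least $\int_0^{\rho}\sqrt{\tau_\lambda(s)}\,s^{-1}\,ds\ge c_1\sqrt{\tau_\lambda(\rho)}$ (again by the doubling of $\tau_\lambda$). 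Hence $B_{g_\lambda}(p_0,r)\subset\pi^{-1}(B_\rho)$ whenever $\tau_\lambda(\rho)\ge c_1^{-2}r^{2}$, which, combined with (i), already gives $V_{g_\lambda}(p_0,r)\lesssim r^{2}\tau_\lambda^{-1}(r^{2})$. The reverse inclusion — equivalently, the construction of short paths — is the main obstacle. Here I would first use Cauchy–Schwarz against the spherical-mean identity to produce a set of directions $\hat u\in S^{2}$ of positive measure with $\int_0^{\rho}\sqrt{V_\lambda(s\hat u)}\,ds\lesssim\sqrt{\tau_\lambda(\rho)}$, then reach a given point $x$ of the ``good'' set $G_\rho:=\{x\in B_\rho:V_\lambda(x)\le K\tau_\lambda(|x|)/|x|^{2}\}$ by travelling radially in such a direction and then along an arc on the sphere $\{|y|=|x|\}$, \emph{routed around the poles} $\lambda_n$; the fibre direction then contributes at most $\ell_0 V_\lambda^{-1/2}\lesssim\sqrt{\tau_\lambda(\rho)}$, using $\tau_\lambda(\rho)\gtrsim\rho$. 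A Chebyshev estimate on each sphere shows $G_\rho$ still carries $\int_{G_\rho}V_\lambda\,dx\gtrsim\rho\,\tau_\lambda(\rho)$, so $\pi^{-1}(G_\rho)\subset B_{g_\lambda}(p_0,c_2\sqrt{\tau_\lambda(\rho)})$ yields the matching lower bound $V_{g_\lambda}(p_0,r)\gtrsim r^{2}\tau_\lambda^{-1}(r^{2})$.

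Finally I would assemble the two inclusions: putting $\rho=\tau_\lambda^{-1}(a r^{2})$ for a suitable constant $a>0$ in each, and using that $\tau_\lambda$ is a continuous increasing bijection of $[0,\infty)$ with $\tau_\lambda(c\rho)\asymp\tau_\lambda(\rho)$, one obtains $V_{g_\lambda}(p_0,r)\asymp\rho\,\tau_\lambda(\rho)\asymp r^{2}\tau_\lambda^{-1}(r^{2})$, as asserted. The genuinely delicate step is the path construction behind the lower bound, in particular bounding the total length of the detours needed to skirt clusters of poles; one possible simplification is to prove first that $g_\lambda$ is quasi-isometric at large scale to the rotationally symmetric metric with potential $\tau_\lambda(|x|)/|x|^{2}$, for which (i) and (ii) are immediate, and then to reduce the general case to it.
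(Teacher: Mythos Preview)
Your overall strategy matches the paper's: reduce to estimating $\int_{B_\rho}V_\lambda$ via fibre integration, pin down the relation $\tau_\lambda(\rho)\asymp r^2$ by a distance lower bound and a path construction, and transfer independence of basepoint via Bishop--Gromov. Two points of comparison are worth recording.

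First, your distance lower bound is more elementary than the paper's. The paper (Proposition~3.1) obtains $d_{g_\lambda}(p_0,p)^2\ge Q_-\,|\mu_\lambda(p)|\,\varphi_\lambda(|\mu_\lambda(p)|)$ by going back to the infinite-dimensional hyperk\"ahler quotient and minimising $\|\Lambda-x\sigma\|_M$ over $\sigma\in G_\Lambda$; you get the same conclusion directly from the Gibbons--Hawking form using the pointwise bound $V_\lambda(x)\ge\tau_\lambda(|x|)/|x|^2$ and the monotonicity of $s\mapsto\tau_\lambda(s)/s^2$. Your argument is self-contained in the ansatz picture and avoids the Hilbert-space computation entirely. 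Similarly, your use of Newton's shell theorem gives the two-sided estimate $\int_{B_\rho}V_\lambda\asymp\rho\,\tau_\lambda(\rho)$ in one stroke, whereas the paper splits this into Proposition~3.4 (upper) and Proposition~4.1 (lower, for cones) with separate computations.

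Second, your path construction for the lower bound is more complicated than necessary, and the delicate step you flag can be bypassed. You aim to reach every point of the Chebyshev set $G_\rho$ by radial-plus-arc paths, which forces you to control arc lengths while ``routing around the poles''; you correctly note this is not straightforward. The paper avoids this entirely: having produced (by the same Cauchy--Schwarz/Chebyshev idea you describe) a set of directions $U\subset S^2$ of measure bounded below on which the radial integral $\int_0^\rho\sqrt{V_\lambda(s\hat u)}\,ds\lesssim\sqrt{\tau_\lambda(\rho)}$, it simply observes that the \emph{cone} $B_{\rho,U}$ over $U$ already satisfies $\pi^{-1}(B_{\rho,U})\subset B_{g_\lambda}(p_0,C\sqrt{\tau_\lambda(\rho)})$ via horizontal radial lifts alone (Lemma~4.4), and that $\mathrm{vol}_{g_\lambda}(\pi^{-1}(B_{\rho,U}))\gtrsim m_{S^2}(U)\,\rho\,\tau_\lambda(\rho)$ (Proposition~4.1). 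No arcs, no detours, no $G_\rho$. Your sketch becomes a complete proof once you drop the arc step and compute the volume of the cone over good directions directly; the fibre correction you mention (of size $\lesssim V_\lambda^{-1/2}\lesssim\sqrt{\tau_\lambda(\rho)}$) is then the only additional ingredient.
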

Applying Theorem 1.1 to some $\lambda\in ({\rm Im}\mathbb{H})_0^{\mathbb{Z}}$, we can find hyperk\"aler manifolds whose volume growth is given as follows.
\begin{thm}
$(1)$ There is a hyperk\"ahler manifold $(X_{\lambda},g_{\lambda})$ for each $3 <\alpha <4$ which satisfies
\begin{eqnarray}
0 < \liminf_{r\to +\infty}\frac{V_{g_{\lambda}}(p_0,r)}{r^{\alpha}} \le \limsup_{r\to +\infty}\frac{V_{g_{\lambda}}(p_0,r)}{r^{\alpha}} < +\infty.\nonumber
\end{eqnarray}
$(2)$ There is a hyperk\"ahler manifold $(X_{\lambda},g_{\lambda})$ which satisfies
\begin{eqnarray}
\lim_{r\to +\infty}\frac{V_{g_{\lambda}}(r)}{r^4} = 0,\quad \lim_{r\to +\infty}\frac{V_{g_{\lambda}}(r)}{r^{\alpha}} = +\infty\nonumber
\end{eqnarray}
for any $\alpha <4$.
\end{thm}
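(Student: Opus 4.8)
The plan is to derive both parts directly from Theorem 1.1 by exhibiting explicit sequences $\lambda$ and computing the growth of the elementary function $\tau_{\lambda}$ and of its inverse; no further geometry is needed. First I would record the basic properties of $\tau_{\lambda}$: for $\lambda\in({\rm Im}\mathbb{H})_0^{\mathbb{Z}}$ each summand $R\mapsto R^2/(R+|\lambda_n|)$ is continuous and strictly increasing on $[0,\infty)$, the series $\sum_n R^2/(R+|\lambda_n|)$ converges for every $R$ precisely because $\sum_n(1+|\lambda_n|)^{-1}<+\infty$, and $\tau_{\lambda}(R)\to+\infty$ (already finitely many terms force this). Hence $\tau_{\lambda}$ is a continuous increasing bijection of $[0,\infty)$ onto itself and $\tau_{\lambda}^{-1}$ is well defined. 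I would also record the trivial transfer principle: if increasing bijections $\tau,\sigma$ of $[0,\infty)$ satisfy $c_1\sigma(R)\le\tau(R)\le c_2\sigma(R)$ for $R$ large, then $\sigma^{-1}(y/c_2)\le\tau_{\lambda}^{-1}(y)\le\sigma^{-1}(y/c_1)$ for $y$ large; in particular $\tau_{\lambda}(R)\asymp R^{\beta}$ gives $\tau_{\lambda}^{-1}(y)\asymp y^{1/\beta}$, and $\tau_{\lambda}(R)\asymp R\log R$ gives $\tau_{\lambda}^{-1}(y)\asymp y/\log y$.

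For part $(1)$, given $\alpha\in(3,4)$ set $p=(\alpha-2)/(4-\alpha)$, so that $p>1$, and define $\lambda=(\lambda_n)$ by fixing a unit element $i\in{\rm Im}\mathbb{H}$ and putting $\lambda_n=|n|^{p}i$ (with $\lambda_0=0$). Then $\sum_n(1+|n|^{p})^{-1}<+\infty$, so $\lambda\in({\rm Im}\mathbb{H})_0^{\mathbb{Z}}$. Comparing the monotone series $\sum_{n\ge 1}R^2/(R+n^{p})$ with $\int_0^{\infty}R^2/(R+x^{p})\,dx=R^{1+1/p}\int_0^{\infty}du/(1+u^{p})$ (the last integral being finite since $p>1$), and absorbing the $n=0$ term $R$, which is $o(R^{1+1/p})$, I get $\tau_{\lambda}(R)\asymp R^{1+1/p}$ as $R\to+\infty$. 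By the transfer principle $\tau_{\lambda}^{-1}(y)\asymp y^{p/(p+1)}$, so Theorem 1.1 yields volume growth $r^2\,(r^2)^{p/(p+1)}=r^{(4p+2)/(p+1)}=r^{\alpha}$, as required.

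For part $(2)$, first observe that $V_{g_{\lambda}}(r)/r^{4}\to 0$ holds for \emph{every} $\lambda\in({\rm Im}\mathbb{H})_0^{\mathbb{Z}}$: by Theorem 1.1 it suffices that $\tau_{\lambda}^{-1}(y)=o(y)$, i.e. that $\tau_{\lambda}(R)/R=\sum_n R/(R+|\lambda_n|)\to+\infty$, which is clear because for each $N$ the partial sum over $|n|\le N$ tends to $2N+1$. The content is the second condition, for which I would take $\lambda_n=2^{|n|}i$ (so $\sum_n(1+2^{|n|})^{-1}<+\infty$ and $\lambda\in({\rm Im}\mathbb{H})_0^{\mathbb{Z}}$). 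Splitting $\tau_{\lambda}(R)=\sum_n R^2/(R+2^{|n|})$ into the ``bulk'' $2^{|n|}\le R$, where each of the $\asymp\log R$ terms lies between $R/2$ and $R$, and the ``tail'' $2^{|n|}>R$, which contributes at most $\sum_{2^{|n|}>R}R^2 2^{-|n|}=O(R)$, gives $\tau_{\lambda}(R)\asymp R\log R$ as $R\to+\infty$. Hence $\tau_{\lambda}^{-1}(y)\asymp y/\log y$, and Theorem 1.1 gives volume growth $\asymp r^{4}/\log r$; in particular $V_{g_{\lambda}}(r)/r^{4}\asymp 1/\log r\to 0$ and $V_{g_{\lambda}}(r)/r^{\alpha}\asymp r^{4-\alpha}/\log r\to+\infty$ for every $\alpha<4$.

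Everything above is elementary, and there is no serious obstacle once Theorem 1.1 is in hand; the only points requiring care are the two sum-to-asymptotics comparisons for $\tau_{\lambda}$ (controlling the low-index terms and the convergence of the tail, and in part $(2)$ separating the logarithmically many ``bulk'' terms from the geometrically small ``tail''), the elementary check that each chosen $\lambda$ satisfies the summability constraint defining $({\rm Im}\mathbb{H})_0^{\mathbb{Z}}$, and the transfer of two-sided bounds from $\tau_{\lambda}$ to $\tau_{\lambda}^{-1}$. In short, Theorem 1.2 is a matter of choosing the sequences $\lambda$ and doing the asymptotic bookkeeping.
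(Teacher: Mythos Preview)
Your approach is essentially the paper's own: in Section~6 the author likewise chooses $|\lambda_n|$ to grow polynomially (for part~(1)) and exponentially (for part~(2)), compares the sum defining $\tau_{\lambda}$ with the corresponding integral $\int_0^{\infty}R\,dx/(R+\lambda_{\mathbb{R}}(x))$, inverts the resulting asymptotic, and reads off the volume growth $r^{4-2/(p+1)}$, respectively $r^4/\log r$, from Theorem~1.1. One small technical slip: your concrete sequences $\lambda_n=|n|^{p}i$ and $\lambda_n=2^{|n|}i$ satisfy $\lambda_n=\lambda_{-n}$ and therefore fail the genericity condition (Definition~2.6) required for $X_{\lambda}$ to be a smooth hyperk\"ahler manifold; the paper sidesteps this by fixing a bijection $a:\mathbb{Z}_{>0}\to\mathbb{Z}$ and taking $\lambda_{a(n)}=n^{p}i$ (resp.\ $e^{\alpha n}i$), which makes all $\lambda_m$ distinct while leaving your asymptotic analysis of $\tau_{\lambda}$ unchanged since $\tau_{\lambda}$ depends only on the multiset $\{|\lambda_n|\}$.
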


Next we denote by $g_{\lambda}^{(s)}$ the Taub-NUT deformation of $g_{\lambda}$ where $s>0$ is the parameter of deformations. Then the volume growth of $g_{\lambda}^{(s)}$ is given by the following.
\begin{thm}
Let $\lambda\in ({\rm Im}\mathbb{H})_0^{\mathbb{Z}}$ and $s>0$. Then the volume growth of hyperk\"ahler metric $g_{\lambda}^{(s)}$ satisfies
\begin{eqnarray}
\lim_{r\to +\infty}\frac{V_{g_{\lambda}^{(s)}}(r)}{r^3} = \frac{8\pi^2}{3\sqrt{s}}.\nonumber
\end{eqnarray}
\end{thm}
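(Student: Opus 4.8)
The plan is to analyse $g^{(s)}_\lambda$ through its Gibbons--Hawking description. Recall that $X_\lambda$ is the total space of a circle fibration $\pi\colon X_\lambda\to\mathbb{R}^3$, the fibre collapsing exactly over the nuts $\lambda_n$, that $V_\lambda=\sum_{n}\frac{1}{2|x-\lambda_n|}$ is the associated positive harmonic function on $\mathbb{R}^3\setminus\{\lambda_n\}$, and that the Taub--NUT deformation is $g^{(s)}_\lambda=(V_\lambda+s)\,\pi^{*}g_{\mathbb{R}^3}+(V_\lambda+s)^{-1}\theta^{2}$, where $\theta$ is the Gibbons--Hawking connection $1$-form, normalised (as smoothness at the nuts forces) so that $\int_{\mathrm{fibre}}\theta=2\pi$. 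I would extract two facts. First, $\mathrm{dvol}_{g^{(s)}_\lambda}=(V_\lambda+s)\,dx^{1}\wedge dx^{2}\wedge dx^{3}\wedge\theta$, so
\[
\mathrm{Vol}_{g^{(s)}_\lambda}\bigl(\pi^{-1}(D)\bigr)=2\pi\int_{D}(V_\lambda+s)\,dx\qquad(D\subset\mathbb{R}^3).
\]
Second, fixing $p_0$ and putting $x_0=\pi(p_0)$, one has $\int_{B_{\mathbb{R}^3}(x_0,R)}V_\lambda\,dx=o(R^{3})$ as $R\to\infty$: by Newton's theorem $\int_{B(x_0,R)}\frac{dx}{2|x-\lambda_n|}$ equals $\frac{2\pi R^{3}}{3|\lambda_n-x_0|}$ when $|\lambda_n-x_0|\ge R$ and never exceeds $\pi R^{2}$, so
\[
\int_{B(x_0,R)}V_\lambda\,dx\ \le\ \pi R^{2}\,N_\lambda(R)+\frac{2\pi R^{3}}{3}\sum_{|\lambda_n-x_0|\ge R}\frac{1}{|\lambda_n-x_0|},
\]
with $N_\lambda(R)=\#\{n:|\lambda_n-x_0|\le R\}$; the last sum is the tail of a convergent series, and $N_\lambda(R)=o(R)$ follows from $\sum_n\frac1{1+|\lambda_n|}<\infty$ (if $N_\lambda(R_k)\ge cR_k$ along some $R_k\to\infty$ then $\sum_n\frac1{1+|\lambda_n|}\gtrsim\sum_k\bigl(1-N_\lambda(R_{k-1})/N_\lambda(R_k)\bigr)=\infty$). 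Consequently $\int_{B(x_0,R)}(V_\lambda+s)\,dx=\frac{4\pi}{3}sR^{3}+o(R^{3})$.

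The upper bound is then immediate: since $V_\lambda\ge0$, $g^{(s)}_\lambda\ge s\,\pi^{*}g_{\mathbb{R}^3}$, so $d_{g^{(s)}_\lambda}(p_0,y)\ge\sqrt{s}\,|\pi(y)-x_0|$ for all $y$; thus $B_{g^{(s)}_\lambda}(p_0,r)\subset\pi^{-1}\bigl(\overline{B}_{\mathbb{R}^3}(x_0,r/\sqrt{s})\bigr)$ and, by the two facts above,
\[
V_{g^{(s)}_\lambda}(p_0,r)\ \le\ 2\pi\int_{\overline{B}_{\mathbb{R}^3}(x_0,r/\sqrt{s})}(V_\lambda+s)\,dx\ =\ \frac{8\pi^{2}}{3\sqrt{s}}\,r^{3}+o(r^{3}).
\]

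For the matching lower bound one must exhibit, for most $x\in\mathbb{R}^3$, a short curve in $X_\lambda$ from $p_0$ to $\pi^{-1}(x)$. Fix $\delta>0$ and then small $\varepsilon,\eta>0$. The bad locus $\Sigma_\varepsilon=\{x:V_\lambda(x)\ge\varepsilon s\}$ has $\mathrm{vol}\bigl(\Sigma_\varepsilon\cap B(x_0,R)\bigr)\le\frac1{\varepsilon s}\int_{B(x_0,R)}V_\lambda\,dx=o(R^{3})$. The key claim is that outside an exceptional set $E_R\subset B(x_0,R)$ with $\mathrm{vol}(E_R)=o(R^{3})$, every $x\in B(x_0,R)\setminus E_R$ can be joined to $x_0$ by a path $\gamma$ avoiding $\Sigma_\varepsilon$ of Euclidean length $\le(1+\eta)|x-x_0|$: one perturbs the segment $[x_0,x]$ to detour around the pieces of $\Sigma_\varepsilon$ it meets, and $\sum_n\frac1{1+|\lambda_n|}<\infty$ guarantees that for all but an $o(R^{3})$-measure set of $x$ these detours add only $o(|x-x_0|)$ in length. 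The horizontal lift $\widetilde\gamma$ of such a $\gamma$ has $g^{(s)}_\lambda$-length $\int_\gamma\sqrt{V_\lambda+s}\le\sqrt{(1+\varepsilon)s}\,(1+\eta)\,|x-x_0|$, and every point of $\pi^{-1}(x)$ lies within $\pi/\sqrt{s}$ of $\widetilde\gamma(1)$ along the fibre (of length $2\pi(V_\lambda(x)+s)^{-1/2}\le2\pi/\sqrt{s}$). For $\varepsilon,\eta$ small enough in terms of $\delta$ this yields $\pi^{-1}\bigl(B(x_0,R_\delta)\setminus E_{R_\delta}\bigr)\subset B_{g^{(s)}_\lambda}(p_0,r)$ for all large $r$, with $R_\delta=(1-\delta)r/\sqrt{s}$, and hence
\[
V_{g^{(s)}_\lambda}(p_0,r)\ \ge\ 2\pi\!\!\int_{B(x_0,R_\delta)\setminus E_{R_\delta}}\!\!\!(V_\lambda+s)\,dx\ \ge\ 2\pi s\Bigl(\tfrac{4\pi}{3}R_\delta^{3}-o(R_\delta^{3})\Bigr)\ =\ \frac{8\pi^{2}}{3\sqrt{s}}(1-\delta)^{3}r^{3}-o(r^{3}).
\]
Letting $r\to\infty$ and then $\delta\to0$, and combining with the upper bound, $\lim_{r\to\infty}V_{g^{(s)}_\lambda}(p_0,r)/r^{3}=\frac{8\pi^{2}}{3\sqrt{s}}$; the leading coefficient does not involve $p_0$, consistently with $g^{(s)}_\lambda$ being Ricci-flat.

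The main obstacle is the key claim about $E_R$: showing that the region where $V_\lambda$ is not small does not shadow a non-negligible part of $\mathbb{R}^3$ near infinity, so that almost every ray from $x_0$ can be bent around it at negligible relative cost. This is precisely where $\lambda\in(\mathrm{Im}\,\mathbb{H})^{\mathbb{Z}}_{0}$ enters essentially, and it is of the same nature as the distance estimates in the proof of Theorem~1.1. The remaining ingredients — the volume-form identity, $\int_{B(x_0,R)}V_\lambda=o(R^{3})$, and the two inclusions relating metric balls to $\pi$-preimages of Euclidean balls — are routine once that claim is available.
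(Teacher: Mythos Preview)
Your upper bound matches the paper's (Proposition~7.3): the distance comparison $d_{g^{(s)}_\lambda}(p_0,p)^2\ge\frac{s}{4}|\mu_\lambda(p)|^2$ and the volume identity ${\rm vol}_{g^{(s)}_\lambda}(\mu_\lambda^{-1}(B_R))=\pi\int_{B_R}(\Phi_\lambda+\frac{s}{4})$, combined with $\varphi_\lambda(R)=o(R)$, yield exactly your inequality. (In the paper's normalization the deformed potential is $\Phi_\lambda+\frac{s}{4}$ rather than your $V_\lambda+s$, but the final constant agrees.)

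For the lower bound the two approaches genuinely differ. You propose to detour around the bad set $\Sigma_\varepsilon=\{V_\lambda\ge\varepsilon s\}$ so that $\sqrt{V_\lambda+s}\le\sqrt{(1+\varepsilon)s}$ holds pointwise along the path; your acknowledged ``key claim'' is that such detours cost only $o(|x-x_0|)$ for most $x$. The paper instead keeps the \emph{straight} radial rays and uses the subadditivity $\sqrt{\Phi_\lambda+\frac{s}{4}}\le\sqrt{\Phi_\lambda}+\sqrt{\frac{s}{4}}$. The second term contributes exactly $\frac{\sqrt{s}}{2}R$ along every ray, while the first, \emph{averaged over $S^2$}, is bounded by $\sqrt{C_+R\varphi_\lambda(R)}=o(R)$ via the dyadic Cauchy--Schwarz estimate of Proposition~4.2 (the same lemma that drives Theorem~1.1). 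A Chebyshev argument on $S^2$ (Lemma~7.5) then produces a set of directions of measure arbitrarily close to $4\pi$ on which the radial length is at most $(\varepsilon+\frac{\sqrt{s}}{2})R$; the cone over this set lies inside the metric ball and has the required volume (Proposition~7.7). This completely sidesteps your key claim: one never needs $\Phi_\lambda$ to be small along the path, only that $\int\sqrt{\Phi_\lambda}$ is small on average over directions. Your detour argument is plausible but would need real work --- $\Sigma_\varepsilon$ contains a ball of fixed radius about every $\lambda_n$, and controlling the total extra length uniformly over most endpoints $x$ requires care about how the $\lambda_n$ distribute --- whereas the paper's splitting trick shows that work is unnecessary.
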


In Section 2, we review the construction and the basic properties of hyperk\"ahler manifolds of type $A_{\infty}$. Although there are two constructions by Gibbons-Hawking ansatz and hyperk\"ahler quotient method, we adopt the latter way according to \cite{G1} since the argument in Section 3 is due to Goto's construction. Then we obtain the hyperk\"ahler manifold $(X_{\lambda},g_{\lambda})$ as a hyperk\"ahler quotient, and there are an $S^1$-action on $X_{\lambda}$ preserving the hyperk\"ahler structure and the hyperk\"ahler moment map $\mu_{\lambda}:X_{\lambda}\to {\rm Im}\mathbb{H}$.\\
\quad For the proof of Theorem 1.1, we need upper and lower bounds for the function $V_{g_{\lambda}}(p_0,r)$, which are discussed in Sections 3 and 4, respectively. In Section 3 the upper bound for $V_{g_{\lambda}}(p_0,r)$ will be obtained as follows. We fix $p_0\in X_{\lambda}$ to be $\mu_{\lambda}(p_0)=0\in{\rm Im}\mathbb{H}$. Then we obtain two inequalities
\begin{eqnarray}
{\rm vol}_{g_{\lambda}}(\mu_{\lambda}^{-1}(B_R)) &\le& P_+ R^2\varphi_{\lambda}(R),\nonumber\\
d_{g_{\lambda}}(p_0,p)^2 &\ge& Q_-|\mu_{\lambda}(p)|\cdot\varphi_{\lambda}(|\mu_{\lambda}(p)|),\nonumber
\end{eqnarray}
for each $p\in X_{\lambda}$, where ${\rm vol}_{g_{\lambda}}$ is the Riemannian measure, $d_{g_{\lambda}}$ is the Riemannian distance, $B_R:=\{\zeta\in {\rm Im}\mathbb{H};\ |\zeta |<R\}$, and $P_+$, $Q_-$ are positive constants. The former inequality is obtained from the hyperk\"ahler construction, and the latter one is obtained from Gibbons Hawking ansatz. By combining these inequalities we have the upper bound for $V_{g_{\lambda}}(p_0,r)$.\\
\quad Next we discuss the lower bound for $V_{g_{\lambda}}(p_0,r)$ in Section 4. If we try to bound the function $V_{g_{\lambda}}(p_0,r)$ from below in a similar method as in Section 3, then we need to show the inequality
\begin{eqnarray}
d_{g_{\lambda}}(p_0,p)^2 &\le& Q_+|\mu_{\lambda}(p)|\cdot\varphi_{\lambda}(|\mu_{\lambda}(p)|),\nonumber
\end{eqnarray}
which seems to be hard to show for any $p\in X_{\lambda}$. But it does not mean that the author can give a counterexample to the inequality. In Section 4, we take an open subset of $B_{g_{\lambda}}(p_0,r)$ which consists of points $p\in B_{g_{\lambda}}(p_0,r)$ satisfying the inequality $d_{g_{\lambda}}(p_0,p)^2 \le Q_+|\mu_{\lambda}(p)|\cdot\varphi_{\lambda}(|\mu_{\lambda}(p)|)$, and compute the volume of the subset, which is the lower bound for $V_{g_{\lambda}}(p_0,r)$.\\
\quad From two types of estimates obtained in Sections 3 and 4, we prove Theorem 1.1 in Section 5.\\
\quad We compute the volume growth of some examples concretely in Section 6, and obtain Theorem 1.2. Section 7 is devoted to studying the volume growth of the Taub-NUT deformations of $(X_{\lambda},g_{\lambda})$.\\
\quad \\
{\it Acknowledgment.}\ The author would like to thank Professor Hiroshi Konno for several advice on this paper. The author was supported by Grant-in-Aid for JSPS Fellows ($20\cdot 7215$).\\

\section{Hyperk\"ahler manifolds of type $A_{\infty}$}
\quad In this section, we review the construction of hyperk\"ahler manifolds of type $A_{\infty}$ according to \cite{G1}. Although they can be constructed by Gibbons-Hawking ansatz \cite{AKL}, we need hyperk\"ahler quotient construction in \cite{G1} for arguments in Section 3. Before the construction, we start with some basic definitions.
\begin{defi}
{\rm Let $(X,g)$ be a Riemannian manifold of dimension $4n$ and $I_1,I_2,I_3$ be complex structures on $X$ compatible with $g$. Then $(X,g,I_1,I_2,I_3)$ is a hyperk\"ahler manifold if $(I_1,I_2,I_3)$ satisfies the quaternionic relations $I_1^2 = I_2^2 = I_3^2 = I_1I_2I_3 = -{\rm id}$ and each $\omega_i:=g(I_i\cdot,\cdot)$ is closed for $i=1,2,3$.}
\end{defi}
Let $\mathbb{H}=\mathbb{R}\oplus\mathbb{R}i\oplus\mathbb{R}j\oplus\mathbb{R}k=\mathbb{C}\oplus\mathbb{C}j$ be the quaternions and ${\rm Im}\mathbb{H} = \mathbb{R}i\oplus\mathbb{R}j\oplus\mathbb{R}k$ the $3$-dimensional subspace formed by the purely imaginary quaternions. We can combine three fundamental $2$-forms into a single ${\rm Im}\mathbb{H}$-valued $2$-form $\omega:=\omega_1 i + \omega_2 j+ \omega_3 k$. Then it turns out that three complex structures and the metric are determined by the $2$-form $\omega$. Hence we call $\omega$ the hyperk\"ahler structure on $X$ instead of $(g,I_1,I_2,I_3)$.\\
\quad Let $G$ be a Lie group acting on a manifold $X$. Then each element $\xi$ of the Lie algebra of $G$ generates a vector field $\xi^*\in\mathcal{X}(X)$ defined by $\xi^*_x:=\frac{d}{dt}|_{t=0}x\exp (t\xi)\in T_xX$ for $x\in X$.
\begin{defi}
{\rm Suppose that the $n$-dimensional torus $T^n$ with Lie algebra $t^n$ acts smoothly on a hyperk\"ahler manifold $(X,\omega)$ preserving the hyperk\"ahler structure $\omega$. Then the hyperk\"ahler moment map $\mu :X\to {\rm Im}\mathbb{H}\otimes (t^n)^*$ for the action of $T^n$ on $X$ is defined by two properties (i) $\mu$ is $T^n$-invariant, (ii) $\langle d\mu_x(V),\xi\rangle = \omega(\xi^*_x,V_x)\in {\rm Im}\mathbb{H}$ for all $x\in X$ and $V\in T_xX$. Here $\langle ,\rangle : {\rm Im}\mathbb{H}\otimes (t^n)^*\otimes t^n\to {\rm Im}\mathbb{H}$ is the natural pairing induced from the contraction.}
\end{defi}
\quad Next we review the construction of hyperk\"ahler manifolds of type $A_{\infty}$. Let $S^{\mathbb{Z}}$ be the set of maps from the integers $\mathbb{Z}$ to a set $S$. Then each $x\in S^{\mathbb{Z}}$ is expressed in the sequence $x=(x_n)_{n\in\mathbb{Z}}$ where $x_n\in S$. Then we define a Hilbert space $M$ in the infinite sequences of the quaternions by
\begin{eqnarray}
M:=\{v\in\mathbb{H}^{\mathbb{Z}};\ \| v\|_M^2<+\infty\},\nonumber
\end{eqnarray}
where the Hilbert metric is given by
\begin{eqnarray}
\langle u,v\rangle_M:=\sum_{n\in \mathbb{Z}}u_n\bar{v}_n,\quad \| v\|^2_M:=\langle v,v\rangle_M\nonumber
\end{eqnarray}
for $u,v\in \mathbb{H}^{\mathbb{Z}}$.\\
\quad Put $\mathbb{H}^{\mathbb{Z}}_0:=\{\Lambda\in\mathbb{H}^{\mathbb{Z}};\ \sum_{n\in \mathbb{Z}}(1+|\Lambda_n|^2)^{-1}<+\infty\}$. Then for each $\Lambda\in\mathbb{H}^{\mathbb{Z}}_0$, we have the following Hilbert manifolds
\begin{eqnarray}
M_{\Lambda} &:=& \Lambda + M = \{\Lambda + v ;\ v\in M\},\nonumber\\
U_{\Lambda} &:=& \{g=(g_n)_{n\in\mathbb{Z}}\in (S^1)^{\mathbb{Z}};\ \|1_{\mathbb{Z}}-g\|^2_{\Lambda}<+\infty\},\nonumber\\
\mathbf{u}_{\Lambda} &:=& \{\xi=(\xi_n)_{n\in\mathbb{Z}}\in \mathbb{R}^{\mathbb{Z}};\ \|\xi\|^2_{\Lambda}<+\infty\},\nonumber\\
G_{\Lambda} &:=& \{g=(g_n)_{n\in\mathbb{Z}}\in U_{\Lambda};\ \prod_{n\in\mathbb{Z}}g_n=1\},\nonumber\\
\mathbf{g}_{\Lambda} &:=& \{\xi=(\xi_n)_{n\in\mathbb{Z}}\in \mathbf{u}_{\Lambda};\ \sum_{n\in\mathbb{Z}}\xi_n=0\},\nonumber
\end{eqnarray}
where
\begin{eqnarray}
\langle \xi,\eta\rangle_{\Lambda}:=\sum_{n\in \mathbb{Z}}(1+|\Lambda_n|^2)\xi_n\bar{\eta}_n, \quad \|\xi\|^2_{\Lambda}:=\langle \xi,\xi\rangle_{\Lambda}\nonumber
\end{eqnarray}
for $\xi,\eta\in\mathbb{C}^{\mathbb{Z}}$. Here, $1_{\mathbb{Z}}\in (S^1)^{\mathbb{Z}}$ is the constant map $(1_{\mathbb{Z}})_n:=1$. The convergence of $\prod_{n\in\mathbb{Z}}g_n$ and $\sum_{n\in\mathbb{Z}}\xi_n$ follows from the condition $\sum_{n\in \mathbb{Z}}(1+\|\Lambda_n\|^2)^{-1}<+\infty$. Then $G_{\Lambda}$ is a Hilbert Lie group whose Lie algebra is $\mathbf{g}_{\Lambda}$. We can define a right action of $G_{\Lambda}$ on $M_{\Lambda}$ by $xg:=(x_ng_n)_{n\in \mathbb{Z}}$ for $x\in M_{\Lambda}$, $g\in G_{\Lambda}$. Here the product of $x_n$ and $g_n$ is given by regarding $S^1$ as the subset of $\mathbb{H}$ by the natural injections $S^1\subset\mathbb{C}\subset\mathbb{H}$.\\
\quad Since $M$ is a left $\mathbb{H}$-module defined by $h v:=(h v_n)_{n\in \mathbb{Z}}$ for $v\in M$ and $h\in\mathbb{H}$, $M$ has a hyperk\"ahler structure given by the left multiplication of $i,j,k$ and inner product $\langle ,\rangle_M$ on $M$. We denote by $I_1,I_2,I_3\in {\rm End}(M)$ complex structures induced by the left multiplication by $i,j,k$, respectively. Thus $M_{\Lambda}$ is an infinite dimensional hyperk\"ahler manifold and the action of $G_{\Lambda}$ on $M_{\Lambda}$ preserves the hyperk\"ahler structure. Then we define a map $\hat{\mu}_{\Lambda}:M_{\Lambda}\rightarrow {\rm Im}\mathbb{H}\otimes\mathbf{g}_{\Lambda}^*$ by
\begin{eqnarray}
\langle \hat{\mu}_{\Lambda}(x),\xi\rangle :=\sum_{n\in \mathbb{Z}}(x_ni\bar{x}_n - \Lambda_ni\bar{\Lambda}_n)\xi_n\ \in {\rm Im}\mathbb{H}\nonumber
\end{eqnarray}
for $x \in M_{\Lambda}$, $\xi\in\mathbf{g}_{\Lambda}$. This $\hat{\mu}_{\Lambda}$ is the hyperk\"ahler moment map for the action of infinite dimensional torus $G_{\Lambda}$ on $M_{\Lambda}$.\\
\quad Since $\hat{\mu}_{\Lambda}$ is $G_{\Lambda}$-invariant, then $G_{\Lambda}$ acts on the inverse image $\hat{\mu}_{\Lambda}^{-1}(0)$. Hence we obtain the quotient space $\hat{\mu}_{\Lambda}^{-1}(0)/G_{\Lambda}$ which is called hyperk\"ahler quotient. In general, there is no guarantee that $\hat{\mu}_{\Lambda}^{-1}(0)/G_{\Lambda}$ is a smooth manifold for every $\Lambda\in\mathbb{H}_0^\mathbb{Z}$. For the smoothness of $\hat{\mu}_{\Lambda}^{-1}(0)/G_{\Lambda}$ we need the following condition.
\begin{defi}
{\rm An element $\Lambda\in\mathbb{H}_0^\mathbb{Z}$ is generic if $\Lambda_ni\bar{\Lambda}_n - \Lambda_mi\bar{\Lambda}_m \neq 0$ for all distinct $n,m\in\mathbb{Z}$.}
\end{defi}
\begin{prop}
For a generic $\Lambda\in\mathbb{H}_0^\mathbb{Z}$, the Lie group $G_{\Lambda}$ acts freely on $\hat{\mu}_{\Lambda}^{-1}(0)$.
\end{prop}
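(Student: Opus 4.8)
The plan is to show that if $xg = x$ for some $x \in \hat{\mu}_{\Lambda}^{-1}(0)$ and $g \in G_{\Lambda}$, then $g = 1_{\mathbb{Z}}$, the identity of $G_{\Lambda}$. The fixed-point condition $xg = x$ reads $x_n g_n = x_n$ for every $n \in \mathbb{Z}$, so at each coordinate either $g_n = 1$ or $x_n = 0$. Hence the whole argument reduces to showing that at most one coordinate of $x$ can vanish: if $x_n = 0$ for two distinct indices $n \neq m$, then the constraint $\prod g_n = 1$ no longer forces $g_n = 1$ for all $n$ (one can put a nontrivial phase on the $n$-th slot and compensate on the $m$-th slot), producing a nontrivial stabilizer; conversely, if $x_n = 0$ for at most one index, then $g_n = 1$ for all but possibly one $n$, and the product constraint $\prod_{k} g_k = 1$ forces that last coordinate to be $1$ as well, so $g = 1_{\mathbb{Z}}$.

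So the heart of the matter is: for generic $\Lambda$, no point of $\hat{\mu}_{\Lambda}^{-1}(0)$ has two vanishing coordinates. Here is where genericity enters. Suppose $x \in \hat{\mu}_{\Lambda}^{-1}(0)$ with $x_n = 0 = x_m$, $n \neq m$. The moment map equation $\langle \hat{\mu}_{\Lambda}(x), \xi \rangle = \sum_k (x_k i \bar{x}_k - \Lambda_k i \bar{\Lambda}_k)\xi_k = 0$ must hold for all $\xi \in \mathbf{g}_{\Lambda}$, i.e. for all sequences $\xi$ in the appropriate Hilbert space with $\sum_k \xi_k = 0$. Testing against the element $\xi$ supported on $\{n,m\}$ with $\xi_n = 1$, $\xi_m = -1$ (this lies in $\mathbf{g}_{\Lambda}$ since it has finite support and zero sum), the sum collapses to $(x_n i \bar{x}_n - \Lambda_n i \bar\Lambda_n) - (x_m i \bar x_m - \Lambda_m i \bar\Lambda_m) = 0$. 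Using $x_n = x_m = 0$, this becomes $-\Lambda_n i \bar\Lambda_n + \Lambda_m i \bar\Lambda_m = 0$, i.e. $\Lambda_n i \bar\Lambda_n = \Lambda_m i \bar\Lambda_m$, contradicting the genericity hypothesis $\Lambda_n i \bar\Lambda_n - \Lambda_m i \bar\Lambda_m \neq 0$. Therefore at most one coordinate of any $x \in \hat{\mu}_{\Lambda}^{-1}(0)$ vanishes.

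Combining the two steps finishes the proof: given $x \in \hat{\mu}_{\Lambda}^{-1}(0)$ and $g \in G_{\Lambda}$ with $xg = x$, the previous paragraph shows $x_k \neq 0$ for all $k$ except possibly one index $k_0$; hence $g_k = 1$ for all $k \neq k_0$, and then $\prod_{k \in \mathbb{Z}} g_k = g_{k_0} = 1$ forces $g_{k_0} = 1$ as well, so $g = 1_{\mathbb{Z}}$. I do not anticipate a serious obstacle: the only point that needs a line of care is that the test element $\xi$ with $\xi_n = 1$, $\xi_m = -1$ and all other entries zero genuinely belongs to $\mathbf{g}_{\Lambda}$, which is immediate since it has finite support (hence finite $\|\cdot\|_{\Lambda}$-norm) and its entries sum to zero. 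One should also note in passing that $\hat{\mu}_{\Lambda}$ is indeed $G_{\Lambda}$-invariant so that $G_{\Lambda}$ acts on $\hat{\mu}_{\Lambda}^{-1}(0)$, but that has already been recorded in the text preceding the statement.
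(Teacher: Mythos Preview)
Your proposal is correct and follows essentially the same approach as the paper: both arguments reduce freeness to showing that at most one coordinate of $x\in\hat{\mu}_{\Lambda}^{-1}(0)$ can vanish, deduce this from testing the moment-map condition against $e_n-e_m\in\mathbf{g}_{\Lambda}$ together with genericity, and then use the product constraint $\prod_k g_k=1$ to pin down the last coordinate of $g$. The only cosmetic difference is that the paper first records the equivalent formulation that $x_n i\bar{x}_n-\Lambda_n i\bar{\Lambda}_n$ is independent of $n$ on $\hat{\mu}_{\Lambda}^{-1}(0)$, whereas you extract only the single pairing you need.
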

\begin{proof}
Let $e_l\in\mathbf{g}_{\Lambda}$ be defined by $e_l:=(e_{l,n})_{n\in \mathbb{Z}}$, where
\[ e_{l,n}=
\left\{
\begin{array}{ccc}
1 & (n=l), \\
0 & (n\neq l).
\end{array}
\right.
\]
Since $\mathbf{g}_{\Lambda}$ is generated by elements $e_l-e_m$, then $x\in M_{\Lambda}$ satisfies $\hat{\mu}_{\Lambda}(x)=0$ if and only if $\langle \hat{\mu}_{\Lambda}(x),e_l-e_m\rangle  = 0$ for all $l,m\in\mathbb{Z}$. Hence the value of $x_ni\bar{x}_n - \Lambda_ni\bar{\Lambda}_n$ is independent of $n\in\mathbb{Z}$ for $x\in \hat{\mu}_{\Lambda}^{-1}(0)$.\\
\quad Assume that there is a pair of $x\in \hat{\mu}_{\Lambda}^{-1}(0)$ and $g\in G_{\Lambda}$ satisfies $xg=x$. If $x_n\neq 0$ for any $n\in\mathbb{Z}$, then $g=1$. Therefore we may assume $x_s=0$ for some $s\in\mathbb{Z}$. Then we have $x_ni\bar{x}_n - \Lambda_ni\bar{\Lambda}_n = - \Lambda_si\bar{\Lambda}_s$ for all $n\in\mathbb{Z}$, which implies
\begin{eqnarray}
x_ni\bar{x}_n = \Lambda_ni\bar{\Lambda}_n - \Lambda_si\bar{\Lambda}_s \neq 0\nonumber
\end{eqnarray}
for $n\neq s$. Since $x_n=0$ if and only if $x_ni\bar{x}_n=0$, we have $x_n\neq 0$ if $n\neq s$. Thus we have shown $g_n=1$ if $n\neq s$, and also $g_s$ should be $1$ from the condition $\prod_{n\in\mathbb{Z}}g_n=1$.
\end{proof}
\begin{thm}[\cite{G1}]
If $\Lambda\in\mathbb{H}_0^\mathbb{Z}$ is generic, then $\hat{\mu}_{\Lambda}^{-1}(0)/G_{\Lambda}$ is a smooth manifold of dimension $4$, and the hyperk\"ahler structure on $M_{\Lambda}$ induces a hyperk\"ahler structure $\hat{\omega}_{\Lambda}$ on $\hat{\mu}_{\Lambda}^{-1}(0)/G_{\Lambda}$.
\end{thm}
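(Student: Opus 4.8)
This is Goto's infinite-dimensional hyperk\"ahler quotient \cite{G1}, and the plan is to carry out the Hilbert-manifold analogue of the usual hyperk\"ahler reduction in four steps: (i) show that $0$ is a regular value of $\hat{\mu}_{\Lambda}$, so that $\hat{\mu}_{\Lambda}^{-1}(0)$ is a closed Hilbert submanifold of $M_{\Lambda}$; (ii) show that the $G_{\Lambda}$-action on $\hat{\mu}_{\Lambda}^{-1}(0)$ is free and proper, so that the quotient is a smooth Hilbert manifold; (iii) check that this quotient is finite-dimensional of dimension $4$; (iv) descend the hyperk\"ahler structure.

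For step (i), differentiating the defining identity of $\hat{\mu}_{\Lambda}$ and identifying ${\rm Im}\mathbb{H}\otimes\mathbf{g}_{\Lambda}^*$ with $\mathbf{g}_{\Lambda}^{\oplus 3}$ via $\langle\cdot,\cdot\rangle_{\Lambda}$, one finds that the formal adjoint of $d\hat{\mu}_{\Lambda}|_x$ with respect to $\langle\cdot,\cdot\rangle_M$ is, up to sign, $(\xi_1,\xi_2,\xi_3)\mapsto I_1\xi^*_{1,x}+I_2\xi^*_{2,x}+I_3\xi^*_{3,x}$, where $\xi^*_x=(x_n i\xi_n)_{n}$ is the generated vector field; since $\|\xi^*_x\|_M^2=\sum_n|x_n|^2\xi_n^2$, the operator $d\hat{\mu}_{\Lambda}|_x$ is surjective if and only if the coercivity estimate $\sum_{n}|x_n|^2\eta_n^2\ge c\,\|\eta\|_{\Lambda}^2$ holds for all $\eta\in\mathbf{g}_{\Lambda}$ with some $c>0$. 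This is the essential analytic point, and it is where the hypotheses enter: $\Lambda\in\mathbb{H}_0^{\mathbb{Z}}$ forces $|\Lambda_n|\to\infty$, and since $x=\Lambda+v$ with $v\in M$ one has $|x_n|^2/(1+|\Lambda_n|^2)\to 1$ as $|n|\to\infty$, so $\inf\{|x_n|^2/(1+|\Lambda_n|^2):x_n\ne 0\}$ is a positive constant; the only obstruction is the at most one index $s$ with $x_s=0$ (unique by genericity and Proposition 2.6), whose contribution is absorbed by writing $\eta_s=-\sum_{n\ne s}\eta_n$ and applying Cauchy--Schwarz together with $\sum_n(1+|\Lambda_n|^2)^{-1}<\infty$. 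A surjective bounded operator of Hilbert spaces has splitting kernel, so the regular value theorem then yields step (i).

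For step (ii), freeness is exactly Proposition 2.6, while properness is checked by hand: if $x^{(k)}\to x$ and $x^{(k)}g^{(k)}\to y$ in $\hat{\mu}_{\Lambda}^{-1}(0)$, then $g^{(k)}$ subconverges in $G_{\Lambda}$ — coordinatewise where $x_n\ne 0$, the tail being controlled by the weighted estimate of step (i) and the at most one remaining coordinate by $\prod_n g_n=1$ — so the standard theorem that a free proper smooth action of a Hilbert Lie group admits a smooth quotient applies. Step (iii) is the subtlest point, since the naive dimension count $4\cdot\aleph_0-4\cdot\aleph_0$ is meaningless; I would handle it via the residual action of $U_{\Lambda}/G_{\Lambda}\cong S^1$. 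For $x\in\hat{\mu}_{\Lambda}^{-1}(0)$ the quantity $x_n i\bar{x}_n-\Lambda_n i\bar{\Lambda}_n$ is independent of $n$ (already observed in the proof of Proposition 2.6), so it defines a $G_{\Lambda}$-invariant map $\zeta\colon\hat{\mu}_{\Lambda}^{-1}(0)\to{\rm Im}\mathbb{H}$, and $|x_n|^2=|\Lambda_n i\bar{\Lambda}_n+\zeta|$ vanishes only on a discrete subset $D\subset{\rm Im}\mathbb{H}$. Over ${\rm Im}\mathbb{H}\setminus D$ every coordinate is nonzero and the fibre of the induced map $\bar{\zeta}$ on the quotient is a single $U_{\Lambda}/G_{\Lambda}$-orbit, hence a circle, so there the quotient is a $4$-dimensional manifold; near a point lying over a member of $D$ one writes the explicit $A_1$-type local chart, again of dimension $4$.

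Finally, step (iv) is routine: $\omega$ is $G_{\Lambda}$-invariant and $\iota_{\xi^*}\omega=d\langle\hat{\mu}_{\Lambda},\xi\rangle$ vanishes along $\hat{\mu}_{\Lambda}^{-1}(0)$, so $\omega|_{\hat{\mu}_{\Lambda}^{-1}(0)}$ is basic and descends to a closed ${\rm Im}\mathbb{H}$-valued $2$-form $\hat{\omega}_{\Lambda}$; identifying the tangent space of the quotient with the orthogonal complement of $\mathbf{g}_{\Lambda}\cdot x$ in $\ker d\hat{\mu}_{\Lambda}|_x$, which is invariant under $I_1,I_2,I_3$, shows that $\hat{\omega}_{\Lambda}$ is nondegenerate and satisfies the quaternionic relations. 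I expect the real work to be in steps (i)--(ii), where the closed-range and properness statements that are automatic in finite dimensions rest here on the growth $|\Lambda_n|\to\infty$ built into $\mathbb{H}_0^{\mathbb{Z}}$, and in step (iii), where the residual $S^1$-moment map — equivalently the Gibbons--Hawking description of \cite{AKL} — is precisely what forces the dimension to equal $4$.
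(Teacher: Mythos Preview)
The paper does not supply a proof of this theorem; it simply attributes the result to Goto \cite{G1} and notes in Remark~2.1 that Goto's argument, written for one particular $\Lambda$, extends without difficulty to arbitrary generic $\Lambda\in\mathbb{H}_0^{\mathbb{Z}}$. Your outline is therefore not being compared against anything in this paper but against the method of \cite{G1}, and in that respect it is faithful: Goto's construction is precisely the infinite-dimensional hyperk\"ahler reduction you describe, with the regular-value and closed-range statements resting on the growth $|\Lambda_n|\to\infty$ built into $\mathbb{H}_0^{\mathbb{Z}}$, the freeness supplied by genericity (your ``Proposition~2.6'' should read Proposition~2.4), and the $4$-dimensionality and local smoothness obtained via the residual $S^1$-action and the Gibbons--Hawking description --- exactly as you say.

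One remark on step~(iii): over the open set where all $x_n\ne 0$ your argument via the map $\bar\zeta$ to ${\rm Im}\mathbb{H}\setminus D$ is clean, but the phrase ``explicit $A_1$-type local chart'' near a point with $x_s=0$ hides the most delicate part of Goto's paper. There one must actually produce a local slice for the $G_{\Lambda}$-action and verify by hand that the quotient is modelled on $\mathbb{H}$; this is where \cite{G1} spends the most effort. Your sketch correctly flags this as the point requiring care, but as written it is a placeholder rather than an argument.
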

\begin{rem}
{\rm In \cite{G1}, the above theorem is proved in the case of
\[ \Lambda_n=
\left\{
\begin{array}{ccc}
ni & (n\ge 0), \\
nk & (n<0).
\end{array}
\right.
\]
But it is easy to show the theorem for the case of any generic $\Lambda\in\mathbb{H}_0^\mathbb{Z}$.}
\end{rem}
Take $\Lambda\in\mathbb{H}_0^\mathbb{Z}$ and $(e^{i\theta_n})_{n\in\mathbb{Z}}\in (S^1)^{\mathbb{Z}}$ and put $\Lambda' = (\Lambda_n e^{i\theta_n})_{n\in\mathbb{Z}}$. Then there is a canonical isomorphism of hyperk\"ahler structure between $\hat{\mu}_{\Lambda}^{-1}(0)/G_{\Lambda}$ and $\hat{\mu}_{\Lambda'}^{-1}(0)/G_{\Lambda'}$ as follows. Define a map $\hat{F}:\hat{\mu}_{\Lambda}^{-1}(0)\to\hat{\mu}_{\Lambda'}^{-1}(0)$ by 
\begin{eqnarray}
\hat{F}((x_n)_{n\in \mathbb{Z}}):=(x_ne^{i\theta_n})_{n\in \mathbb{Z}}.\nonumber
\end{eqnarray}
Since $\hat{F}$ is equivariant with respect to $G_{\Lambda}$ and $G_{\Lambda'}$-actions, we obtain $F:\hat{\mu}_{\Lambda}^{-1}(0)/G_{\Lambda}\to\hat{\mu}_{\Lambda'}^{-1}(0)/G_{\Lambda'}$, which preserves hyperk\"ahler structures $\hat{\omega}_{\Lambda}$ and $\hat{\omega}_{\Lambda'}$. In this case we have $\Lambda_n i \bar{\Lambda}_n = \Lambda'_n i \bar{\Lambda}'_n$ for all $n\in \mathbb{Z}$. Converesely, if we take $\Lambda,\Lambda'\in\mathbb{H}_0^\mathbb{Z}$ to be $(\Lambda_n i \bar{\Lambda}_n)_{n\in \mathbb{Z}} = (\Lambda'_n i \bar{\Lambda}'_n)_{n\in \mathbb{Z}}$, then there is $(e^{i\theta_n})_{n\in\mathbb{Z}}\in (S^1)^{\mathbb{Z}}$ such that $\Lambda'_n = \Lambda_n e^{i\theta_n}$. Thus $\hat{\mu}_{\Lambda}^{-1}(0)/G_{\Lambda}$ and $\hat{\mu}_{\Lambda'}^{-1}(0)/G_{\Lambda'}$ are isomorphic as hyperk\"ahler manifolds if $(\Lambda_n i \bar{\Lambda}_n)_{n\in \mathbb{Z}} = (\Lambda'_n i \bar{\Lambda}'_n)_{n\in \mathbb{Z}}$. Therefore we may put $(X_{\lambda},\omega_{\lambda}):=(\hat{\mu}_{\Lambda}^{-1}(0)/G_{\Lambda},\hat{\omega}_{\Lambda})$ for
\begin{eqnarray}
\lambda\in ({\rm Im}\mathbb{H})_0^{\mathbb{Z}} &:=& \{(\Lambda_n i \bar{\Lambda}_n)_{n\in \mathbb{Z}}\in({\rm Im}\mathbb{H})^{\mathbb{Z}};\ \Lambda\in\mathbb{H}_0^{\mathbb{Z}}\}\nonumber\\
&=& \{\lambda\in({\rm Im}\mathbb{H})^{\mathbb{Z}};\ \sum_{n\in \mathbb{Z}}\frac{1}{1 + |\lambda_n|}<+\infty\}\nonumber
\end{eqnarray}
and $\Lambda\in\hat{\nu}^{-1}(\lambda)$. Then the condition for $X_{\lambda}$ to be smooth is written as follows.
\begin{defi}
{\rm An element $\lambda\in({\rm Im}\mathbb{H})_0^\mathbb{Z}$ is generic if $\lambda_n - \lambda_m \neq 0$ for all distinct $n,m\in\mathbb{Z}$.}
\end{defi}
Then Theorem 2.5 implies that $(X_{\lambda},\omega_{\lambda})$ is a smooth hyperk\"ahler manifold for each generic $\lambda\in({\rm Im}\mathbb{H})_0^\mathbb{Z}$.\\
\quad We denote by $g_{\lambda}$ the hyperk\"ahler metric induced from $\omega_{\lambda}$. 
\begin{thm}[\cite{G1}]
Let $(X_{\lambda},g_{\lambda})$ be as above. Then the Riemannian metric $g_{\lambda}$ is complete.
\end{thm}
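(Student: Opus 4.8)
The plan is to deduce completeness of $(X_\lambda,g_\lambda)$ from the completeness of the ambient Hilbert manifold together with the fact, built into the hyperk\"ahler quotient construction of Theorem 2.5, that the projection $\pi:\hat\mu_\Lambda^{-1}(0)\to X_\lambda$ is a Riemannian submersion for the metric $\langle\,,\rangle_M$. Write $N_\Lambda:=\hat\mu_\Lambda^{-1}(0)$; being the preimage of a point under the continuous map $\hat\mu_\Lambda$, it is closed in $M_\Lambda$, and since $M_\Lambda=\Lambda+M$ is isometric to the Hilbert space $M$ via translation (hence complete), $N_\Lambda$ is complete for the distance induced by $\|\cdot\|_M$. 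Throughout we use that $\lambda$ (hence $\Lambda$) is generic, so by Proposition 2.4 the $G_\Lambda$-action on $N_\Lambda$ is free and $N_\Lambda$ is a genuine submanifold on which $\pi$ is a principal $G_\Lambda$-bundle.

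The key estimate is the lower bound
\[
d_{g_\lambda}(\pi(x),\pi(y))\;\ge\;\inf_{g\in G_\Lambda}\|x-yg\|_M\qquad(x,y\in N_\Lambda).
\]
To prove it I would fix a smooth curve $\gamma$ in $X_\lambda$ from $\pi(x)$ to $\pi(y)$ and take its horizontal lift $\tilde\gamma$ in $N_\Lambda$ starting at $x$. Horizontal lifts exist for all time: on a maximal interval $[0,T)$ the lift has finite length $L_{g_\lambda}(\gamma|_{[0,T)})$, hence is $\|\cdot\|_M$-Cauchy as $t\to T$ and converges to a point of $N_\Lambda$ by completeness, from which one continues the lift using a local trivialisation of $\pi$ near the limit point. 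Then $\tilde\gamma(1)=yg_0$ for some $g_0\in G_\Lambda$, and since $\pi$ is a Riemannian submersion $L_{g_\lambda}(\gamma)=L(\tilde\gamma)\ge\|x-yg_0\|_M$; taking the infimum over $\gamma$ gives the estimate.

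Granting this, completeness follows by a standard argument. Let $(\pi(x^{(k)}))_k$ be a $d_{g_\lambda}$-Cauchy sequence. Using the estimate and a telescoping choice of group elements $g_k$, after passing to a subsequence one may replace each $x^{(k)}$ by a point $x^{(k)}g_k$ of the same $G_\Lambda$-orbit so that $(x^{(k)}g_k)_k$ is $\|\cdot\|_M$-Cauchy; by completeness of $N_\Lambda$ it converges to some $x^\infty\in N_\Lambda$. Finally, since $\pi$ is distance non-increasing, $d_{g_\lambda}(\pi(x^{(k)}g_k),\pi(x^\infty))\le d_{N_\Lambda}(x^{(k)}g_k,x^\infty)$, where $d_{N_\Lambda}$ is the intrinsic distance of the submanifold $N_\Lambda\subset M_\Lambda$; near $x^\infty$ this intrinsic distance is comparable to $\|\cdot\|_M$, so the right-hand side tends to $0$. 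Hence $\pi(x^{(k)})\to\pi(x^\infty)$, and the Cauchy sequence converges.

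I expect the main obstacle to be the justification of the global existence of horizontal lifts and, relatedly, the properness of the $G_\Lambda$-action and the local-triviality of $\pi$ near an arbitrary point of $N_\Lambda$: these are the points where the infinite-dimensionality of $M_\Lambda$ and $G_\Lambda$ genuinely enters, and one must check that the slice/trivialisation machinery for principal bundles survives in this Hilbert setting (everything else being either finite-dimensional Riemannian geometry on $X_\lambda$ or soft Hilbert-space completeness). An alternative, once the local structure is understood, is to pass to the Gibbons--Hawking description: on the complement of the fibres over the points $\lambda_n$ the metric is $V\,|d\zeta|^2+V^{-1}\Theta^2$ with $V$ positive and locally bounded away from $0$ and $\infty$, the fibres over the $\lambda_n$ are interior smooth points, and one verifies directly via Hopf--Rinow that closed metric balls in $X_\lambda$ are compact; this is conceptually transparent but requires separate control of $V$ near the $\lambda_n$ and at infinity, so I would favour the quotient argument above.
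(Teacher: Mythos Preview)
The paper does not supply its own proof of this theorem; it is quoted from Goto \cite{G1}. So there is nothing in the present paper to compare your argument against, and your proposal should be judged on its own.

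Your strategy is sound. The key inequality $d_{g_\lambda}(\pi(x),\pi(y))\ge\inf_{g\in G_\Lambda}\|x-yg\|_M$ that you isolate is precisely the estimate the paper itself invokes later, in the proof of Proposition~3.1 (there with $x=\Lambda$). Note, however, that the paper obtains it by horizontally lifting a \emph{geodesic} from $p_0$ to $p$, which already presupposes completeness; you correctly avoid this circularity by lifting an arbitrary smooth curve and arguing, via closedness of $N_\Lambda$ in the complete space $M_\Lambda$, that the lift extends to the whole interval. The telescoping construction converting a $d_{g_\lambda}$-Cauchy sequence into a $\|\cdot\|_M$-Cauchy sequence of orbit representatives is standard and works because $G_\Lambda$ acts by $\|\cdot\|_M$-isometries.

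The obstacles you flag are the right ones, and they are not additional burdens: the smooth principal-bundle structure of $\pi$ (hence local trivialisations) is exactly the content of Theorem~2.5, proved in \cite{G1}, so you may take it as given; with that in hand, horizontal lifts exist locally by ODE theory in Banach spaces and your extension argument goes through. The one step that deserves an extra line is the very last: you pass from $\|\cdot\|_M$-convergence $x^{(k)}g_k\to x^\infty$ to $d_{g_\lambda}$-convergence of the images. This needs local comparability of intrinsic and extrinsic distance on the smooth Hilbert submanifold $N_\Lambda$ near $x^\infty$, which is true but should be stated. With that, the argument is complete.
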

We denote by $[x]\in \hat{\mu}_{\Lambda}^{-1}(0)/G_{\Lambda}$ the equivalence class represented by $x\in\hat{\mu}_{\Lambda}^{-1}(0)$. Then by putting
\begin{eqnarray}
[x]g:=[(\cdots,x_{-1},x_0g,x_1,\cdots,x_n,\cdots)]\nonumber
\end{eqnarray}
for $x=(x_n)_{n\in \mathbb{Z}}=(\cdots,x_{-1},x_0,x_1,\cdots,x_n,\cdots)\in\hat{\mu}_{\Lambda}^{-1}(0)$ and $g\in S^1$, we have the action of $S^1$ on $X_{\lambda}$. Then the hyperk\"ahler moment map $\mu_{\lambda}:X_{\lambda}\rightarrow {\rm Im}\mathbb{H}=\mathbb{R}^3$ is given by
\begin{eqnarray}
\mu_{\lambda}([x]):=x_0i\bar{x}_0 - \lambda_0 \in {\rm Im}\mathbb{H}.\nonumber
\end{eqnarray}
Since $x\in\hat{\mu}_{\Lambda}^{-1}(0)$, we have $x_0i\bar{x}_0 - \lambda_0 = x_ni\bar{x}_n - \lambda_n$, where we identify $(t^1)^*$ with $\mathbb{R}$ by taking a generator of $t^1$.\\
\quad If we put $Stab([x]):=\{g\in S^1;\ [x]g=[x]\}$ for $[x]\in X_{\lambda}$, then it is obvious that $Stab([x])=\{1\}$ if and only if $x_n\neq 0$ for any $n\in\mathbb{Z}$, otherwise $Stab([x])=S^1$. Hence we have a principal $S^1$-bundle $\mu_{\lambda}\big|_{X_{\lambda}^*}:X_{\lambda}^*\rightarrow Y_{\lambda}$ where
\begin{eqnarray}
X_{\lambda}^* &:=& \{[x]\in X_{\lambda};\ x_n\neq 0\ {\rm for\ all\ }n\in\mathbb{Z}\},\nonumber\\
Y_{\lambda} &:=& {\rm Im}\mathbb{H}\backslash \{- \lambda_n;\ n\in\mathbb{Z}\}\nonumber.
\end{eqnarray}
\begin{defi}
{\rm An $S^1$-action on a $4$-dimensional hyperk\"ahler manifold \\
$(X,\omega)$ is tri-Hamiltonian if the action preserves $\omega$ and there exists a hyperk\"ahler moment map $\mu :X\to {\rm Im}\mathbb{H}$ for the action of $S^1$.}
\end{defi}
\begin{thm}[\cite{GH}]
There exists a canonical one-to-one correspondence between the followings;
(i) a hyperk\"ahler manifold of real dimension $4$ with free tri-Hamiltonian $S^1$-action,
(ii) a principal $S^1$-bundle $\mu:X\rightarrow Y$ where $Y$ is an open subset of $\mathbb{R}^3$, and an $S^1$-connection $A$ on $X$ and a positive valued harmonic function $\Phi$ on $Y$ such that $\frac{dA}{2\sqrt{-1}} = \mu^*(*d\Phi)$. Here $*$ is the Hodge star operator with respect to the Euclidean metric on $\mathbb{R}^3$.
\end{thm}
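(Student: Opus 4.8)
The plan is to establish the correspondence by writing down explicit constructions in both directions and checking that they are mutually inverse; the content is the Gibbons--Hawking ansatz, so the work is local differential geometry together with one appeal to Hitchin's lemma.

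\textbf{From (ii) to (i).} Given the principal bundle $\mu:X\to Y\subset\mathbb{R}^3$, write $A=2\sqrt{-1}\,\theta$ where $\theta$ is a real $S^1$-connection $1$-form, let $\partial_t$ be the fundamental vector field of the action normalized by $\theta(\partial_t)=1$, and let $x_1,x_2,x_3$ be the Euclidean coordinates on $Y$. I would set
\begin{eqnarray}
g &:=& \Phi\,\mu^*(dx_1^2+dx_2^2+dx_3^2) + \Phi^{-1}\theta^2,\nonumber\\
\omega_i &:=& \mu^*dx_i \wedge \theta + \Phi\,\mu^*(dx_j\wedge dx_k)\nonumber
\end{eqnarray}
for each cyclic permutation $(i,j,k)$ of $(1,2,3)$. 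A pointwise computation in a local frame adapted to $\ker\theta\oplus\mathbb{R}\partial_t$ shows that $\omega_1,\omega_2,\omega_3$ satisfy the algebraic relations $\omega_i\wedge\omega_j=2\delta_{ij}\,\mathrm{vol}_g$ and that $g$ is the metric associated to the triple; by Hitchin's lemma the closedness of the $\omega_i$ then guarantees that the induced almost complex structures $I_i$ are integrable and that $(X,g,I_1,I_2,I_3)$ is hyperk\"ahler. The closedness of $\omega_i$ reduces, using $d\theta=\mu^*(*d\Phi)$, to the identity $-\mu^*dx_i\wedge d\theta+d\Phi\wedge\mu^*(dx_j\wedge dx_k)=0$, which holds because both terms equal $(\partial_i\Phi)\,\mu^*(dx_1\wedge dx_2\wedge dx_3)$; here the harmonicity $d(*d\Phi)=0$ of $\Phi$ is precisely the Bianchi-type condition that makes $\mu^*(*d\Phi)$ a legitimate curvature form. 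Finally $\iota_{\partial_t}\omega_i=\mu^*dx_i$ (up to orientation conventions) exhibits the action as tri-Hamiltonian with moment map $\mu$, and freeness is part of the hypothesis.

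\textbf{From (i) to (ii).} Conversely, let $(X,\omega)$ carry a free tri-Hamiltonian $S^1$-action with moment map $\mu:X\to\mathrm{Im}\,\mathbb{H}$ and fundamental vector field $\partial_t$. The defining property of the moment map gives $d\mu_i=\omega_i(\partial_t,\cdot)=g(I_i\partial_t,\cdot)$; since the quaternionic relations make $I_1\partial_t,I_2\partial_t,I_3\partial_t$ mutually orthogonal of length $|\partial_t|$, the $d\mu_i$ are linearly independent wherever $\partial_t\neq0$, hence everywhere by freeness, so $\mu$ is a submersion onto an open set $Y\subset\mathbb{R}^3$. Properness of the $S^1$-action together with a dimension count shows that $X$ (or, if necessary, after passing to $X/S^1$) is a principal $S^1$-bundle over $Y$ with bundle map $\mu$. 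Put $\Phi:=g(\partial_t,\partial_t)^{-1}$, which is $S^1$-invariant and descends to a positive function on $Y$, and $\theta:=\Phi\,g(\partial_t,\cdot)$, an $S^1$-invariant $1$-form with $\theta(\partial_t)=1$, i.e. a connection. The K\"ahler identities then give $\omega_i=\mu^*dx_i\wedge\theta+\Phi\,\mu^*(dx_j\wedge dx_k)$ and that the horizontal part of $g$ equals $\Phi\,\mu^*g_{\mathrm{eucl}}$, so $g$ has the Gibbons--Hawking form above. Since $\iota_{\partial_t}d\theta=L_{\partial_t}\theta-d(\iota_{\partial_t}\theta)=0$, the curvature $d\theta$ is basic, $d\theta=\mu^*\beta$; running the previous closedness computation backwards yields $\beta=*d\Phi$, and $0=d\,d\theta=\mu^*d(*d\Phi)$ gives $\Delta\Phi=0$. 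Taking $A:=2\sqrt{-1}\,\theta$ produces the data in (ii).

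\textbf{Mutual inverseness and the main difficulty.} It then remains to observe that passing from (ii) to (i) and back returns $(\mu,A,\Phi)$ up to the obvious gauge equivalence, and conversely, both of which are immediate from the explicit formulas. I expect the main obstacle to lie in the direction (i)$\Rightarrow$(ii): verifying that $\mu$ is a submersion with fibres exactly the $S^1$-orbits, so that one genuinely obtains a principal $S^1$-bundle over an open subset of $\mathbb{R}^3$, and then extracting the conformally Euclidean shape of the horizontal metric --- both steps require careful bookkeeping with the three K\"ahler forms and the relations $I_1I_2=I_3$, etc. Once the bundle structure and the form of $g$ are established, identifying the curvature with $*d\Phi$ and deducing the harmonicity of $\Phi$ is a short exterior-calculus computation.
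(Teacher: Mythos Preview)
Your proposal is correct and follows essentially the same route as the paper. The paper itself does not give a full proof of this theorem: it cites \cite{GH}, offers only a brief sketch (defining $\Phi$ via $1/\Phi(\mu(x))=4g_x(\xi^*_x,\xi^*_x)$ and $A$ via the orthogonal complement of the vertical distribution, and in the reverse direction writing down the same explicit formulas for $\omega_1,\omega_2,\omega_3$ that you give), and defers the details to \cite{G1}. Your write-up supplies more of those details --- the verification of closedness, the appeal to Hitchin's lemma, the argument that $\mu$ is a submersion --- but the constructions and the overall logic coincide with the paper's sketch up to normalization conventions.
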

Here we see a sketch of the proof, and the details can be seen in \cite{G1}.\\
\quad Let $(X,\omega)$ be a hyperk\"ahler manifold of dimension $4$ with a free $S^1$-action preserving $\omega$, and $\mu:X\to {\rm Im}\mathbb{H}$ the hyperk\"ahler moment map. Then $(Y,\Phi,A)$ is given by the followings. Let $Y$ be the image $\mu (X)$. Then the function $\Phi:Y\to\mathbb{R}$ is defined by
\begin{eqnarray}
\frac{1}{\Phi(\mu (x))}:=4 g_x(\xi^*_x,\xi^*_x)\nonumber
\end{eqnarray}
for $x\in X$, where $g$ is the hyperk\"ahler metric and $\xi:=\sqrt{-1}$ is a generator of Lie algebre of the Lie group $S^1$. For each $x\in X$, we denote by $V_x\subset T_xX$ the subspace spanned by $\xi^*_x$. Then the $S^1$-connection $A$ on $X$ is defined by the horizontal distribution $(H_x)_{x\in X}$ where $H_x\subset T_xX$ is the orthogonal complement of $V_x$.\\
\quad Conversely, let $\mu=(\mu_1,\mu_2,\mu_3):X\rightarrow Y\subset {\rm Im}\mathbb{H}$ be a principal $S^1$-bundle and $(\Phi,A)$ a pair consisting a positive valued harmonic function and a connection of the $S^1$-bundle with $\frac{dA}{2\sqrt{-1}} = \mu^*(*d\Phi)$. Then the hyperk\"ahler structure $\omega=(\omega_1,\omega_2,\omega_3)\in\Omega^2(X)\otimes {\rm Im}\mathbb{H}$ is defined by
\begin{eqnarray}
\omega_1:=d\mu_1\wedge \frac{A}{2\sqrt{-1}} + \mu^*\Phi d\mu_2\wedge d\mu_3,\nonumber\\
\omega_2:=d\mu_2\wedge \frac{A}{2\sqrt{-1}} + \mu^*\Phi d\mu_3\wedge d\mu_1,\nonumber\\
\omega_3:=d\mu_3\wedge \frac{A}{2\sqrt{-1}} + \mu^*\Phi d\mu_1\wedge d\mu_2.\nonumber
\end{eqnarray}
Then it follows from the condition $\frac{dA}{2\sqrt{-1}} = \mu^*(*d\Phi)$ that $\omega$ is closed.\\
\quad Theorem 2.9 gives the positive valued harmonic function $\Phi_{\lambda}$ on $Y_{\lambda}$ and $S^1$-connection $A_{\lambda}$ on $X_{\lambda}^*$. It is known in \cite{G1} that $\Phi_{\lambda}$ is given by
\begin{eqnarray}
\Phi_{\lambda}(\zeta) = \frac{1}{4}\sum_{n\in\mathbb{Z}}\frac{1}{|\zeta + \lambda_n|}\nonumber
\end{eqnarray}
for $\zeta\in Y_{\lambda}$.\\
\quad Let $(X,\omega)$ be a hyperk\"ahler manifold satisfying the condition (i) of Theorem 2.9, and $(Y,\Phi,A)$ be what corresponds to $(X,\omega)$ satisfying the condition (ii). Denote by $g$ the hyperk\"ahler metric of $\omega$ and let ${\rm vol}_g(B)$ be the volume of a subset $B\subset X$.
\begin{lem}
Let $U\subset {\rm Im}\mathbb{H}$ be an open set. Then we have the following formula
\begin{eqnarray}
{\rm vol}_{g}(\mu^{-1}(U)) = \pi\int_{\zeta\in U}\Phi(\zeta)d\zeta_1d\zeta_2d\zeta_3,\nonumber
\end{eqnarray}
where $\zeta = (\zeta_1,\zeta_2,\zeta_3)\in {\rm Im}\mathbb{H}$ is the Cartesian coordinate.
\end{lem}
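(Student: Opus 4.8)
The plan is to write the Riemannian volume form of $g$ explicitly in terms of the Gibbons--Hawking data $(\Phi,A)$ and then integrate it over the fibres of $\mu$, which are circles.

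First I would use that $(X,g,I_1)$ is a K\"ahler surface, so its Riemannian volume form is $\frac{1}{2}\,\omega_1\wedge\omega_1$ with respect to the complex orientation; since $I_1,I_2,I_3$ all induce the same orientation on $X$, this choice is harmless. Writing $\theta:=\frac{A}{2\sqrt{-1}}$ for the real connection $1$-form from Theorem 2.9, so that $\omega_1 = d\mu_1\wedge\theta + \mu^*\Phi\,d\mu_2\wedge d\mu_3$, and expanding the square, the terms $(d\mu_1\wedge\theta)\wedge(d\mu_1\wedge\theta)$ and $(\mu^*\Phi)^2(d\mu_2\wedge d\mu_3)\wedge(d\mu_2\wedge d\mu_3)$ both vanish, leaving
\begin{eqnarray}
\frac{1}{2}\,\omega_1\wedge\omega_1 = \mu^*\Phi\cdot d\mu_1\wedge d\mu_2\wedge d\mu_3\wedge\theta\nonumber
\end{eqnarray}
after reordering the factors. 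One checks the same $4$-form arises from $\frac{1}{2}\omega_2^2$ and $\frac{1}{2}\omega_3^2$, and that it equals $e^0\wedge e^1\wedge e^2\wedge e^3$ for the orthonormal coframe $e^0=(\mu^*\Phi)^{-1/2}\theta$, $e^a=(\mu^*\Phi)^{1/2}d\mu_a$, which is consistent with the identity $4g(\xi^*,\xi^*)=\Phi(\mu(\cdot))^{-1}$ recalled in the sketch of Theorem 2.9. I fix the orientation of $X$ so that this $4$-form is the volume form $dV_g$.

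Next I would integrate $dV_g$ over $\mu^{-1}(U)$ using the principal $S^1$-bundle structure $\mu:X\to Y$, which is a genuine principal bundle since the action is free by hypothesis (i) of Theorem 2.9. Cover $U$ by a locally finite family of open sets $V_{\alpha}$ over which $\mu^{-1}(V_{\alpha})\cong V_{\alpha}\times(\mathbb{R}/2\pi\mathbb{Z})$ with a fibre coordinate $t$ chosen so that $\partial_t=\xi^*$, where $\xi=\sqrt{-1}$ generates the Lie algebra; this is legitimate because $\exp(2\pi\xi)=1$ in $S^1$ and the action is free. Since $A$ is a principal connection, $A(\xi^*)=\xi$, hence $\theta(\xi^*)=\frac{1}{2}$, so on $\mu^{-1}(V_{\alpha})$ we may write $\theta=\frac{1}{2}\,dt+\mu^*\beta_{\alpha}$ for some $1$-form $\beta_{\alpha}$ on $V_{\alpha}$. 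Therefore $d\mu_1\wedge d\mu_2\wedge d\mu_3\wedge\theta=\frac{1}{2}\,d\zeta_1\wedge d\zeta_2\wedge d\zeta_3\wedge dt$ on $\mu^{-1}(V_{\alpha})$, and Fubini gives
\begin{eqnarray}
\int_{\mu^{-1}(V_{\alpha})}dV_g = \int_{V_{\alpha}}\Phi(\zeta)\left(\int_0^{2\pi}\frac{1}{2}\,dt\right)d\zeta_1 d\zeta_2 d\zeta_3 = \pi\int_{V_{\alpha}}\Phi(\zeta)\,d\zeta_1 d\zeta_2 d\zeta_3.\nonumber
\end{eqnarray}
Summing against a partition of unity subordinate to $\{V_{\alpha}\}$ yields the asserted identity on all of $U$.

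I do not expect a genuine obstacle here; the whole content is the bookkeeping of three constants, and that is the only point I would state with care: that $dV_g=\frac{1}{2}\omega_i^2$ (the standard K\"ahler identity applied to $I_1$), that the principal-connection normalization forces $\theta(\xi^*)=\frac{1}{2}$, and that the circle $S^1\subset\mathbb{C}$ has period $2\pi$ in the parameter dual to $\xi=\sqrt{-1}$; their product is the factor $\pi$. I would also note that $\mu^{-1}(U)$ contains no fixed point of the $S^1$-action, so the fibration argument applies over all of $U$ without modification.
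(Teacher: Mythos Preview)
Your proof is correct and follows essentially the same route as the paper: compute the volume form in the Gibbons--Hawking local coordinates to obtain $\frac{\mu^*\Phi}{2}\,d\mu_1\wedge d\mu_2\wedge d\mu_3\wedge dt$, integrate the $2\pi$-periodic fibre coordinate, and patch locally. The only cosmetic differences are that the paper simply asserts the volume form (whereas you derive it via $\tfrac{1}{2}\omega_1^2$ and the connection normalization $\theta(\xi^*)=\tfrac{1}{2}$), and the paper glues by a disjoint decomposition into trivializing open sets rather than a partition of unity.
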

\begin{proof}
It suffices to show the assertion for all open set $U\subset Y$.\\
\quad First of all we suppose that the principal $S^1$-bundle $\mu:\mu^{-1}(U)\to U$ is trivial. Then we can take a $C^{\infty}$ trivialization $\sigma:U\to\mu^{-1}(U)$ and define $C^{\infty}$ map $t:\mu^{-1}(U)\to\mathbb{R}/2\pi\mathbb{Z}$ by $t(\sigma(\zeta)e^{i\theta}):=\theta$ for $\zeta\in U$ and $\theta\in\mathbb{R}/2\pi\mathbb{Z}$ and obtain a local coordinate $(t,\mu_1,\mu_2,\mu_3)$ on $\mu^{-1}(U)$. Since we may write $dt = -\sqrt{-1}A + \sum_{l=1}^3a_ld\mu^l$ for some $a_1,a_2,a_3\in C^{\infty}(\mu^{-1}(U))$, the volume form ${\rm vol}_g$ is given by
\begin{eqnarray}
{\rm vol}_g &=& \frac{\mu^*\Phi}{2}(-\sqrt{-1}) d\mu_1 \wedge d\mu_2 \wedge d\mu_3 \wedge A\nonumber\\
&=& \frac{\mu^*\Phi}{2} d\mu_1 \wedge d\mu_2 \wedge d\mu_3\wedge dt.\nonumber
\end{eqnarray}
Thus we have
\begin{eqnarray}
{\rm vol}_g(\mu^{-1}(U)) &=& \int_{\mu^{-1}(U)}{\rm vol}_g\nonumber\\
&=& \int_{\mu^{-1}(U)}\frac{\mu^*\Phi}{2}d\mu_1 \wedge d\mu_2 \wedge d\mu_3\wedge dt\nonumber\\
&=& \pi\int_{\zeta\in U}\Phi(\zeta)d\zeta_1d\zeta_2d\zeta_3.\nonumber
\end{eqnarray}
For a general $U\subset Y$, we take open sets $U_1,U_2,\cdots,U_m\subset Y$ such that each principal $S^1$-bundle $\mu^{-1}(U_{\alpha})\to U_{\alpha}$ is trivial and $\coprod_{\alpha=1}^m U_{\alpha}\subset U \subset \overline{\coprod_{\alpha=1}^m U_{\alpha}}$. Then we have
\begin{eqnarray}
{\rm vol}_g(\mu^{-1}(U)) &=& \sum_{\alpha=1}^m {\rm vol}_g(\mu^{-1}(U_{\alpha}))\nonumber\\
&=& \sum_{\alpha=1}^m\pi\int_{\zeta\in U_{\alpha}}\Phi(\zeta)d\zeta_1d\zeta_2d\zeta_3\nonumber\\
&=& \pi\int_{\zeta\in U}\Phi(\zeta)d\zeta_1d\zeta_2d\zeta_3.\nonumber
\end{eqnarray}
\end{proof}
\section{The upper bound for the volume growth}
\quad For a Riemannian manifold $(X,g)$ and a point $p_0\in X$, we denote by $V_g(p_0,r)$ the volume of a ball $B_g(p_0,r):=\{p\in X;\ d_g(p_0,p)<r\}$ with respect to the Riemannian distance $d_g$. In this section, we will evaluate the upper bound for $V_{g_{\lambda}}(p_0,r)$ for the hyperk\"ahler manifold $(X_{\lambda},g_{\lambda})$.\\
\quad In Section 2, we supposed $\lambda\in ({\rm Im}\mathbb{H})_0^\mathbb{Z}$ to be generic for the smoothness of $(X_{\lambda},g_{\lambda})$. But the function $V_{g_{\lambda}}(p_0,r)$ is determined only by the Riemannian measure ${\rm vol}_{g_{\lambda}}$ and the Riemannian distance $d_{g_{\lambda}}$. Even if $\lambda\in ({\rm Im}\mathbb{H})_0^\mathbb{Z}$ is taken not to be generic, ${\rm vol}_{g_{\lambda}}$ and $d_{g_{\lambda}}$ can be extended to $X_{\lambda}$ naturally. Hence we take $\lambda\in ({\rm Im}\mathbb{H})_0^\mathbb{Z}$ not necessary to be generic from now on.\\
\quad Fix $p_0\in X_{\lambda}$ to be $\mu_{\lambda}(p_0)= - \lambda_0$. We may assume $\lambda_0=0$ since the hyperk\"ahler quotient constructed from $(\lambda_n-\lambda_0)_{n\in\mathbb{Z}}$ is isometric to $(X_{\lambda},g_{\lambda})$. For each $R > 0$, put
\begin{eqnarray}
\varphi_{\lambda}(R):=\sum_{n\in\mathbb{Z}}\frac{R}{R + |\lambda_n|},\nonumber
\end{eqnarray}
and $\varphi_{\lambda}(0):=\lim_{R\to +0}\varphi_{\lambda}(R)$.
\begin{prop}
We have the following inequality
\begin{eqnarray}
d_{g_{\lambda}}(p_0,p)^2\ge Q_-|\mu_{\lambda}(p)|\cdot\varphi_{\lambda}(|\mu_{\lambda}(p)|)\nonumber
\end{eqnarray}
for any $p\in X_{\lambda}$, where $Q_-=1/8$.
\end{prop}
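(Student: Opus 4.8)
The plan is to push curves down by $\mu_\lambda$ and estimate their lengths in the conformally Euclidean metric on $\mathrm{Im}\,\mathbb{H}$ determined by $\Phi_\lambda$. First I would recall, from Theorem 2.9 applied to the data $(Y_\lambda,\Phi_\lambda,A_\lambda)$, that on the open dense set $X_\lambda^*$ the hyperk\"ahler metric has the Gibbons--Hawking form
\[
g_\lambda \;=\; (\mu_\lambda^*\Phi_\lambda)\,\mu_\lambda^* g_{\mathbb{R}^3}\;+\;(\mu_\lambda^*\Phi_\lambda)^{-1}\left(\frac{A_\lambda}{2\sqrt{-1}}\right)^{2},
\]
so its horizontal part is exactly $\mu_\lambda^*(\Phi_\lambda\, g_{\mathbb{R}^3})$. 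Given any piecewise-$C^1$ curve $\gamma$ joining $p_0$ to $p$, since $X_\lambda^*$ is dense we may assume $\gamma$ meets $X_\lambda\setminus X_\lambda^*$ only at its endpoints (note $\mu_\lambda(p_0)=0=-\lambda_0$ does lie in the bad set). Writing $\beta:=\mu_\lambda\circ\gamma$, at every parameter with $\gamma(t)\in X_\lambda^*$ we have $|\dot\gamma|_{g_\lambda}^2\ge \Phi_\lambda(\beta(t))\,|\dot\beta(t)|^2$, the right-hand norm being Euclidean, by discarding the nonnegative vertical term. Integrating and taking the infimum over $\gamma$,
\[
d_{g_\lambda}(p_0,p)\;\ge\;\inf_{\beta}\int_0^1\sqrt{\Phi_\lambda(\beta(t))}\,|\dot\beta(t)|\,dt ,
\]
the infimum over curves $\beta$ in $\mathrm{Im}\,\mathbb{H}$ from $0$ to $\mu_\lambda(p)$.

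Next I would bound this conformal length from below using only radial distance. By $|\eta+\lambda_n|\le|\eta|+|\lambda_n|$ applied term by term,
\[
\Phi_\lambda(\eta)=\frac14\sum_{n\in\mathbb{Z}}\frac{1}{|\eta+\lambda_n|}\;\ge\;\frac14\sum_{n\in\mathbb{Z}}\frac{1}{|\eta|+|\lambda_n|}\;=:\;\psi_\lambda(|\eta|),
\]
where $\psi_\lambda$ is nonnegative and nonincreasing on $(0,\infty)$, with $\sqrt{\psi_\lambda}$ integrable near $0$ (the $n=0$ term behaves like $1/(4\rho)$ since $\lambda_0=0$). Putting $\rho(t):=|\beta(t)|$, which satisfies $|\dot\rho|\le|\dot\beta|$ and runs from $0$ to $|\zeta|$ with $\zeta:=\mu_\lambda(p)$, the monotonicity of $\psi_\lambda$ gives
\[
\int_0^1\sqrt{\Phi_\lambda(\beta(t))}\,|\dot\beta(t)|\,dt\;\ge\;\int_0^1\sqrt{\psi_\lambda(\rho(t))}\,|\dot\rho(t)|\,dt\;\ge\;\int_0^{|\zeta|}\sqrt{\psi_\lambda(\rho)}\,d\rho\;\ge\;|\zeta|\,\sqrt{\psi_\lambda(|\zeta|)}.
\]
Squaring and using $|\zeta|^2\psi_\lambda(|\zeta|)=\frac14\sum_{n\in\mathbb{Z}}\frac{|\zeta|^2}{|\zeta|+|\lambda_n|}=\frac14|\zeta|\,\varphi_\lambda(|\zeta|)$ yields $d_{g_\lambda}(p_0,p)^2\ge \frac14\,|\mu_\lambda(p)|\,\varphi_\lambda(|\mu_\lambda(p)|)$, which in particular is the asserted inequality with $Q_-=1/8$.

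Finally I would dispose of two routine points. If $\lambda$ is not generic, $X_\lambda$ still carries a well-defined Riemannian measure and distance (as remarked at the beginning of this section), and the argument above is unaffected on the smooth locus. If $p\in X_\lambda\setminus X_\lambda^*$, so that $\mu_\lambda(p)=-\lambda_n$ for some $n$ and the $S^1$-orbit through $p$ collapses, I would either keep $\gamma$ inside $X_\lambda^*$ away from its endpoint as above, or choose $p'\in X_\lambda^*$ with $p'\to p$ and pass to the limit, using continuity of $d_{g_\lambda}(p_0,\cdot)$ and of $\mu_\lambda$ together with continuity of $R\mapsto R\,\varphi_\lambda(R)$ on $[0,\infty)$. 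The step needing the most care is the first one: reading off the precise Gibbons--Hawking expression for $g_\lambda$ — in particular the constant multiplying the horizontal block — from Theorem 2.9 and the normalizations of $\mu_\lambda$, $\Phi_\lambda$ and $A_\lambda$; once that is settled, the rest is a one-variable calculus estimate.
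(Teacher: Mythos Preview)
Your argument is correct and in fact yields the stronger constant $Q_-=\tfrac14$; it is, however, a genuinely different proof from the paper's. The paper works upstairs in the infinite-dimensional quotient: since $\hat{\mu}_\Lambda^{-1}(0)\to X_\lambda$ is a Riemannian submersion with isometric $G_\Lambda$-action, one has $d_{g_\lambda}(p_0,p)\ge\inf_{\sigma\in G_\Lambda}\|\Lambda-x\sigma\|_M$, and this Hilbert-space minimum is computed explicitly term by term to obtain the closed expression
\[
\sum_{n\in\mathbb{Z}}\frac{|\zeta|^2}{|\lambda_n|+|\zeta+\lambda_n|+\sqrt{2}\sqrt{|\lambda_n||\zeta+\lambda_n|+\langle\lambda_n,\zeta+\lambda_n\rangle_{\mathbb{R}^3}}},
\]
from which the bound $\tfrac18|\zeta|\varphi_\lambda(|\zeta|)$ is read off. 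You instead stay downstairs: project curves by $\mu_\lambda$, use the horizontal block $\Phi_\lambda\,g_{\mathbb{R}^3}$ of the Gibbons--Hawking metric, and reduce to a one-variable radial estimate in $\mathrm{Im}\,\mathbb{H}$. Your route is more elementary---it avoids the Hilbert-space calculation entirely---and gives a sharper constant; the paper's route produces an exact formula for the orbit distance, which is of independent interest and is the template reused later for the Taub-NUT deformation in Lemma~7.1. Two minor remarks: your auxiliary radial function collides with the paper's symbol $\psi_\lambda$ from Lemma~3.2, so rename it; and the step $\int_0^1\!\sqrt{\psi_\lambda(\rho(t))}\,|\dot\rho(t)|\,dt\ge\int_0^{|\zeta|}\!\sqrt{\psi_\lambda(\rho)}\,d\rho$ deserves one line of justification (write $F(s)=\int_0^s\sqrt{\psi_\lambda}$ and note $F(|\zeta|)-F(0)=\int_0^1\sqrt{\psi_\lambda(\rho(t))}\,\dot\rho(t)\,dt\le\int_0^1\sqrt{\psi_\lambda(\rho(t))}\,|\dot\rho(t)|\,dt$, the integrability of $\sqrt{\psi_\lambda}$ near $0$ coming from $\lambda_0=0$).
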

\begin{proof}
Take $\Lambda = (\Lambda_n)_{n\in\mathbb{Z}}\in\mathbb{H}_0^\mathbb{Z}$ to be $\Lambda_n i \bar{\Lambda}_n = \lambda_n$ for all $n\in\mathbb{Z}$. We fix $x\in\hat{\mu}_{\Lambda}^{-1}(0)$ such that $[x] = p$. If we regard $\hat{\mu}_{\Lambda}^{-1}(0)$ as the infinite dimensional Riemannian submanifold of $M_{\Lambda}$, then the quotient map $\hat{\mu}_{\Lambda}^{-1}(0)\to X_{\lambda} = \hat{\mu}_{\Lambda}^{-1}(0)/G_{\Lambda}$ is a Riemannian submersion and $G_{\Lambda}$ acts on $\hat{\mu}_{\Lambda}^{-1}(0)$ as an isometry. Then the horizontal lift of the geodesic from $p_0$ to $p$ has the same length as $d_{g_{\lambda}}(p_0,p)$. Since the Riemannian distance between $\Lambda$ and $x'\in\hat{\mu}_{\Lambda}^{-1}(0)$ is larger than $\|\Lambda - x'\|_{M}$, then we have
\begin{eqnarray}
d_{g_{\lambda}}(p_0,p) \ge \inf_{\sigma\in G_{\Lambda}}\|\Lambda - x\sigma\|_{M}.\nonumber
\end{eqnarray}
If we put $\Lambda=(\alpha_n + \beta_nj)_{n\in\mathbb{Z}}$, $x=(z_n + w_nj)_{n\in\mathbb{Z}}$ and $\sigma = (e^{i\theta_n})_{n\in\mathbb{Z}}$, then
\begin{eqnarray}
\|\Lambda - x\sigma\|^2_{M} = \sum_{n\in\mathbb{Z}}\big(|\alpha_n - z_n e^{i\theta_n}|^2 + |\beta_n - w_n e^{-i\theta_n}|^2\big).\nonumber
\end{eqnarray}
If the function $f_n(t):=|\alpha_n - z_n e^{it}|^2 + |\beta_n - w_n e^{-it}|^2$ attains its minimum at $\theta_n$ for each $n\in\mathbb{Z}$, then $\sigma = (e^{i\theta_n})_{n\in\mathbb{Z}}$ satisfies$\|\Lambda - x\sigma\|_{M} \le \inf_{\tau\in G_{\Lambda}}\|\Lambda - x\tau\|_{M}$.\\
\quad From the equations
\begin{eqnarray}
\lambda_n = \Lambda_n i \overline{\Lambda}_n = i(|\alpha_n|^2 - |\beta_n|^2) - 2\alpha_n\beta_n k,\nonumber\\
\zeta + \lambda_n = x_n i \overline{x}_n = i(|z_n|^2 - |w_n|^2) - 2z_n w_n k,\nonumber
\end{eqnarray}
we have
\begin{eqnarray}
f_n(t) = |\lambda_n| + |\zeta + \lambda_n| - 2Re\{(\alpha_n\bar{z}_n + \bar{\beta}_nw_n)e^{-it}\},\nonumber\\
|\alpha_n\bar{z}_n + \bar{\beta}_nw_n|^2 = \frac{1}{2}\big(|\lambda_n||\zeta + \lambda_n| + \langle \lambda_n,\zeta + \lambda_n\rangle_{\mathbb{R}^3}\big),\nonumber
\end{eqnarray}
where $\langle ,\rangle_{\mathbb{R}^3}$ is the standard inner product on ${\rm Im}\mathbb{H} = \mathbb{R}^3$.\\
\quad Suppose $n\neq 0$. If $\alpha_n\bar{z}_n + \bar{\beta}_nw_n\neq 0$, then we put $e^{i\theta_n}:=\frac{\alpha_n\bar{z}_n + \bar{\beta}_nw_n}{|\alpha_n\bar{z}_n + \bar{\beta}_nw_n|}$. If $\alpha_n\bar{z}_n + \bar{\beta}_nw_n = 0$, we may put $\theta_n:=0$ since $f_n(t)$ is constant.\\
\quad Let $S:=\{n\in\mathbb{Z};\ \alpha_n\bar{z}_n + \bar{\beta}_nw_n = 0\}$. For each $n\notin S$, we have
\begin{eqnarray}
 |\alpha_n - z_n e^{i\theta_n}|^2 + |\beta_n - w_n e^{-i\theta_n}|^2 &=& \frac{|\zeta|^2}{|\lambda_n| + |\zeta + \lambda_n| + 2|\alpha_n\bar{z}_n + \bar{\beta}_nw_n|} \nonumber\\
&\le& \frac{|\zeta|^2}{|\lambda_n|}.\nonumber
\end{eqnarray}
Hence we deduce
\begin{eqnarray}
\sum_{n\in\mathbb{Z}}|\Lambda_n - x_n e^{i\theta_n}|^2 \le \sum_{n\in S}|\Lambda_n - x_n|^2 + \sum_{n\notin S}\frac{|\zeta|^2}{|\lambda_n|} < +\infty.\nonumber
\end{eqnarray}
Thus we obtain $\sum_{n\in\mathbb{Z}\backslash\{0\}}|\Lambda_n|^2|1 - e^{i\theta_n}|^2 < +\infty$, which ensures the convergence of $\Pi_{n\in\mathbb{Z}\backslash\{0\}}e^{i\theta_n}$.\\
\quad It follows from $\lambda_0 = 0$ that $\alpha_0 = \beta_0 = 0$. Then $f_0(t)$ is constant. Since $f_0(t)$ attains its minimum at any $\theta_0$, then we may put $e^{i\theta_0}:=\Pi_{n\in\mathbb{Z}\backslash\{0\}}e^{-i\theta_n}$. Thus the function $\|\Lambda - x\tau\|_M$ attains its minimum at $\tau = \sigma = (e^{i\theta_n})_{n\in\mathbb{Z}}\in G_{\Lambda}$. For this $\sigma$, we have
\begin{eqnarray}
\|\Lambda - x\sigma\|^2_M &=& \sum_{n\in\mathbb{Z}}(|\lambda_n| + |\zeta + \lambda_n| - \sqrt{2}\sqrt{|\lambda_n||\zeta + \lambda_n| + \langle \lambda_n,\zeta + \lambda_n\rangle_{\mathbb{R}^3}})\nonumber\\
&=& \sum_{n\in\mathbb{Z}}\frac{|\zeta|^2}{|\lambda_n| + |\zeta + \lambda_n| + \sqrt{2}\sqrt{|\lambda_n||\zeta + \lambda_n| + \langle \lambda_n,\zeta + \lambda_n\rangle_{\mathbb{R}^3}}}\nonumber\\
&\ge&  \sum_{n\in\mathbb{Z}}\frac{|\zeta|^2}{8|\lambda_n| + 2|\zeta|} \ge \frac{1}{8}|\zeta|\cdot\varphi_{\lambda}(|\zeta|).\nonumber
\end{eqnarray}
Then the assertion is obtained by putting $Q_-=\frac{1}{8}$.
\end{proof}
Put $B_R:=\{\zeta\in {\rm Im}\mathbb{H};\ |\zeta|<R\}$. Next we discuss the upper bound for the volume of $\mu_{\lambda}^{-1}(B_R)$.
\begin{lem}
We define two functions $N_{\lambda}(R)$ and $\psi_{\lambda}(R)$ by
\begin{eqnarray}
N_{\lambda}(R) &:=& \{n\in\mathbb{Z};\ |\lambda_n|\le R\},\nonumber\\
\psi_{\lambda}(R) &:=& \sharp N_{\lambda}(R) + \sum_{n\notin N_{\lambda}(R)}\frac{1}{|\lambda_n|}R\nonumber
\end{eqnarray}
for $R\ge 0$. Then we have
\begin{eqnarray}
\varphi_{\lambda}(R)\le\psi_{\lambda}(R)\le 2\varphi_{\lambda}(R).\nonumber
\end{eqnarray}
\end{lem}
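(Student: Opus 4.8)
The plan is an elementary term-by-term comparison of the two series. Write $\varphi_\lambda(R)=\sum_{n\in\mathbb{Z}}c_n(R)$ with $c_n(R):=\frac{R}{R+|\lambda_n|}$, and $\psi_\lambda(R)=\sum_{n\in\mathbb{Z}}a_n(R)$ with $a_n(R):=1$ when $n\in N_\lambda(R)$ and $a_n(R):=\frac{R}{|\lambda_n|}$ when $n\notin N_\lambda(R)$. All summands are nonnegative, so it is enough to prove, for every fixed $n$ and every $R>0$, the double inequality
\[
c_n(R)\ \le\ a_n(R)\ \le\ 2\,c_n(R),
\]
and then sum over $n\in\mathbb{Z}$; the value $R=0$ follows by letting $R\to+0$ (or directly, both sides then equalling $\sharp\{n:\lambda_n=0\}$, which is finite).

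First I would record that $N_\lambda(R)$ is a finite set for each $R\ge 0$: the condition $\sum_{n}(1+|\lambda_n|)^{-1}<+\infty$ built into the definition of $({\rm Im}\mathbb{H})_0^{\mathbb{Z}}$ forces $|\lambda_n|\to+\infty$ as $|n|\to\infty$, so only finitely many indices satisfy $|\lambda_n|\le R$. Hence $\sharp N_\lambda(R)$ makes sense, and the same summability shows that $\varphi_\lambda(R)$ and $\psi_\lambda(R)$ converge simultaneously, so a termwise estimate transfers to the sums.

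For $n\in N_\lambda(R)$ we have $0\le|\lambda_n|\le R$, whence $R\le R+|\lambda_n|\le 2R$; dividing gives $\tfrac12\le c_n(R)\le 1$. Since $a_n(R)=1$ this yields $c_n(R)\le a_n(R)$ and $a_n(R)=1\le 2c_n(R)$. For $n\notin N_\lambda(R)$ we have $|\lambda_n|>R>0$, whence $|\lambda_n|\le R+|\lambda_n|\le 2|\lambda_n|$; dividing $R$ by these gives $\frac{R}{2|\lambda_n|}\le c_n(R)\le\frac{R}{|\lambda_n|}$. Since $a_n(R)=\frac{R}{|\lambda_n|}$ this yields $c_n(R)\le a_n(R)$ and $a_n(R)=\frac{R}{|\lambda_n|}\le 2c_n(R)$. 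In both cases the required double inequality holds, and summing over $n\in\mathbb{Z}$ completes the proof.

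There is essentially no obstacle here: the whole argument is the one-line estimate $R+|\lambda_n|\in\bigl[\max(R,|\lambda_n|),\,2\max(R,|\lambda_n|)\bigr]$ applied separately on the two pieces of the index set, plus the bookkeeping remark that everything in sight is a convergent sum of nonnegative terms. The only point needing a (trivial) justification is the finiteness of $N_\lambda(R)$, which is immediate from the defining summability condition on $\lambda$.
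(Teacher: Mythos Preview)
Your proof is correct and is essentially the same argument as the paper's. The paper packages the termwise comparison by introducing auxiliary functions $p(x)=\frac{x}{1+x}$ and $q(x)=\min\{1,x\}$, notes $p(x)\le q(x)\le 2p(x)$ for $x\ge 0$, and then observes $\varphi_\lambda(R)=\sum_n p(R/|\lambda_n|)$ and $\psi_\lambda(R)=\sum_n q(R/|\lambda_n|)$; your case split on $n\in N_\lambda(R)$ versus $n\notin N_\lambda(R)$ is exactly the case split $x\ge 1$ versus $x<1$ in verifying $p\le q\le 2p$, just written out without the substitution.
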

\begin{proof}
We define two functions $p(x)$ and $q(x)$ by
\begin{eqnarray}
p(x):=\frac{x}{1+x},\quad q(x):=\min\{1,x\}\nonumber
\end{eqnarray}
for $x\ge 0$. Then the inequalities
\begin{eqnarray}
p(x)\le q(x)\le 2p(x)\nonumber
\end{eqnarray}
hold for each $x\ge 0$. Therefore the assertion follows from
\begin{eqnarray}
\varphi_{\lambda}(R) = \sum_{n\in\mathbb{Z}}p\bigg(\frac{R}{|\lambda_n|}\bigg),\quad\psi_{\lambda}(R) = \sum_{n\in\mathbb{Z}}q\bigg(\frac{R}{|\lambda_n|}\bigg).\nonumber
\end{eqnarray}
\end{proof}
\begin{lem}
For $\alpha\ge 1$ and $R\ge 0$, we have $\varphi_{\lambda}(\alpha R)\le\alpha\varphi_{\lambda}(R)$ and $\psi_{\lambda}(\alpha R)\le\alpha\psi_{\lambda}(R)$.
\end{lem}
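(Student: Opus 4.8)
The plan is to deduce both inequalities from a single elementary fact about the auxiliary functions $p(x)=x/(1+x)$ and $q(x)=\min\{1,x\}$ used in the proof of Lemma 3.2: if $h\colon\mathbb{R}_{\ge 0}\to\mathbb{R}_{\ge 0}$ has $h(0)=0$ and $x\mapsto h(x)/x$ is non-increasing on $(0,\infty)$, then
\begin{eqnarray}
h(\alpha x)\le\alpha\, h(x)\qquad(x\ge 0,\ \alpha\ge 1).\nonumber
\end{eqnarray}
For $x>0$ this is just $h(\alpha x)/(\alpha x)\le h(x)/x$ (since $\alpha x\ge x$), and for $x=0$ it is trivial. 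Both $p$ and $q$ satisfy the hypothesis, because $p(x)/x=1/(1+x)$ and $q(x)/x=\min\{1,1/x\}$ are non-increasing and $p(0)=q(0)=0$.

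Next I would invoke the identities $\varphi_{\lambda}(R)=\sum_{n\in\mathbb{Z}}p(R/|\lambda_n|)$ and $\psi_{\lambda}(R)=\sum_{n\in\mathbb{Z}}q(R/|\lambda_n|)$ from Lemma 3.2 — with the convention $R/|\lambda_n|=+\infty$ and $p(+\infty)=q(+\infty)=1$ when $\lambda_n=0$, which is consistent with the definition $\varphi_{\lambda}(0)=\lim_{R\to+0}\varphi_{\lambda}(R)$. Applying the displayed inequality termwise with $h=p$ and with $h=q$ gives $p(\alpha R/|\lambda_n|)\le\alpha\,p(R/|\lambda_n|)$ and $q(\alpha R/|\lambda_n|)\le\alpha\,q(R/|\lambda_n|)$ for every $n$, and summing over $n\in\mathbb{Z}$ yields $\varphi_{\lambda}(\alpha R)\le\alpha\varphi_{\lambda}(R)$ and $\psi_{\lambda}(\alpha R)\le\alpha\psi_{\lambda}(R)$. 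Since each term is multiplied by the constant $\alpha$, convergence is preserved, so the argument is valid whether or not the series are finite. Alternatively, without Lemma 3.2, one checks $\frac{\alpha R}{\alpha R+|\lambda_n|}\le\frac{\alpha R}{R+|\lambda_n|}=\alpha\cdot\frac{R}{R+|\lambda_n|}$ directly (using $\alpha\ge 1$) and sums for $\varphi_{\lambda}$; for $\psi_{\lambda}$ one splits $N_{\lambda}(\alpha R)$ into $N_{\lambda}(R)$ and $N_{\lambda}(\alpha R)\setminus N_{\lambda}(R)$ and compares contributions index by index, the only nontrivial case being $n$ with $R<|\lambda_n|\le\alpha R$, where $1\le\alpha R/|\lambda_n|$ holds precisely because $|\lambda_n|\le\alpha R$.

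There is essentially no serious obstacle here; this is a routine scaling estimate. The only point deserving a moment of attention is the bookkeeping at indices with $\lambda_n=0$ (equivalently, consistency with $\varphi_{\lambda}(0)=\lim_{R\to+0}\varphi_{\lambda}(R)$), and the observation that the termwise bounds survive summation regardless of the finiteness of $\varphi_{\lambda}$ and $\psi_{\lambda}$.
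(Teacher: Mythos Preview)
Your proof is correct and rests on the same observation as the paper: if $h(x)/x$ is non-increasing, then $h(\alpha x)\le\alpha\,h(x)$ for $\alpha\ge 1$. The paper applies this directly to the whole sums, noting that $\varphi_{\lambda}(R)/R=\sum_{n}\frac{1}{R+|\lambda_n|}$ (and likewise $\psi_{\lambda}(R)/R$) is decreasing in $R$, whereas you apply the same principle termwise via $p$ and $q$ and then sum; this is a difference in presentation, not in substance.
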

\begin{proof}
Since $\frac{\varphi_{\lambda}(R)}{R}$ is strictly decreasing for $R$, we have
\begin{eqnarray}
\varphi_{\lambda}(\alpha R) = \frac{\varphi_{\lambda}(\alpha R)}{\alpha R}\alpha R \le \frac{\varphi_{\lambda}(R)}{R}\alpha R = \alpha\varphi_{\lambda}(R)\nonumber
\end{eqnarray}
for $\alpha\ge 1$ and $R\ge 0$. It follows from the same argument that $\psi_{\lambda}(\alpha R)\le\alpha\psi_{\lambda}(R)$.
\end{proof}
\begin{prop}
There is a constant $P_+>0$ independent of $\lambda$ and $R$ such that
\begin{eqnarray}
{\rm vol}_{g_{\lambda}}(\mu_{\lambda}^{-1}(B_R))\le P_+ R^2\varphi_{\lambda}(R)\nonumber
\end{eqnarray}
for all $R\ge 0$.
\end{prop}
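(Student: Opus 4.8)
The plan is to push the volume computation down to the base $\mathbb{R}^3$ through the Gibbons--Hawking potential $\Phi_{\lambda}$ and then to estimate a single Newtonian integral, splitting according to whether $|\lambda_n|$ is small or large compared with $R$. First I would apply the volume formula of Lemma 2.10 on the open dense set $X_{\lambda}^*$, where the $S^1$-action is free (the set $X_{\lambda}\setminus X_{\lambda}^*$ is a discrete set of points, hence negligible for the Riemannian measure). Combined with the known expression $\Phi_{\lambda}(\zeta)=\frac14\sum_{n\in\mathbb{Z}}|\zeta+\lambda_n|^{-1}$ and the substitution $\eta=\zeta+\lambda_n$, this gives
\begin{eqnarray}
{\rm vol}_{g_{\lambda}}(\mu_{\lambda}^{-1}(B_R))&=&\frac{\pi}{4}\sum_{n\in\mathbb{Z}}\int_{B_R}\frac{d\zeta_1d\zeta_2d\zeta_3}{|\zeta+\lambda_n|}=\frac{\pi}{4}\sum_{n\in\mathbb{Z}}I_n(R),\nonumber
\end{eqnarray}
where $I_n(R):=\int_{B(\lambda_n,R)}|\eta|^{-1}d\eta$ and $B(\lambda_n,R)\subset\mathbb{R}^3$ is the Euclidean ball of radius $R$ centred at $\lambda_n$. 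The task is thereby reduced to bounding $I_n(R)$ by a fixed multiple of $R^2$ when $|\lambda_n|\le R$ and by a fixed multiple of $R^3/|\lambda_n|$ when $|\lambda_n|>R$.

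For the first regime I would use that $|\eta|^{-1}$ is radially nonincreasing on $\mathbb{R}^3$: by a layer-cake argument, using $|B(0,a)\cap B(y,b)|\le|B(0,\min\{a,b\})|$, the integral $I_n(R)$ over a ball of radius $R$ is largest for the ball centred at the origin, so $I_n(R)\le\int_{B(0,R)}|\eta|^{-1}d\eta=2\pi R^2$ for every $n\in\mathbb{Z}$ (if one prefers an even more elementary route, for $|\lambda_n|\le R$ one has $B(\lambda_n,R)\subset B(0,2R)$, giving $I_n(R)\le 8\pi R^2$). For the second regime, when $|\lambda_n|>R$ I would instead use $|\eta|\ge|\lambda_n|-R$ on $B(\lambda_n,R)$ and multiply by the volume $\frac{4}{3}\pi R^3$: if $|\lambda_n|\ge 2R$ this gives $I_n(R)\le\frac{4\pi R^3}{3(|\lambda_n|-R)}\le\frac{8\pi R^3}{3|\lambda_n|}$, while for $R<|\lambda_n|<2R$ the bound $I_n(R)\le 2\pi R^2\le 4\pi R^3/|\lambda_n|$ already obtained suffices. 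Altogether, $I_n(R)\le 2\pi R^2$ for $n\in N_{\lambda}(R)$ and $I_n(R)\le 4\pi R^3/|\lambda_n|$ for $n\notin N_{\lambda}(R)$, with $N_{\lambda}$ as in Lemma 3.2.

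Summing over $n\in\mathbb{Z}$ and recalling the definition of $\psi_{\lambda}$ from Lemma 3.2, I would obtain
\begin{eqnarray}
{\rm vol}_{g_{\lambda}}(\mu_{\lambda}^{-1}(B_R))&\le&\frac{\pi}{4}\Big(2\pi R^2\cdot\sharp N_{\lambda}(R)+4\pi R^3\sum_{n\notin N_{\lambda}(R)}\frac{1}{|\lambda_n|}\Big)\le\pi^2 R^2\psi_{\lambda}(R),\nonumber
\end{eqnarray}
and then $\psi_{\lambda}(R)\le 2\varphi_{\lambda}(R)$ from Lemma 3.2 yields the assertion with $P_+=2\pi^2$; the case $R=0$ is trivial. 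I do not expect a genuine obstacle here: the only delicate point is ensuring that every constant in the estimate of $I_n(R)$ is independent of both $\lambda$ and $R$, which is precisely why the two-regime split — with separate attention to the transitional range $R<|\lambda_n|<2R$ — is used rather than a single crude global bound.
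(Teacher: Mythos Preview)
Your argument is correct and follows the same overall architecture as the paper: push the volume to the base via Lemma~2.10, split the sum according to $N_{\lambda}(R)$, bound each Newtonian integral separately, and finish with $\psi_{\lambda}\le 2\varphi_{\lambda}$ from Lemma~3.2. The difference lies in how the individual integrals are handled. For $n\notin N_{\lambda}(R)$ the paper observes that $|\zeta+\lambda_n|^{-1}$ is harmonic on $B_R$ and invokes the mean value property to get the exact value $\tfrac{4\pi R^3}{3|\lambda_n|}$ in one line, whereas you replace this by the elementary pointwise bound $|\eta|\ge|\lambda_n|-R$ together with a separate treatment of the transitional range $R<|\lambda_n|<2R$. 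For $n\in N_{\lambda}(R)$ the paper uses the containment $B(\lambda_n,R)\subset B(0,2R)$ to obtain $8\pi R^2$, while your symmetrization (layer-cake) argument yields the sharper $2\pi R^2$; this is why you end up with $P_+=2\pi^2$ instead of the paper's $4\pi^2$. Your route is a bit more hands-on but entirely self-contained; the paper's use of the mean value property is cleaner and avoids the extra case split.
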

\begin{proof}
From Lemma 2.10, the volume of $\mu_{\lambda}^{-1}(B_R)$ is given by
\begin{eqnarray}
{\rm vol}_{g_{\lambda}}(\mu_{\lambda}^{-1}(B_R)) &=& \pi\int_{\zeta\in B_R}\Phi_{\lambda}(\zeta)d\zeta_1d\zeta_2d\zeta_3\nonumber\\
&=& \frac{\pi}{4}\sum_{n\in\mathbb{Z}}\int_{\zeta\in B_R}\frac{1}{|\zeta + \lambda_n|}d\zeta_1d\zeta_2d\zeta_3.\nonumber
\end{eqnarray}
Let $(r\ge 0,\Theta)$ be the polar coordinate over ${\rm Im}\mathbb{H} = \mathbb{R}^3$, where $\Theta$ is a coordinate on $S^2=\partial B_1$. If $n\in N_{\lambda}(R)$, then the change of variables $\zeta' = \zeta + \lambda_n$ gives
\begin{eqnarray}
\int_{\zeta\in B_R}\frac{1}{|\zeta + \lambda_n|}d\zeta_1d\zeta_2d\zeta_3 &\le& \int_{\zeta'\in B_{R + |\lambda_n|}}\frac{1}{|\zeta'|}d\zeta'_1d\zeta'_2d\zeta'_3\nonumber\\
&=& 4\pi\int_0^{R + |\lambda_n|}rdr\nonumber\\
&=& 2\pi(R + |\lambda_n|)^2\le 8\pi R^2.\nonumber
\end{eqnarray}
If $n\notin N_{\lambda}(R)$, the mean value property of harmonic functions gives
\begin{eqnarray}
\int_{\zeta\in B_R}\frac{1}{|\zeta + \lambda_n|}d\zeta_1d\zeta_2d\zeta_3 &=& \frac{4\pi R^3}{3}\frac{1}{|\lambda_n|},\nonumber
\end{eqnarray}
since $\frac{1}{|\zeta + \lambda_n|}$ is harmonic on $B_R$. Hence the upper bound for ${\rm vol}_{g_{\lambda}}(\mu_{\lambda}^{-1}(B_R))$ is given by
\begin{eqnarray}
{\rm vol}_{g_{\lambda}}(\mu_{\lambda}^{-1}(B_R)) &\le& 2\pi^2\sharp N_{\lambda}(R)\cdot R^2 + \frac{\pi^2}{3}\sum_{n\notin N_{\lambda}(R)}\frac{R}{|\lambda_n|}\cdot R^2\nonumber\\
&\le& 2\pi^2\psi_{\lambda}(R) R^2 \le 4\pi^2\varphi_{\lambda}(R) R^2.\nonumber
\end{eqnarray}
Then we have the assertion by putting $P_+ := 4\pi^2$.
\end{proof}
Let $\theta_{\lambda,C}(R):=C\varphi_{\lambda}(R) R^2$, $\tau_{\lambda,C}(R):=C\varphi_{\lambda}(R) R$ for $C>0$, and $\tau_{\lambda,C}^{-1}:\mathbb{R}_{\ge 0}\to\mathbb{R}_{\ge 0}$ be the inverse function of $\tau_{\lambda,C}$.
\begin{prop}
Let $P_+$ and $Q_-$ be as in Proposition 3.1 and 3.4. Then the inequality
\begin{eqnarray}
V_{g_{\lambda}}(p_0,r)\le\theta_{\lambda,P_+}\circ\tau_{\lambda,Q_-}^{-1}(r^2)\nonumber
\end{eqnarray}
holds for all $r\ge 0$.
\end{prop}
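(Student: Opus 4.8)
The plan is to combine the two inequalities from Propositions 3.1 and 3.4 purely formally. The key observation is that $B_{g_\lambda}(p_0,r)$ is contained in the preimage $\mu_\lambda^{-1}(B_R)$ for a suitable radius $R$ depending on $r$, after which Proposition 3.4 gives an upper bound for the volume. So the first step is to determine which $R$ works: by Proposition 3.1, any $p$ with $d_{g_\lambda}(p_0,p) < r$ satisfies $Q_-\,|\mu_\lambda(p)|\cdot\varphi_\lambda(|\mu_\lambda(p)|) \le d_{g_\lambda}(p_0,p)^2 < r^2$, i.e.\ $\tau_{\lambda,Q_-}(|\mu_\lambda(p)|) < r^2$. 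Since $\tau_{\lambda,Q_-}(R) = Q_-\varphi_\lambda(R)R$ is a continuous, strictly increasing function of $R$ (as $\varphi_\lambda$ is increasing and so is $R$, with $\tau_{\lambda,Q_-}(0)=0$ and $\tau_{\lambda,Q_-}(R)\to+\infty$), it has a well-defined continuous increasing inverse $\tau_{\lambda,Q_-}^{-1}$, and the inequality $\tau_{\lambda,Q_-}(|\mu_\lambda(p)|) < r^2$ is equivalent to $|\mu_\lambda(p)| < \tau_{\lambda,Q_-}^{-1}(r^2)$.

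Thus, setting $R := \tau_{\lambda,Q_-}^{-1}(r^2)$, I would argue that $B_{g_\lambda}(p_0,r) \subset \mu_\lambda^{-1}(B_R)$: every $p$ in the left-hand ball has $|\mu_\lambda(p)| < R$ by the previous step, hence lies in the right-hand set. (A small technical point: I should note that $\mu_\lambda$ restricted to the points with nontrivial stabilizer is still covered by Lemma 2.10, or simply that the non-free locus has measure zero, so the volume formula and these containments are unaffected; alternatively, since $V_{g_\lambda}$ depends only on ${\rm vol}_{g_\lambda}$ and $d_{g_\lambda}$, the containment of sets suffices.) Monotonicity of the Riemannian measure then yields
\begin{eqnarray}
V_{g_\lambda}(p_0,r) = {\rm vol}_{g_\lambda}(B_{g_\lambda}(p_0,r)) \le {\rm vol}_{g_\lambda}(\mu_\lambda^{-1}(B_R)) \le P_+ R^2 \varphi_\lambda(R),\nonumber
\end{eqnarray}
where the last inequality is Proposition 3.4.

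Finally, I would rewrite the right-hand side in terms of the named functions: by definition $P_+ R^2\varphi_\lambda(R) = \theta_{\lambda,P_+}(R)$, and $R = \tau_{\lambda,Q_-}^{-1}(r^2)$, so the bound reads $V_{g_\lambda}(p_0,r) \le \theta_{\lambda,P_+}\circ\tau_{\lambda,Q_-}^{-1}(r^2)$, which is exactly the claim. There is essentially no obstacle here; the only point requiring a line of care is the well-definedness and monotonicity of $\tau_{\lambda,Q_-}^{-1}$, which follows from $\varphi_\lambda$ being continuous and nondecreasing (indeed each summand $R/(R+|\lambda_n|)$ is, and Lemma 3.3 controls growth) together with $\tau_{\lambda,Q_-}(0)=0$ and $\tau_{\lambda,Q_-}(R)\to\infty$ as $R\to\infty$. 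The rest is the set inclusion $B_{g_\lambda}(p_0,r)\subset\mu_\lambda^{-1}(B_R)$ driven by Proposition 3.1 and monotonicity of measure, so the proof is short.
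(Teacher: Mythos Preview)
Your proof is correct and follows essentially the same approach as the paper: both argue via Proposition~3.1 that $B_{g_\lambda}(p_0,r)\subset\mu_\lambda^{-1}(B_R)$ for $R=\tau_{\lambda,Q_-}^{-1}(r^2)$, then apply Proposition~3.4 and identify the bound as $\theta_{\lambda,P_+}(R)$. The only cosmetic difference is that the paper parametrizes by $R$ and substitutes $r=\sqrt{\tau_{\lambda,Q_-}(R)}$ at the end, whereas you begin with $r$ and define $R$ directly.
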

\begin{proof}
Since $p\in B_{g_{\lambda}}(p_0,\sqrt{\tau_{\lambda,Q_-}(R)})$ satisfies $d_{g_{\lambda}}(p_0,p)^2\le\tau_{\lambda,Q_-}(R)$, then from Proposition 3.1 we obtain\begin{eqnarray}
\tau_{\lambda,Q_-}(|\mu_{\lambda}(p)|) = Q_-|\mu_{\lambda}(p)|\varphi(|\mu_{\lambda}(p)|) \le \tau_{\lambda,Q_-}(R).\nonumber
\end{eqnarray}
Since the function $\tau_{\lambda,Q_-}(R)$ is strictly increasing, we have $|\mu_{\lambda}(p)|\le R$. Then we have $B_{g_{\lambda}}(p_0,\sqrt{\tau_{\lambda,Q_-}(R)})\subset\mu^{-1}_{\lambda}(B_R)$. Then it follows from Proposition 3.4 that we have
\begin{eqnarray}
{\rm vol}_{g_{\lambda}}(B_{g_{\lambda}}(p_0,\sqrt{\tau_{\lambda,Q_-}(R)})) \le {\rm vol}_{g_{\lambda}}(\mu_{\lambda}^{-1}(B_R)) \le P_+R^2\varphi(R) = \theta_{\lambda,P_+}(R).\nonumber
\end{eqnarray}
Set $r=\sqrt{\tau_{\lambda,Q_-}(R)}$. Then substituting for $R$ the inverse function $\tau_{\lambda,Q_-}^{-1}(r^2)$, we obtain the result.
\end{proof}

\section{The lower bound for the volume growth}
\quad In the previous section we obtained the upper bound for $V_{g_{\lambda}}(p_0,r)$. The purpose of this section is to obtain the inequality
\begin{eqnarray}
\theta_{\lambda,P_-}\circ\tau_{\lambda,Q_+}^{-1}(r^2) \le V_{g_{\lambda}}(p_0,r),\nonumber
\end{eqnarray}
where $P_-,Q_+ >0$ are constants independent of $\lambda$ and $r$. In similar consideration as in Section 3, it seems that the following two inequalities should be shown,
\begin{eqnarray}
{\rm vol}_{g_{\lambda}}(\mu_{\lambda}^{-1}(B_R)) &\ge& P_- R^2\varphi_{\lambda}(R),\\
d_{g_{\lambda}}(p_0,p)^2 &\le& Q_+|\mu_{\lambda}(p)|\cdot\varphi_{\lambda}(|\mu_{\lambda}(p)|).
\end{eqnarray}
But it seems to be hard to show the inequality $(2)$ for the author. Accordingly we shall show two modified inequalities; one is a stronger inequality than $(1)$, and the other is weaker than $(2)$. First of all we consider the former estimate.\\
\quad Let $U\subset S^2$ be a measurable set and put
\begin{eqnarray}
B_{R,U}:=\{t\zeta\in {\rm Im}\mathbb{H};\ 0\le t\le R,\ \zeta\in U\}.\nonumber
\end{eqnarray}
We denote by $m_{S^2}$ the measure induced from the Riemannian metric with constant curvature over $S^2$ whose total measure is given by $m_{S^2}(S^2) = 4\pi$. First of all we consider the lower bound for ${\rm vol}_{g_{\lambda}}(\mu_{\lambda}^{-1}(B_{R,U}))$.
\begin{prop}
There is a constant $C_->0$ independent of $\lambda$, $R$ and $U\subset S^2$, which satisfies
\begin{eqnarray}
{\rm vol}_{g_{\lambda}}(\mu_{\lambda}^{-1}(B_{R,U}))\ge C_-m_{S^2}(U)R^2\cdot\varphi_{\lambda}(R).\nonumber
\end{eqnarray}
\end{prop}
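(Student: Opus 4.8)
The plan is to reduce the volume computation to an estimate of the Euclidean integral $\int_{B_{R,U}}\Phi_\lambda$ via Lemma 2.10, which gives
\begin{eqnarray}
{\rm vol}_{g_\lambda}(\mu_\lambda^{-1}(B_{R,U})) = \pi\int_{B_{R,U}}\Phi_\lambda(\zeta)\,d\zeta_1 d\zeta_2 d\zeta_3 = \frac{\pi}{4}\sum_{n\in\mathbb{Z}}\int_{B_{R,U}}\frac{d\zeta}{|\zeta+\lambda_n|}.\nonumber
\end{eqnarray}
So the task becomes: for each $n$, bound $\int_{B_{R,U}}|\zeta+\lambda_n|^{-1}\,d\zeta$ from below by a constant times $m_{S^2}(U)\,R^2\cdot p(R/|\lambda_n|)$, where $p(x)=x/(1+x)$ as in Lemma 3.2, since then summing over $n$ and using $\varphi_\lambda(R)=\sum_n p(R/|\lambda_n|)$ yields the claim. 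Equivalently, using Lemma 3.2's companion $q(x)=\min\{1,x\}$, it suffices to get the lower bound with $q(R/|\lambda_n|)$ in place of $p$, i.e. a bound of size $m_{S^2}(U)R^2$ when $|\lambda_n|\le R$ and of size $m_{S^2}(U)R^3/|\lambda_n|$ when $|\lambda_n|>R$.

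The two regimes are handled separately. For $|\lambda_n|> R$ (the "far" points), on $B_{R,U}$ we have $|\zeta+\lambda_n|\le |\zeta|+|\lambda_n|\le R+|\lambda_n|\le 2|\lambda_n|$, hence $|\zeta+\lambda_n|^{-1}\ge \tfrac{1}{2}|\lambda_n|^{-1}$, and integrating the constant over $B_{R,U}$, whose Euclidean volume is $\tfrac{1}{3}m_{S^2}(U)R^3$ (in polar coordinates $\int_0^R r^2\,dr\cdot m_{S^2}(U)$), gives $\int_{B_{R,U}}|\zeta+\lambda_n|^{-1}\,d\zeta\ge \tfrac{1}{6}m_{S^2}(U)R^3/|\lambda_n|$, exactly the far-regime target. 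For $|\lambda_n|\le R$ (the "near" points), I would restrict the integral to the smaller cone-sector-ball $B_{R/2,U}$ and there is no uniform pointwise lower bound on $|\zeta+\lambda_n|^{-1}$ because $-\lambda_n$ may lie inside $B_{R/2,U}$; however the singularity $|\zeta+\lambda_n|^{-1}$ is \emph{locally integrable} and the issue is only that the mass could concentrate near $-\lambda_n$, which does not hurt a lower bound. Concretely, shifting $\zeta'=\zeta+\lambda_n$, the region $B_{R,U}$ contains a translate of $B_{R/2,U'}$ for an appropriate rotated sector, or more simply: for any unit-vector set $U$ and any center, $B_{R,U}$ contains a Euclidean ball of radius $c\,m_{S^2}(U)^{1/2}R$ — no, that fails for thin $U$; instead I will keep the polar structure. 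Writing the integral in polar coordinates centred at $0$, $\int_{B_{R,U}}|\zeta+\lambda_n|^{-1}d\zeta=\int_U\!\int_0^R \frac{r^2}{|r\Theta+\lambda_n|}\,dr\,dm_{S^2}(\Theta)$, and since $|r\Theta+\lambda_n|\le r+|\lambda_n|\le 2R$ for $|\lambda_n|\le R$, the integrand is $\ge r^2/(2R)$, so the integral is $\ge m_{S^2}(U)\int_0^R r^2/(2R)\,dr=\tfrac{1}{6}m_{S^2}(U)R^2$, which is the near-regime target.

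Combining the two cases, $\int_{B_{R,U}}|\zeta+\lambda_n|^{-1}d\zeta\ge \tfrac{1}{6}m_{S^2}(U)R^2\,q(R/|\lambda_n|)\ge \tfrac{1}{12}m_{S^2}(U)R^2\,p(R/|\lambda_n|)$ by Lemma 3.2. Summing over $n\in\mathbb{Z}$ and multiplying by $\pi/4$ gives
\begin{eqnarray}
{\rm vol}_{g_\lambda}(\mu_\lambda^{-1}(B_{R,U}))\ge \frac{\pi}{48}\,m_{S^2}(U)R^2\sum_{n\in\mathbb{Z}}p\!\left(\frac{R}{|\lambda_n|}\right)=\frac{\pi}{48}\,m_{S^2}(U)R^2\varphi_\lambda(R),\nonumber
\end{eqnarray}
so the proposition holds with $C_-=\pi/48$ (the exact constant being irrelevant). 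The main point to be careful about — and the only genuine subtlety — is the near regime: one must avoid any attempt at a pointwise lower bound on $|\zeta+\lambda_n|^{-1}$ (which is false near $-\lambda_n$) and instead use that the bound $|\zeta+\lambda_n|\le 2R$ holds uniformly on $B_{R,U}$ when $|\lambda_n|\le R$, so the integrand dominates $r^2/(2R)$ there; the $|\lambda_0|=0$ term and any $\lambda_n$ with $-\lambda_n\in B_R$ are then handled with no extra effort since we are only bounding from below. One should also note $U$ need only be measurable, which is fine as all estimates are pointwise in $\Theta$ before integrating $dm_{S^2}$.
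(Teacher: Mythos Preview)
Your proof is correct and follows essentially the same route as the paper: apply Lemma~2.10, bound $|\zeta+\lambda_n|^{-1}\ge (|\zeta|+|\lambda_n|)^{-1}$ by the triangle inequality, pass to polar coordinates, and split into the cases $|\lambda_n|\le R$ and $|\lambda_n|>R$ to produce the $\psi_\lambda$-type sum, then invoke $\psi_\lambda\ge\varphi_\lambda$. Your estimates are in fact a bit cleaner than the paper's --- in the far regime you use $r+|\lambda_n|\le 2|\lambda_n|$ directly, whereas the paper evaluates $\int_0^R r^2/(r+|\lambda_n|)\,dr$ exactly and appeals to the Taylor expansion $\log(1+x)\ge x-\tfrac{x^2}{2}+\tfrac{x^3}{3}-\tfrac{x^4}{4}$; one small slip is that Lemma~3.2 gives $q\ge p$ directly, so your $\tfrac16 q$ already dominates $\tfrac16 p$ and the drop to $\tfrac{1}{12}p$ (hence $C_-=\pi/48$) is unnecessary.
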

\begin{proof}
From the triangle inequality, we have
\begin{eqnarray}
{\rm vol}_{g_{\lambda}}(\mu_{\lambda}^{-1}(B_{R,U})) &\ge& \frac{\pi}{4}\sum_{n\in\mathbb{Z}}\int_{\zeta\in B_{R,U}}\frac{1}{|\zeta| + |\lambda_n|}d\zeta_1d\zeta_2d\zeta_3\nonumber\\
&=& \frac{\pi}{4}\sum_{n\in\mathbb{Z}}\int_{r\in[0,R]}\int_{\Theta\in U}\frac{1}{r + |\lambda_n|}r^2drdm_{S^2}\nonumber\\
&=& \frac{\pi m_{S^2}(U)}{4}\sum_{n\in\mathbb{Z}}\int_{r\in[0,R]}\frac{1}{r + |\lambda_n|}r^2dr.\nonumber
\end{eqnarray}
If $n\in N_{\lambda}(R)$, then
\begin{eqnarray}
\int_{r\in[0,R]}\frac{1}{r + |\lambda_n|}r^2dr \ge \int_{r\in[0,R]}\frac{1}{r + R}r^2dr = (\log 2-\frac{1}{2})R^2.\nonumber
\end{eqnarray}
If $n\notin N_{\lambda}(R)$, then
\begin{eqnarray}
\int_{r\in[0,R]}\frac{1}{r + |\lambda_n|}r^2dr &=& \int_{r\in[0,R]}\frac{1}{r + |\lambda_n|}r^2dr\nonumber\\
&=& |\lambda_n|^2\bigg\{ -\frac{R}{|\lambda_n|} + \frac{1}{2}\bigg(\frac{R}{|\lambda_n|}\bigg)^2 + \log\bigg(1 + \frac{R}{|\lambda_n|}\bigg)\bigg\}.\nonumber
\end{eqnarray}
Since the inequality
\begin{eqnarray}
\log(1 + x)\ge x - \frac{1}{2}x^2 + \frac{1}{3}x^3 - \frac{1}{4}x^4\nonumber
\end{eqnarray}
holds for $x\ge 0$, then we have
\begin{eqnarray}
\int_{r\in[0,R]}\frac{1}{r + |\lambda_n|}r^2dr &\ge& |\lambda_n|^2\bigg\{ \frac{1}{3}\bigg(\frac{R}{|\lambda_n|}\bigg)^3 - \frac{1}{4}\bigg(\frac{R}{|\lambda_n|}\bigg)^4\bigg\}\nonumber\\
&\ge& \frac{1}{12}\frac{R^3}{|\lambda_n|}.\nonumber
\end{eqnarray}
By taking $\frac{4C_-}{\pi}:=\min\{\log 2-\frac{1}{2},\frac{1}{12}\} >0$, we have
\begin{eqnarray}
{\rm vol}_{g_{\lambda}}(\mu_{\lambda}^{-1}(B_{R,U})) \ge C_-m_{S^2}(U)R^2\cdot\psi_{\lambda}(R) \ge  C_-m_{S^2}(U)R^2\cdot\varphi_{\lambda}(R).\nonumber
\end{eqnarray}
\end{proof}
Next we need the upper bound for $d_{g_{\lambda}}(p_0,p)$. If we can calculate the length of a piecewise smooth path from $p_0$ to $p$, then the length is larger than $d_{g_{\lambda}}(p_0,p)$. Here we take the path as follows.\\
\quad Put $\zeta = \mu_{\lambda}(p)$. For $|\zeta|\le 1$, we define $\gamma_p$ to be the geodesic from $p_0$ to $p$. If $|\zeta| > 1$, we define a function $\delta:[1,|\zeta|]\to {\rm Im}\mathbb{H}$ by $\delta(t):=t\frac{\zeta}{|\zeta|}$ and take the horizontal lift $\tilde{\delta}:[1,|\zeta|]\to X_{\lambda}$ of $\delta$ with respect to the connection $A_{\lambda}$ such that $\tilde{\delta}(|\zeta|)=p$. If there exists a point $t\in [1,|\zeta|]$ such that $\delta(t)=-\lambda_n$ for some $n\in\mathbb{Z}$, then $\tilde{\delta}$ is not always smooth but continuous and piecewise smooth. Then we obtain a path $\gamma_p$ by connecting the geodesic from $p_0$ to $\tilde{\delta}(1)$ and $\tilde{\delta}$.\\
\quad The length of $\gamma_p$ is given by $d_{g_{\lambda}}(p_0,\tilde{\delta}(1)) + l_{\lambda}(\mu_{\lambda}(p))$ where $l_{\lambda}:{\rm Im}\mathbb{H}\to\mathbb{R}$ is defined by
\begin{eqnarray}
l_{\lambda}(\zeta):=\int_1^{|\zeta|}\sqrt{\Phi_{\lambda}\bigg(t\frac{\zeta}{|\zeta|}\bigg)}dt\nonumber
\end{eqnarray}
for $|\zeta|\ge 1$, and $l_{\lambda}(\zeta):=0$ for $|\zeta|\le 1$.\\
\quad If there exists a point $t\in [1,|\zeta|]$ such that $\delta(t)=-\lambda_n$, then $\sqrt{\Phi_{\lambda}\bigg(t\frac{\zeta}{|\zeta|}\bigg)}$ is not continuous at $t = |\lambda_n|$. But the integral $\int_1^{|\zeta|}\sqrt{\Phi_{\lambda}\bigg(t\frac{\zeta}{|\zeta|}\bigg)}dt$ is well-defined since the integral
\begin{eqnarray}
\int_{b-\varepsilon}^{b+\varepsilon}\sqrt{\frac{1}{a|t-b|} + h(t)}dt\nonumber
\end{eqnarray}
is a finite value for any constant $a>0$, $b\in\mathbb{R}$ and smooth function $h(t)$.\\
\quad Now we have $d_{g_{\lambda}}(p_0,p)\le L_{\lambda} + l_{\lambda}(\mu_{\lambda}(p))$, where
\begin{eqnarray}
L_{\lambda}:=\sup_{p\in\mu_{\lambda}^{-1}(\bar{B}_1)}d_{g_{\lambda}}(p_0,p) < +\infty.\nonumber
\end{eqnarray}
\begin{prop}
There is a constant $C_+>0$ independent of $\lambda$ and $R$ which satisfies
\begin{eqnarray}
\int_{\Theta\in \partial B_1}l_{\lambda}(R\Theta)dm_{S^2} \le 4\pi\sqrt{C_+R\varphi_{\lambda}(R)}.\nonumber
\end{eqnarray}
\end{prop}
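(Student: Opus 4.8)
The plan is to estimate the spherical integral of $l_\lambda(R\Theta)$ by bounding $l_\lambda(R\Theta)$ pointwise and then using the Cauchy--Schwarz inequality to convert the resulting integral of a square root into the square root of an integral, at which point the integrand becomes $\Phi_\lambda$, whose integral over $B_R$ we already control by the mean-value computations of Section~3. First I would unwind the definition: for $R\ge 1$,
\begin{eqnarray}
l_\lambda(R\Theta)=\int_1^R\sqrt{\Phi_\lambda(t\Theta)}\,dt\le\sqrt{R-1}\left(\int_1^R\Phi_\lambda(t\Theta)\,dt\right)^{1/2}\le\sqrt{R}\left(\int_0^R\Phi_\lambda(t\Theta)\,dt\right)^{1/2},\nonumber
\end{eqnarray}
using Cauchy--Schwarz on $[1,R]$ and positivity of $\Phi_\lambda$. (For $R\le 1$ the left side is $0$ and there is nothing to prove, so the bound we produce will only need checking for $R\ge 1$, and one absorbs the $R\le 1$ range into the constant if one wants a clean statement for all $R\ge 0$.)

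Next I would integrate over $\Theta\in\partial B_1=S^2$ and apply Cauchy--Schwarz once more, this time with respect to $dm_{S^2}$ on the sphere of total mass $4\pi$:
\begin{eqnarray}
\int_{\Theta\in\partial B_1}l_\lambda(R\Theta)\,dm_{S^2}\le\sqrt{R}\int_{S^2}\left(\int_0^R\Phi_\lambda(t\Theta)\,dt\right)^{1/2}dm_{S^2}\le\sqrt{R}\,\sqrt{4\pi}\left(\int_{S^2}\int_0^R\Phi_\lambda(t\Theta)\,dt\,dm_{S^2}\right)^{1/2}.\nonumber
\end{eqnarray}
The double integral on the right is comparable to $\int_{B_R}\Phi_\lambda\,d\zeta$: in polar coordinates $\int_{B_R}\Phi_\lambda\,d\zeta_1d\zeta_2d\zeta_3=\int_{S^2}\int_0^R\Phi_\lambda(t\Theta)\,t^2\,dt\,dm_{S^2}$, so since $t^2\le R^2$ on $[0,R]$ one gets $\int_{S^2}\int_0^R\Phi_\lambda(t\Theta)\,dt\,dm_{S^2}\le R^{-2}\int_{B_R}\Phi_\lambda\,d\zeta$ — wait, that inequality goes the wrong way; instead one should not insert $t^2$ but rather bound directly. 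The correct route is to bound $\int_0^R\Phi_\lambda(t\Theta)\,dt$ termwise: $\Phi_\lambda(t\Theta)=\frac14\sum_n|t\Theta+\lambda_n|^{-1}$ and by the triangle inequality $|t\Theta+\lambda_n|\ge|\,|t|-|\lambda_n|\,|$ is not enough, so instead use $\int_0^R\frac{dt}{|t\Theta+\lambda_n|}$: after the change of variable this is at most $\int_0^{R}\frac{dt}{|t-|\lambda_n||}$-type only near the singularity; the clean bound is $\int_0^R\frac{dt}{|t\Theta+\lambda_n|}\le C\log\!\big(2+\tfrac{R}{|\lambda_n|}\big)$ when $|\lambda_n|\le R$ and $\le \tfrac{2R}{|\lambda_n|}$ when $|\lambda_n|>R$, and both are dominated by a constant times $\min\{1,R/|\lambda_n|\}$ times a slowly growing factor — comparing with $\psi_\lambda(R)$ via Lemma~3.2 then yields $\int_0^R\Phi_\lambda(t\Theta)\,dt\le C\varphi_\lambda(R)$ for a universal $C$.

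Combining, $\int_{\partial B_1}l_\lambda(R\Theta)\,dm_{S^2}\le\sqrt{R}\cdot\sqrt{4\pi}\cdot\sqrt{4\pi\, C\varphi_\lambda(R)}=4\pi\sqrt{C R\varphi_\lambda(R)}$, which is the claim with $C_+:=C$. The main obstacle I anticipate is precisely the last estimate $\int_0^R\Phi_\lambda(t\Theta)\,dt\le C\varphi_\lambda(R)$ uniformly in $\Theta$ and $\lambda$: one must handle the logarithmic divergence of $1/|t\Theta+\lambda_n|$ as $t\Theta$ passes through $-\lambda_n$ (only possible when $\Theta=-\lambda_n/|\lambda_n|$), show the resulting $\log$ is integrable, and then check that the $n$-sum of these contributions is bounded by a constant multiple of $\varphi_\lambda(R)=\sum_n p(R/|\lambda_n|)$ — the point being that $\log(2+x)$ and $\min\{1,x\}$ are \emph{not} comparable for large $x$, so one cannot term-by-term bound $\log(2+R/|\lambda_n|)$ by $\varphi_\lambda$'s summand; the saving is that the logarithm appears under the integral sign over $t\in[0,R]$ with the full factor $1/|t\Theta+\lambda_n|$, whose integral is genuinely $O(\min\{1,R/|\lambda_n|\})$ once one also uses $|t\Theta+\lambda_n|\ge |\lambda_n|-R$ is the wrong bound — rather $|t\Theta+\lambda_n|\ge \tfrac12|\lambda_n|$ for $t\le\tfrac12|\lambda_n|$ and one splits the $t$-integral accordingly. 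Carrying out this splitting carefully, and then summing, is the technical heart; everything else is two applications of Cauchy--Schwarz and bookkeeping with the constant.
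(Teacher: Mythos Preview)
Your plan has a real gap at precisely the point you identify as the ``technical heart'': the claimed bound $\int_0^R\Phi_\lambda(t\Theta)\,dt\le C\varphi_\lambda(R)$ with a universal $C$ is false, and the splitting of the $t$-integral at $\frac{1}{2}|\lambda_n|$ cannot repair it. The term $n=0$ alone (recall $\lambda_0=0$) contributes $\frac{1}{4}\int_1^R t^{-1}\,dt=\frac{1}{4}\log R$ to $\int_1^R\Phi_\lambda(t\Theta)\,dt$ for every $\Theta$, so your inequality would force $\varphi_\lambda(R)\ge c\log R$ for some $c>0$. But $\varphi_\lambda$ can grow arbitrarily slowly: for instance with $|\lambda_n|=2^{2^{|n|}}$ for $n\neq 0$ one has $\varphi_\lambda(R)$ of order $\log\log R$. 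The same failure persists after integrating over $S^2$ (the spherical mean of $|t\Theta+\lambda_n|^{-1}$ equals $1/\max(t,|\lambda_n|)$, so the $n=0$ term still contributes $\log R$). Thus the very first Cauchy--Schwarz, applied against the constant $1$ on the whole interval $[1,R]$, already loses a factor that cannot be absorbed into $C_+$: it is wasteful because $\Phi_\lambda(t\Theta)$ decays like $1/t$ along the ray, so pairing it with $1$ is far from the equality case.

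The paper avoids this loss by rewriting $\int_{S^2}l_\lambda(R\Theta)\,dm_{S^2}=\int_{B_R\setminus B_1}\sqrt{\Phi_\lambda(\zeta)}\,|\zeta|^{-2}\,d\zeta$ and then applying Cauchy--Schwarz on each dyadic shell $B_{2^{l+1}}\setminus B_{2^l}$ separately. On a single shell the scales are comparable, so Cauchy--Schwarz is essentially sharp; one factor is $\big(\int_{\mathrm{shell}}|\zeta|^{-4}\,d\zeta\big)^{1/2}$, of order $2^{-l/2}$, the other is $\big(\int_{B_{2^{l+1}}}\Phi_\lambda\big)^{1/2}$, bounded by a constant times $2^l\sqrt{\varphi_\lambda(R)}$ via Proposition~3.4, and the sum over $l$ is a geometric series in $\sqrt{2}$ that sums to a constant multiple of $\sqrt{R\varphi_\lambda(R)}$. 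The dyadic decomposition is exactly what turns the would-be $\log R$ into a convergent geometric sum; without it (or an equivalent device inserted before your first Cauchy--Schwarz), the argument does not close.
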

\begin{proof}
We may suppose $R\ge 1$, since the left hand side of the assertion is equal to $0$ if $R<1$. The definition of $l_{\lambda}$ gives
\begin{eqnarray}
\int_{\Theta\in \partial B_1}l_{\lambda}(R\Theta)dm_{S^2} &\le& \int_{\Theta\in \partial B_1}\int_1^R\sqrt{\Phi_{\lambda}(t\Theta)}dtdm_{S^2}\nonumber\\
&=& \int_{\zeta\in B_R\backslash B_1}\frac{\sqrt{\Phi_{\lambda}(\zeta)}}{|\zeta|^2}d\zeta,
\end{eqnarray}
where $d\zeta = d\zeta_1d\zeta_2d\zeta_3$.\\
\quad Take $m\in\mathbb{Z}_{\ge 0}$ to be $2^m\le R<2^{m+1}$. Then the Cauchy-Schwarz inequality gives
\begin{eqnarray}
(3) &=& \sum_{l=0}^{m-1}\int_{\zeta\in B_{2^{l+1}}\backslash B_{2^l}}\frac{\sqrt{\Phi_{\lambda}(\zeta)}}{|\zeta|^2}d\zeta + \int_{\zeta\in B_R\backslash B_{2^m}}\frac{\sqrt{\Phi_{\lambda}(\zeta)}}{|\zeta|^2}d\zeta\nonumber\\
&\le& \sum_{l=0}^{m-1}\sqrt{\int_{(r,\Theta)\in B_{2^{l+1}}\backslash B_{2^l}}\frac{r^2drd\Theta}{r^4}}\sqrt{\int_{\zeta\in B_{2^{l+1}}\backslash B_{2^l}}\Phi_{\lambda}(\zeta)d\zeta}\nonumber\\
&\ & \quad  + \sqrt{\int_{(r,\Theta)\in B_R\backslash B_{2^m}}\frac{r^2drd\Theta}{r^4}}\sqrt{\int_{\zeta\in B_R\backslash B_{2^m}}\Phi_{\lambda}(\zeta)d\zeta}.\nonumber
\end{eqnarray}
From Proposition 3.4, the inequalities
\begin{eqnarray}
\int_{\zeta\in B_t\backslash B_{t'}}\Phi_{\lambda}(\zeta)d\zeta \le \int_{\zeta\in B_t}\Phi_{\lambda}(\zeta)d\zeta \le \frac{P_+}{\pi}t^2\varphi_{\lambda}(t)\nonumber
\end{eqnarray}
hold for any $0\le t'\le t$. Therefore the assertion follows from
\begin{eqnarray}
(3) &\le& \sqrt{4\pi}\sum_{l=0}^{m-1}\sqrt{\frac{1}{2^l} - \frac{1}{2^{l + 1}}}\sqrt{\frac{P_+}{\pi}2^{2(l + 1)}\varphi_{\lambda}(2^{l + 1})}\nonumber\\
&\ & \quad  + \sqrt{4\pi}\sqrt{\frac{1}{2^m} - \frac{1}{R}}\sqrt{\frac{P_+}{\pi}R^2\varphi_{\lambda}(R)}\nonumber\\
&\le& 2\sum_{l=0}^{m-1}\sqrt{2}^{l+1}\sqrt{P_+\varphi_{\lambda}(R)} + 2\sqrt{P_+R\varphi_{\lambda}(R)}\nonumber\\
&\le& 2(3 + \sqrt{2})\sqrt{P_+R\varphi_{\lambda}(R)}\nonumber
\end{eqnarray}
by putting $C_+=(\frac{3 + \sqrt{2}}{2\pi})^2P_+$.
\end{proof}
Since $R\varphi_{\lambda}(R)$ diverges to $+\infty$ for $R\to +\infty$, there is a constant $R_0>0$ which satisfies
\begin{eqnarray}
4\pi L_{\lambda} + \int_{\Theta\in S^2}l_{\lambda}(R\Theta)dm_{S^2} \le 4\pi\sqrt{2C_+R\varphi_{\lambda}(R)}
\end{eqnarray}
for all $R\ge R_0$. Now we put
\begin{eqnarray}
U_{R,T}:=\{\Theta\in S^2;\ L_{\lambda} + l_{\lambda}(R\Theta)\le \sqrt{TR\varphi_{\lambda}(R)}\}\nonumber
\end{eqnarray}
for $R,T>0$. For a fixed $R>0$, we introduce a function $F(\Theta)$ on $S^2$ by
\begin{eqnarray}
F(\Theta):=L_{\lambda} + l_{\lambda}(R\Theta).\nonumber
\end{eqnarray}
Then $U_{R,T}$ is the inverse image $F^{-1}([0,\sqrt{TR\varphi_{\lambda}(R)}])$, hence $U_{R,T}$ is a measurable set.
\begin{lem}
There exists a sufficiently large $R_0 > 0$ and we have
\begin{eqnarray}
m_{S^2}(U_{R,T}) \ge 4\pi\frac{ \sqrt{T} - \sqrt{2C_+}}{\sqrt{T} - \sqrt{Q_-}}\nonumber
\end{eqnarray}
for $R\ge R_0$ and $T>2C_+$.
\end{lem}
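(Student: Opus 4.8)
The plan is to prove the lemma by a first--moment (Markov) estimate on $S^2$ for the function $F(\Theta)=L_\lambda+l_\lambda(R\Theta)$, sharpened by first subtracting from $F$ the pointwise lower bound that Proposition 3.1 forces on it; the two inputs, already available, are inequality $(4)$ and the distance estimate of Proposition 3.1. First I would establish that pointwise lower bound. Fix $R\ge R_0$, where $R_0\ge 1$ will be enlarged below. For $m_{S^2}$-almost every $\Theta\in S^2$ one has $R\Theta\in Y_\lambda$, because $\{\Theta\in S^2: R\Theta\in\{-\lambda_n\}_{n\in\mathbb{Z}}\}$ is countable; for such $\Theta$ the fiber $\mu_\lambda^{-1}(R\Theta)$ is a nonempty $S^1$-orbit, so I may pick $p$ with $\mu_\lambda(p)=R\Theta$. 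As observed in the discussion preceding this lemma, the comparison path $\gamma_p$ joins $p_0$ to $p$ and has length $\le L_\lambda+l_\lambda(R\Theta)=F(\Theta)$, hence $d_{g_\lambda}(p_0,p)\le F(\Theta)$ regardless of the representative chosen; Proposition 3.1 on the other hand gives $d_{g_\lambda}(p_0,p)^2\ge Q_-|\mu_\lambda(p)|\,\varphi_\lambda(|\mu_\lambda(p)|)=Q_-R\,\varphi_\lambda(R)$. So $F(\Theta)\ge\sqrt{Q_-R\,\varphi_\lambda(R)}$ for $m_{S^2}$-a.e.\ $\Theta$, and the function $G:=F-\sqrt{Q_-R\,\varphi_\lambda(R)}$ is nonnegative a.e.\ on $S^2$.

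Next I would integrate and apply Markov's inequality. Since $m_{S^2}(S^2)=4\pi$, one has $\int_{S^2}F\,dm_{S^2}=4\pi L_\lambda+\int_{S^2}l_\lambda(R\Theta)\,dm_{S^2}\le 4\pi\sqrt{2C_+R\,\varphi_\lambda(R)}$ for $R\ge R_0$ by inequality $(4)$, hence
\[
\int_{S^2}G\,dm_{S^2}\le 4\pi\bigl(\sqrt{2C_+}-\sqrt{Q_-}\bigr)\sqrt{R\,\varphi_\lambda(R)}.
\]
By definition $S^2\setminus U_{R,T}$ is the set where $F(\Theta)>\sqrt{TR\,\varphi_\lambda(R)}$, equivalently where $G(\Theta)>(\sqrt{T}-\sqrt{Q_-})\sqrt{R\,\varphi_\lambda(R)}$, and $\sqrt{T}-\sqrt{Q_-}>0$ since $T>2C_+>Q_-$ (recall $Q_-=1/8$ and $C_+=(\frac{3+\sqrt{2}}{2\pi})^2P_+$ with $P_+=4\pi^2$). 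Markov's inequality for the nonnegative function $G$ then gives
\[
m_{S^2}(S^2\setminus U_{R,T})\le\frac{\int_{S^2}G\,dm_{S^2}}{(\sqrt{T}-\sqrt{Q_-})\sqrt{R\,\varphi_\lambda(R)}}\le\frac{4\pi(\sqrt{2C_+}-\sqrt{Q_-})}{\sqrt{T}-\sqrt{Q_-}},
\]
and subtracting from $4\pi$ and simplifying $4\pi-\frac{4\pi(\sqrt{2C_+}-\sqrt{Q_-})}{\sqrt{T}-\sqrt{Q_-}}=4\pi\frac{\sqrt{T}-\sqrt{2C_+}}{\sqrt{T}-\sqrt{Q_-}}$ yields the assertion, with $R_0$ the threshold appearing in $(4)$.

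The hard part is the first step, i.e.\ making precise that the comparison path $\gamma_p$ genuinely dominates $d_{g_\lambda}(p_0,p)$ and that $X_\lambda$ carries a point over $R\Theta$ for $m_{S^2}$-a.e.\ $\Theta$, so that $F$ really has the pointwise floor $\sqrt{Q_-R\,\varphi_\lambda(R)}$; both follow from the path construction preceding this lemma together with the principal $S^1$-bundle structure of $\mu_\lambda$ over $Y_\lambda$. Everything else is the routine first--moment bound, and the gain over the naive estimate $m_{S^2}(U_{R,T})\ge 4\pi(1-\sqrt{2C_+/T})$ --- which would follow from $(4)$ alone --- is exactly the effect of recentering $F$ at $\sqrt{Q_-R\,\varphi_\lambda(R)}$ before invoking Markov.
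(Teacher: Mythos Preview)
Your proof is correct and follows essentially the same route as the paper's: both use inequality $(4)$ for the upper bound on $\int_{S^2}F\,dm_{S^2}$ and Proposition~3.1 for the pointwise lower bound $F(\Theta)\ge\sqrt{Q_-R\varphi_\lambda(R)}$, then compare. The only cosmetic difference is that the paper splits $\int_{S^2}F$ over $U_{R,T}$ and its complement and solves directly for $m_{S^2}(U_{R,T})$, whereas you recenter to $G=F-\sqrt{Q_-R\varphi_\lambda(R)}$ and invoke Markov's inequality on the complement; unwinding either computation yields the same chain of inequalities.
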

\begin{proof}
From $(4)$, we have the upper bound
\begin{eqnarray}
4\pi\sqrt{2C_+R\varphi_{\lambda}(R)} \ge \int_{\Theta\in S^2}F(\Theta)dm_{S^2}.
\end{eqnarray}
Since $F(\Theta):=L_{\lambda} + l_{\lambda}(R\Theta)\ge d_{g_{\lambda}}(p_0,p)$ and $\mu_{\lambda}(p) = \zeta =R\Theta$, Lemma 3.1 yields the lower bound for $F(\Theta)$,
\begin{eqnarray}
F(\Theta) \ge \sqrt{Q_-R\varphi_{\lambda}(R)}.\nonumber
\end{eqnarray}
On the complement $U_{R,T}^c$ of $U_{R,T}$ in $S^2$, we have $F(\Theta) \ge \sqrt{TR\varphi_{\lambda}(R)}$. Then we have
\begin{eqnarray}
\int_{\Theta\in S^2}F(\Theta)dm_{S^2} &=& \int_{\Theta\in U_{R,T}}F(\Theta)dm_{S^2} + \int_{\Theta\in U_{R,T}^c}F(\Theta)dm_{S^2} \nonumber\\
&\ge& m_{S^2}(U_{R,T})\sqrt{Q_-R\varphi_{\lambda}(R)}\nonumber\\
&\ & \quad + (4\pi - m_{S^2}(U_{R,T}))\sqrt{TR\varphi_{\lambda}(R)}.
\end{eqnarray}
From inequalities $(5)$ and $(6)$, we have
\begin{eqnarray}
4\pi\sqrt{2C_+R\varphi_{\lambda}(R)} &\ge& m_{S^2}(U_{R,T})\sqrt{Q_-R\varphi_{\lambda}(R)}\nonumber\\
&\ & \quad + (4\pi - m_{S^2}(U_{R,T}))\sqrt{TR\varphi_{\lambda}(R)}.\nonumber
\end{eqnarray}
Thus we have
\begin{eqnarray}
(\sqrt{T} - \sqrt{Q_-})m_{S^2}(U_{R,T})\sqrt{R\varphi_{\lambda}(R)} \ge 4\pi(\sqrt{T} - \sqrt{2C_+})\sqrt{R\varphi_{\lambda}(R)}.\nonumber
\end{eqnarray}
Since $T > 2C_+ > Q_-$, we obtain
\begin{eqnarray}
m_{S^2}(U_{R,T}) \ge 4\pi\frac{\sqrt{T} - \sqrt{2C_+}}{\sqrt{T} - \sqrt{Q_-}}.\nonumber
\end{eqnarray}
\end{proof}
\begin{lem}
For each $R\ge 0$ and $T>0$, $\mu_{\lambda}^{-1}(B_{R,U_{R,T}})$ is a subset of $B_{g_{\lambda}}(p_0, \sqrt{\tau_{\lambda,T}(R)})$.
\end{lem}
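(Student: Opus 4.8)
The plan is to read off the containment directly from the distance bound $d_{g_{\lambda}}(p_0,p)\le L_{\lambda}+l_{\lambda}(\mu_{\lambda}(p))$ established just before the lemma, together with the definition of $U_{R,T}$ and the elementary monotonicity of $l_{\lambda}$ along rays. So let $p\in\mu_{\lambda}^{-1}(B_{R,U_{R,T}})$. By the definition of $B_{R,U_{R,T}}$, either $\mu_{\lambda}(p)=0$, or $\mu_{\lambda}(p)=t\Theta$ for some $\Theta\in U_{R,T}\subset S^2$ and some $0<t\le R$; in the latter case $\Theta=\mu_{\lambda}(p)/|\mu_{\lambda}(p)|$ and $t=|\mu_{\lambda}(p)|$.

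First I would record that $l_{\lambda}$ is nondecreasing along each ray: since $\Phi_{\lambda}>0$ on $Y_{\lambda}$, for $s\ge 1$ the quantity $l_{\lambda}(s\Theta)=\int_1^{s}\sqrt{\Phi_{\lambda}(\sigma\Theta)}\,d\sigma$ is a nondecreasing function of $s$ (the integrand being nonnegative and the improper integral being finite even across the poles $-\lambda_n$, as noted above), while $l_{\lambda}(s\Theta)=0$ for $s\le 1$. Hence $l_{\lambda}(t\Theta)\le l_{\lambda}(R\Theta)$ whenever $0\le t\le R$. Combining this with the distance bound and the defining inequality of $U_{R,T}$,
\begin{eqnarray}
d_{g_{\lambda}}(p_0,p)\ \le\ L_{\lambda}+l_{\lambda}(t\Theta)\ \le\ L_{\lambda}+l_{\lambda}(R\Theta)\ \le\ \sqrt{TR\varphi_{\lambda}(R)}\ =\ \sqrt{\tau_{\lambda,T}(R)},\nonumber
\end{eqnarray}
where the last equality is just the definition $\tau_{\lambda,T}(R)=T\varphi_{\lambda}(R)R$. (When $\mu_{\lambda}(p)=0$ the same conclusion holds trivially, $d_{g_{\lambda}}(p_0,p)=0$.) This shows $p\in B_{g_{\lambda}}(p_0,\sqrt{\tau_{\lambda,T}(R)})$, which is the assertion.

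There is no genuine obstacle in this lemma; it is a bookkeeping step linking the estimates of the two preceding lemmas to the volume formula of Proposition 4.1. The only point worth a second look is the monotonicity of $l_{\lambda}$ along rays, which is precisely what allows one to control the whole solid sector $B_{R,U_{R,T}}$ by the single outer-radius inequality defining $U_{R,T}$, rather than only its outermost shell. If one wishes to be pedantic about the open ball in the definition of $B_{g_{\lambda}}(p_0,\cdot)$, the displayed chain actually yields $d_{g_{\lambda}}(p_0,p)\le\sqrt{\tau_{\lambda,T}(R)}$, i.e. membership in the closed ball; this is harmless for the volume comparison in which the lemma is subsequently used (and one may in any case enlarge the radius by an arbitrarily small amount).
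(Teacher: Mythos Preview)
Your argument is correct and follows the same route as the paper's proof: both use the distance bound $d_{g_{\lambda}}(p_0,p)\le L_{\lambda}+l_{\lambda}(\mu_{\lambda}(p))$, the radial monotonicity of $l_{\lambda}$, and the defining inequality of $U_{R,T}$ to conclude. The only cosmetic difference is that the paper splits into the cases $|\mu_{\lambda}(p)|>1$ and $|\mu_{\lambda}(p)|\le 1$ rather than $\mu_{\lambda}(p)\ne 0$ versus $\mu_{\lambda}(p)=0$, and your remark on the open-versus-closed ball (which the paper does not comment on) is accurate and harmless for the volume estimate that follows.
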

\begin{proof}
First of all we take $p\in \mu_{\lambda}^{-1}(B_{R,U_{R,T}})$ such that $|\mu_{\lambda}(p)|> 1$. Since $\frac{\mu_{\lambda}(p)}{|\mu_{\lambda}(p)|}$ is an element of $U_{R,T}$, we have
\begin{eqnarray}
L_{\lambda} + l_{\lambda}(\mu_{\lambda}(p)) \le L_{\lambda} + l_{\lambda}\bigg(R\frac{\mu_{\lambda}(p)}{|\mu_{\lambda}(p)|}\bigg) \le \sqrt{TR\varphi_{\lambda}(R)}.\nonumber
\end{eqnarray}
from $R\ge |\mu_{\lambda}(p)| >1$. Then we obtain $d_{g_{\lambda}}(p_0,p)\le \sqrt{TR\varphi_{\lambda}(R)}$ from $d_{g_{\lambda}}(p_0,p)\le L_{\lambda} + l_{\lambda}(\mu_{\lambda}(p))$.\\
\quad If $p\in \mu_{\lambda}^{-1}(B_{R,U_{R,T}})$ is taken to be $|\mu_{\lambda}(p)|\le 1$, then we have the same conclusion as above since $l_{\lambda}(\mu_{\lambda}(p))=0$ in this case.
\end{proof}
Now we fix a constant $Q_+$ to be $Q_+>2C_+$ and put $m_0:=4\pi\frac{\sqrt{Q_+} - \sqrt{2C_+}}{\sqrt{Q_+} - \sqrt{Q_-}}$ and $P_-:=m_0C_-$.
\begin{prop}
Let $P_-,Q_+>0$ be as above. Then we have
\begin{eqnarray}
\liminf_{r\to\infty}\frac{V_{g_{\lambda}}(p_0,r)}{\theta_{\lambda,P_-}\circ\tau_{\lambda,Q_+}^{-1}(r^2)}>0.\nonumber
\end{eqnarray}
\end{prop}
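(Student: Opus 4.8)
The plan is to combine Proposition 4.1, Lemma 4.3 and Lemma 4.4 directly; no new analytic input is needed beyond what Sections 3 and 4 have already established. Fix $T=Q_+$, which satisfies $Q_+>2C_+$ by the choice made just before the statement, and let $R_0$ be as in Lemma 4.3. For $R\ge R_0$ I would argue as follows. By Lemma 4.4 the set $\mu_{\lambda}^{-1}(B_{R,U_{R,Q_+}})$ is contained in the geodesic ball $B_{g_{\lambda}}(p_0,\sqrt{\tau_{\lambda,Q_+}(R)})$, so
\begin{eqnarray}
V_{g_{\lambda}}(p_0,\sqrt{\tau_{\lambda,Q_+}(R)})\ \ge\ {\rm vol}_{g_{\lambda}}(\mu_{\lambda}^{-1}(B_{R,U_{R,Q_+}})).\nonumber
\end{eqnarray}
Proposition 4.1 bounds the right-hand side below by $C_-\,m_{S^2}(U_{R,Q_+})\,R^2\varphi_{\lambda}(R)$, and Lemma 4.3 gives $m_{S^2}(U_{R,Q_+})\ge m_0$. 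Hence $V_{g_{\lambda}}(p_0,\sqrt{\tau_{\lambda,Q_+}(R)})\ge C_-m_0\,R^2\varphi_{\lambda}(R)=P_-R^2\varphi_{\lambda}(R)=\theta_{\lambda,P_-}(R)$ for all $R\ge R_0$.

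The remaining step is a change of variable. The function $\tau_{\lambda,Q_+}(R)=Q_+R\varphi_{\lambda}(R)$ is continuous, strictly increasing, and tends to $+\infty$ as $R\to+\infty$ (this was already used in Section 3 and to define $\tau_{\lambda,Q_+}^{-1}$), so it is a homeomorphism of $\mathbb{R}_{\ge 0}$ onto itself. Writing $r=\sqrt{\tau_{\lambda,Q_+}(R)}$, so that $R=\tau_{\lambda,Q_+}^{-1}(r^2)$, the inequality above reads
\begin{eqnarray}
V_{g_{\lambda}}(p_0,r)\ \ge\ \theta_{\lambda,P_-}\circ\tau_{\lambda,Q_+}^{-1}(r^2)\nonumber
\end{eqnarray}
for every $r\ge r_0:=\sqrt{\tau_{\lambda,Q_+}(R_0)}$, since as $R$ runs over $[R_0,\infty)$ the value $\sqrt{\tau_{\lambda,Q_+}(R)}$ runs over all of $[r_0,\infty)$. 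Consequently the ratio $V_{g_{\lambda}}(p_0,r)\big/\big(\theta_{\lambda,P_-}\circ\tau_{\lambda,Q_+}^{-1}(r^2)\big)$ is $\ge 1$ for all large $r$, hence its $\liminf$ as $r\to\infty$ is $\ge 1>0$, which is the assertion.

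I do not expect any genuinely hard step here: the substantial work was done in Lemma 4.3, where the spherical average of $l_{\lambda}$ was controlled via Proposition 3.4 and the averaging trick, and in Proposition 4.1. The only points that need a little care are (a) using $T=Q_+$ consistently throughout, since $m_0$ and therefore $P_-$ are tuned to precisely this choice, and (b) checking that $R\mapsto\sqrt{\tau_{\lambda,Q_+}(R)}$ exhausts a neighbourhood of $+\infty$, so that the pointwise inequality obtained for $R\ge R_0$ upgrades to one valid for all sufficiently large $r$; both are immediate from the monotonicity and divergence of $\tau_{\lambda,Q_+}$.
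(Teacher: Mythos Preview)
Your proof is correct and follows essentially the same route as the paper: apply Lemma 4.4 with $T=Q_+$ to get the inclusion $\mu_{\lambda}^{-1}(B_{R,U_{R,Q_+}})\subset B_{g_{\lambda}}(p_0,\sqrt{\tau_{\lambda,Q_+}(R)})$, bound the volume below via Proposition 4.1 and Lemma 4.3, and then substitute $R=\tau_{\lambda,Q_+}^{-1}(r^2)$. Your treatment of the change of variable is slightly more explicit than the paper's, but the argument is the same.
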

\begin{proof}
Let $R\ge 0$. From Lemma 4.4, we have
\begin{eqnarray}
V_{g_{\lambda}}(p_0,\sqrt{\tau_{\lambda,Q_+}(R)})\ge {\rm vol}_{g_{\lambda}}(\mu_{\lambda}^{-1}(B_{R,U_{R,Q_+}})).\nonumber
\end{eqnarray}
Then Proposition 4.1 gives
\begin{eqnarray}
V_{g_{\lambda}}(p_0,\sqrt{\tau_{\lambda,Q_+}(R)}) &\ge& m_{S^2}(U_{R,Q_+})C_-R^2\varphi_{\lambda}(R)\nonumber\\
&\ge& m_0C_-R^2\varphi_{\lambda}(R) = \theta_{\lambda,P_-}(R)\nonumber
\end{eqnarray}
for $R\ge R_0$. Thus we have the assertion by putting $R=\tau_{\lambda,Q_+}^{-1}(r^2)$.
\end{proof}
\section{The volume growth}
\quad In Sections 3 and 4, we estimate $V_{g_{\lambda}}(p_0,r)$ from the above by $\theta_{\lambda,P_+}\circ\tau_{\lambda,Q_-}^{-1}(r^2)$ and from the bottom by $\theta_{\lambda,P_-}\circ\tau_{\lambda,Q_+}^{-1}(r^2)$. In this section we show that the asymptotic behavior of the functions $\theta_{\lambda,P_+}\circ\tau_{\lambda,Q_-}^{-1}(r^2)$ and $\theta_{\lambda,P_-}\circ\tau_{\lambda,Q_+}^{-1}(r^2)$ are equal up to constant, and prove the main results.\\
\quad The asymptotic behavior of $V_{g_{\lambda}}(p,r)$ is independent of the choice of $p\in X_{\lambda}$ from the next well-known fact.
\begin{prop}
Let $(X,g)$ be a connected Riemannian manifold of dimension $n$, whose Ricci curvature is nonnegative. Then we have
\begin{eqnarray}
\lim_{r\to +\infty}\frac{V_g(p_1,r)}{V_g(p_0,r)} = 1\nonumber
\end{eqnarray}
for any $p_0,p_1\in X$.
\end{prop}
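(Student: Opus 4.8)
The plan is to deduce the statement from the Bishop--Gromov volume comparison theorem \cite{BC}\cite{GLP}, which applies because $\mathrm{Ric}_g\ge 0$. First I would fix $p_0,p_1\in X$, set $d:=d_g(p_0,p_1)$, and use the triangle inequality to obtain the inclusions $B_g(p_0,r-d)\subset B_g(p_1,r)\subset B_g(p_0,r+d)$ for every $r>d$. Taking volumes and dividing by $V_g(p_0,r)>0$ gives
\begin{eqnarray}
\frac{V_g(p_0,r-d)}{V_g(p_0,r)}\le \frac{V_g(p_1,r)}{V_g(p_0,r)}\le\frac{V_g(p_0,r+d)}{V_g(p_0,r)}\nonumber
\end{eqnarray}
for all $r>d$, so it suffices to prove that the outer two ratios converge to $1$ as $r\to+\infty$.

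The key input is the Bishop--Gromov monotonicity: since $\mathrm{Ric}_g\ge 0$, the function $r\mapsto V_g(p_0,r)/(\omega_n r^n)$ is non-increasing, where $\omega_n$ denotes the volume of the Euclidean unit $n$-ball. Hence, for $r>0$,
\begin{eqnarray}
1\le \frac{V_g(p_0,r+d)}{V_g(p_0,r)}\le \frac{\omega_n(r+d)^n}{\omega_n r^n}=\left(1+\frac{d}{r}\right)^n,\nonumber
\end{eqnarray}
the left inequality being simply the monotonicity of $r\mapsto V_g(p_0,r)$. Letting $r\to+\infty$ and applying the squeeze theorem yields $\lim_{r\to+\infty}V_g(p_0,r+d)/V_g(p_0,r)=1$. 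Replacing $r$ by $r-d$ in the same estimate gives $\lim_{r\to+\infty}V_g(p_0,r)/V_g(p_0,r-d)=1$, and taking reciprocals gives $\lim_{r\to+\infty}V_g(p_0,r-d)/V_g(p_0,r)=1$ as well.

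Combining these two limits with the sandwich displayed above and one more application of the squeeze theorem gives $\lim_{r\to+\infty}V_g(p_1,r)/V_g(p_0,r)=1$, which is the assertion. I do not expect a genuine obstacle here: the only nontrivial ingredient is the Bishop--Gromov monotonicity statement, which is exactly what the cited references supply, and everything else is the triangle inequality together with the elementary limit $(1+d/r)^n\to 1$. (In the application to $(X_{\lambda},g_{\lambda})$ the metric is moreover complete and Ricci-flat, so the comparison theorem applies without any extra care.)
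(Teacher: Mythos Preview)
Your proof is correct and follows essentially the same approach as the paper: triangle-inequality inclusions of balls combined with the Bishop--Gromov monotonicity of $V_g(p_0,r)/r^n$ and the elementary limit $(1+d/r)^n\to 1$. The only cosmetic difference is that the paper uses the single inclusion $B_g(p_1,r)\subset B_g(p_0,r+r_0)$ to get $\limsup V_g(p_1,r)/V_g(p_0,r)\le 1$ and then invokes symmetry in $p_0,p_1$, whereas you use both inclusions at once to sandwich the ratio directly.
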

\begin{proof}
From the Bishop-Gromov comparison inequality $\frac{V_g(p_1,r)}{r^n}$ is nonincreasing with respect to $r$. If we put $r_0:=d_g(p_0,p_1)$, then we have
\begin{eqnarray}
\frac{V_g(p_1,r)}{V_g(p_0,r)} &\le& \frac{V_g(p_0,r + r_0)}{V_g(p_0,r)}\nonumber\\&=& \frac{r^n}{V_g(p_0,r)}\frac{V_g(p_0,r + r_0)}{(r + r_0)^n}\frac{(r + r_0)^n}{r^n}\nonumber\\
&\le& \frac{r^n}{V_g(p_0,r)}\frac{V_g(p_0,r)}{r^n}\frac{(r + r_0)^n}{r^n}\nonumber\\
&=& \frac{(r + r_0)^n}{r^n}\to 1\quad (r\to +\infty).\nonumber
\end{eqnarray}
By considering the same argument after exchanging $p_0$ and $p_1$, we have the assertion.
\end{proof}
We denote by $C_+^0$ the set of the nondecreasing continuous functions from $\mathbb{R}_{\ge 0}$ to $\mathbb{R}_{\ge 0}$.
\begin{defi}
{\rm For $f_0,f_1\in C_+^0$, we define $f_0(r)\preceq_r f_1(r)$ if
\begin{eqnarray}
\limsup _{r\to +\infty}\frac{f_0(r)}{f_1(r)}<+\infty.\nonumber
\end{eqnarray}
}
\end{defi}
\begin{defi}
{\rm For $f_0,f_1\in C_+^0$, we define $f_0(r)\simeq_r f_1(r)$ if $f_0(r)\preceq_r f_1(r)$ and $f_1(r)\preceq_r f_0(r)$.}
\end{defi}
For a Riemannian manifold $(X,g)$ and a point $p_0\in X$, the function $V_g(p_0,r)$ is an element of $C_+^0$. If $(X,g)$ satisfies the assumption of Proposition 5.1, then the equivalence class of $V_g(p_0,r)$ with respect to $\simeq_r$ is independent of the choice of $p_0\in X$. Therefore we denoted by $V_g(r)$ the equivalence class of $V_g(p_0,r)$. For example, if we write $V_g(r)\preceq_r r^n$, then it implies that
\begin{eqnarray}
\limsup_{r\to +\infty}\frac{V_g(p_0,r)}{r^n}<+\infty\nonumber
\end{eqnarray}
for some (hence all) $p_0\in X$.\\
\quad The main purpose of this section is to look for the function in $C_+^0$ which is equivalent to $V_{g_{\lambda}}(r)$.
\begin{lem}
Let $S_+,S_-,T_+,T_->0$. Then $\theta_{\lambda,S_+}\circ\tau_{\lambda,T_-}^{-1}$ and $\theta_{\lambda,S_-}\circ\tau_{\lambda,T_+}^{-1}$ are elements of $C_+^0$ and we have $\theta_{\lambda,S_+}\circ\tau_{\lambda,T_-}^{-1}(r^2) \simeq_r \theta_{\lambda,S_-}\circ\tau_{\lambda,T_+}^{-1}(r^2)$.
\end{lem}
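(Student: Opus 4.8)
The plan is to reduce the statement, once the constants $S_\pm,T_\pm$ are peeled off, to a single scaling estimate for the inverse function $\tau_{\lambda,1}^{-1}$. First I would record the elementary identities $\theta_{\lambda,C}(R)=R\,\tau_{\lambda,C}(R)$, $\theta_{\lambda,S}=(S/T)\theta_{\lambda,T}$ and $\tau_{\lambda,T}=T\tau_{\lambda,1}$, all immediate from the definitions $\theta_{\lambda,C}(R)=C\varphi_{\lambda}(R)R^2$, $\tau_{\lambda,C}(R)=C\varphi_{\lambda}(R)R$. Substituting $R=\tau_{\lambda,T}^{-1}(s)$ then gives the closed form
\[
\theta_{\lambda,S}\circ\tau_{\lambda,T}^{-1}(s)=\frac{S}{T}\,s\,\tau_{\lambda,T}^{-1}(s)=\frac{S}{T}\,s\,\tau_{\lambda,1}^{-1}\!\left(\frac{s}{T}\right).
\]
For membership in $C_+^0$: the function $\varphi_{\lambda}$ is nondecreasing and continuous (on each $[0,A]$ the defining series is dominated termwise by $\sum_n\frac{A}{A+|\lambda_n|}$, which is finite since $\sum_n\frac{1}{1+|\lambda_n|}<+\infty$, so the series converges uniformly there), hence each $\tau_{\lambda,T}(R)=TR\varphi_{\lambda}(R)$ is continuous, strictly increasing on $[0,\infty)$, with $\tau_{\lambda,T}(0)=0$ and $\tau_{\lambda,T}(R)\to+\infty$; thus $\tau_{\lambda,T}$ is a homeomorphism of $[0,\infty)$, $\tau_{\lambda,T}^{-1}$ is continuous and nondecreasing, and composing with the continuous nondecreasing map $\theta_{\lambda,S}$ stays inside $C_+^0$.

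The analytic core is the claim that, for any fixed $a>0$, the ratio $\tau_{\lambda,1}^{-1}(as)/\tau_{\lambda,1}^{-1}(s)$ stays bounded away from $0$ and $+\infty$; concretely I expect the clean bounds $\sqrt{a}\,\tau_{\lambda,1}^{-1}(s)\le\tau_{\lambda,1}^{-1}(as)\le a\,\tau_{\lambda,1}^{-1}(s)$ for all $a\ge 1$ and all $s\ge 0$. These rest on the scaling of $\tau_{\lambda,1}(R)=R\varphi_{\lambda}(R)$: for $\beta\ge 1$, monotonicity of $\varphi_{\lambda}$ gives $\tau_{\lambda,1}(\beta R)=\beta R\,\varphi_{\lambda}(\beta R)\ge\beta\,\tau_{\lambda,1}(R)$, while Lemma 3.3 ($\varphi_{\lambda}(\beta R)\le\beta\varphi_{\lambda}(R)$) gives $\tau_{\lambda,1}(\beta R)\le\beta^2\,\tau_{\lambda,1}(R)$. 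Writing $R=\tau_{\lambda,1}^{-1}(s)$: the choice $\beta=a$ in the first inequality yields $\tau_{\lambda,1}(aR)\ge as$, hence $\tau_{\lambda,1}^{-1}(as)\le aR$; the choice $\beta=\sqrt{a}$ in the second yields $\tau_{\lambda,1}(\sqrt{a}\,R)\le as$, hence $\tau_{\lambda,1}^{-1}(as)\ge\sqrt{a}\,R$. The case $0<a<1$ follows by applying the case just proved with $a$ replaced by $1/a$ and $s$ by $as$.

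Finally I would assemble the pieces. From the closed form above and the fact (the case $a=1/T$ just established) that $\tau_{\lambda,1}^{-1}(s/T)/\tau_{\lambda,1}^{-1}(s)$ is bounded away from $0$ and $+\infty$, every $\theta_{\lambda,S}\circ\tau_{\lambda,T}^{-1}(s)$ has bounded ratio, in both directions, with $s\,\tau_{\lambda,1}^{-1}(s)$; in particular $\theta_{\lambda,S_+}\circ\tau_{\lambda,T_-}^{-1}(s)$ and $\theta_{\lambda,S_-}\circ\tau_{\lambda,T_+}^{-1}(s)$ have bounded ratio in both directions, and substituting $s=r^2$ gives $\theta_{\lambda,S_+}\circ\tau_{\lambda,T_-}^{-1}(r^2)\simeq_r\theta_{\lambda,S_-}\circ\tau_{\lambda,T_+}^{-1}(r^2)$, the substitution being harmless since $r\to+\infty$ iff $r^2\to+\infty$ and ratios are unchanged. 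I do not foresee a genuine obstacle; the only step deserving care is the passage from the scaling inequalities for $\tau_{\lambda,1}$ to those for $\tau_{\lambda,1}^{-1}$, but since all the relevant inequalities already hold for every $R\ge 0$, no asymptotic argument beyond the definition of $\simeq_r$ is needed, and the whole proof should be short.
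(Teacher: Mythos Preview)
Your proposal is correct, and the underlying ingredients are the same as in the paper: monotonicity of $\varphi_{\lambda}$ and the sublinearity estimate $\varphi_{\lambda}(\alpha R)\le\alpha\varphi_{\lambda}(R)$ of Lemma~3.3. The organization, however, differs. The paper argues the two directions separately: assuming $T_-\le T_+$, one direction follows from $\tau_{\lambda,T_+}^{-1}\le\tau_{\lambda,T_-}^{-1}$ and monotonicity of $\theta$, while for the other direction the paper sets $R_{\pm}=\tau_{\lambda,T_{\pm}}^{-1}(r^2)$, uses monotonicity of $R\varphi_{\lambda}(R)$ to obtain $R_-\le(T_+/T_-)R_+$, and then Lemma~3.3 to bound $\theta_{\lambda,S_+}(R_-)\le(T_+/T_-)^3\theta_{\lambda,S_+}(R_+)$. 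Your route is more structural: the closed form $\theta_{\lambda,S}\circ\tau_{\lambda,T}^{-1}(s)=(S/T)\,s\,\tau_{\lambda,1}^{-1}(s/T)$ reduces everything to a single reference function $s\,\tau_{\lambda,1}^{-1}(s)$, and the two-sided scaling bound $\sqrt{a}\le\tau_{\lambda,1}^{-1}(as)/\tau_{\lambda,1}^{-1}(s)\le a$ for $a\ge 1$ handles all constants at once. This gives slightly sharper explicit bounds (the paper obtains a cubic factor $(T_+/T_-)^3$ where your argument gives $(T_+/T_-)$) and avoids the case split, at the cost of first recording the identity; either way the proof is short.
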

\begin{proof}
We may suppose $T_-\le T_+$ without loss of generality. Since $\tau_{\lambda,T_+}$ and $\tau_{\lambda,T_-}$ are continuous, strictly increasing and satisfy $\tau_{\lambda,T_+}(0) = \tau_{\lambda,T_-}(0) = 0$, then $\tau_{\lambda,T_+}^{-1}$ and $\tau_{\lambda,T_-}^{-1}$ are also the elements of $C_+^0$. Hence the composite functions $\theta_{\lambda,S_+}\circ\tau_{\lambda,T_-}^{-1}$ and $\theta_{\lambda,S_-}\circ\tau_{\lambda,T_+}^{-1}$ are also the elements of $C_+^0$.\\
\quad Next we show (i) $\theta_{\lambda,S_-}\circ\tau_{\lambda,T_+}^{-1}(r^2) \preceq_r \theta_{\lambda,S_+}\circ\tau_{\lambda,T_-}^{-1}(r^2)$ and (ii) $\theta_{\lambda,S_+}\circ\tau_{\lambda,T_-}^{-1}(r^2) \preceq_r \theta_{\lambda,S_-}\circ\tau_{\lambda,T_+}^{-1}(r^2)$.\\
(i) From $\tau_{\lambda,T_+}(R) \ge \tau_{\lambda,T_-}(R)$ and strictly increasingness of $\tau_{\lambda,T_{\pm}}$, then we have $\tau_{\lambda,T_+}^{-1}(r^2) \le \tau_{\lambda,T_-}^{-1}(r^2)$. Hence we obtain
\begin{eqnarray}
\frac{\theta_{\lambda,S_-}\circ\tau_{\lambda,T_+}^{-1}(r^2)}{\theta_{\lambda,S_+}\circ\tau_{\lambda,T_-}^{-1}(r^2)} \le 
\frac{\theta_{\lambda,S_-}\circ\tau_{\lambda,T_-}^{-1}(r^2)}{\theta_{\lambda,S_+}\circ\tau_{\lambda,T_-}^{-1}(r^2)} \le 
\frac{S_-}{S_+}.\nonumber
\end{eqnarray}
(ii) Put $R_{\pm}:=\tau_{\lambda,T_{\pm}}^{-1}(r^2)$. Then we have
\begin{eqnarray}
r^2 = T_+R_+\varphi_{\lambda}(R_+) = T_-R_-\varphi_{\lambda}(R_-).\nonumber
\end{eqnarray}
Since $\varphi_{\lambda}$ is nondecreasing, it holds
\begin{eqnarray}
R_-\varphi_{\lambda}(R_-) = \frac{T_+}{T_-}R_+\varphi_{\lambda}(R_+) \le \frac{T_+}{T_-}R_+\varphi_{\lambda}\bigg(\frac{T_+}{T_-}R_+\bigg)
\end{eqnarray}
Since the function $R\varphi_{\lambda}(R)$ is strictly increasing with respect to $R$, the expression $(7)$ gives $R_-\le\frac{T_+}{T_-}R_+$. Recall that $\varphi_{\lambda}$ satisfies $\varphi_{\lambda}(\alpha R) \le \alpha\varphi_{\lambda}(R)$ for $\alpha\ge 1$, which implies $\theta_{\lambda,S_+}(\alpha R) \le \alpha^3\theta_{\lambda,S_+}(R)$. Thus we have
\begin{eqnarray}
\frac{\theta_{\lambda,S_+}\circ\tau_{\lambda,T_-}^{-1}(r^2)}{\theta_{\lambda,S_-}\circ\tau_{\lambda,T_+}^{-1}(r^2)} = \frac{\theta_{\lambda,S_+}(R_-)}{\theta_{\lambda,S_-}(R_+)} \le \frac{\theta_{\lambda,S_+}\bigg(\frac{T_+}{T_-}R_+\bigg)}{\theta_{\lambda,S_-}(R_+)} &\le& \bigg(\frac{T_+}{T_-}\bigg)^3\frac{\theta_{\lambda,S_+}(R_+)}{\theta_{\lambda,S_-}(R_+)}\nonumber\\
&=& \bigg(\frac{T_+}{T_-}\bigg)^3\frac{S_+}{S_-}.\nonumber
\end{eqnarray}
\end{proof}
Put $\theta_{\lambda}:=\theta_{\lambda,1}$, $\tau_{\lambda}:=\tau_{\lambda,1}$. Then the main result in this paper is described as follows.
\begin{thm}
For each $\lambda\in ({\rm Im}\mathbb{H})_0^{\mathbb{Z}}$ and $p\in X_{\lambda}$, the function $V_{g_{\lambda}}(p,r)$ satisfies
\begin{eqnarray}
0 < \liminf_{r\to +\infty}\frac{V_{g_{\lambda}}(p,r)}{r^2\tau_{\lambda}^{-1}(r^2)} \le \limsup_{r\to +\infty}\frac{V_{g_{\lambda}}(p,r)}{r^2\tau_{\lambda}^{-1}(r^2)} < +\infty.\nonumber
\end{eqnarray}
\end{thm}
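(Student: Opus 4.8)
The plan is to assemble the two–sided estimate already established in Sections 3 and 4, rewrite it in terms of the single reference function $r^2\tau_{\lambda}^{-1}(r^2)$ with the help of Lemma 5.4, and then remove the dependence on the base point using Proposition 5.1.

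First I would observe that $r^2\tau_{\lambda}^{-1}(r^2)$ is nothing but $\theta_{\lambda,1}\circ\tau_{\lambda,1}^{-1}(r^2)$. Indeed, putting $R:=\tau_{\lambda}^{-1}(r^2)$, so that $\tau_{\lambda}(R)=R\varphi_{\lambda}(R)=r^2$, one has
\begin{eqnarray}
\theta_{\lambda}\circ\tau_{\lambda}^{-1}(r^2) = \theta_{\lambda,1}(R) = R^2\varphi_{\lambda}(R) = R\cdot\tau_{\lambda}(R) = r^2\tau_{\lambda}^{-1}(r^2).\nonumber
\end{eqnarray}
In particular $r^2\tau_{\lambda}^{-1}(r^2)$ belongs to $C_+^0$, and by Lemma 5.4 it is $\simeq_r$–equivalent to each of $\theta_{\lambda,P_+}\circ\tau_{\lambda,Q_-}^{-1}(r^2)$ and $\theta_{\lambda,P_-}\circ\tau_{\lambda,Q_+}^{-1}(r^2)$ (feeding the four constants $P_{\pm},Q_{\pm},1,1$ into that lemma, in whichever admissible order the statement requires).

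Next I would combine the bounds. Fix the point $p_0\in X_{\lambda}$ with $\mu_{\lambda}(p_0)=0$ as in Sections 3 and 4. Proposition 3.5 gives $V_{g_{\lambda}}(p_0,r)\le\theta_{\lambda,P_+}\circ\tau_{\lambda,Q_-}^{-1}(r^2)$ for all $r\ge 0$; since this upper bound is $\simeq_r r^2\tau_{\lambda}^{-1}(r^2)$ and all functions involved lie in $C_+^0$, we get $\limsup_{r\to+\infty}V_{g_{\lambda}}(p_0,r)/(r^2\tau_{\lambda}^{-1}(r^2))<+\infty$. Proposition 4.5 gives $\liminf_{r\to+\infty}V_{g_{\lambda}}(p_0,r)/\big(\theta_{\lambda,P_-}\circ\tau_{\lambda,Q_+}^{-1}(r^2)\big)>0$; since that denominator is also $\simeq_r r^2\tau_{\lambda}^{-1}(r^2)$, replacing it by $r^2\tau_{\lambda}^{-1}(r^2)$ only multiplies the $\liminf$ by a positive constant, so $\liminf_{r\to+\infty}V_{g_{\lambda}}(p_0,r)/(r^2\tau_{\lambda}^{-1}(r^2))>0$. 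This proves the theorem for $p=p_0$. Finally, since $g_{\lambda}$ is hyperk\"ahler it is Ricci-flat, hence has nonnegative Ricci curvature, so Proposition 5.1 gives $V_{g_{\lambda}}(p,r)/V_{g_{\lambda}}(p_0,r)\to 1$ as $r\to+\infty$ for every $p\in X_{\lambda}$; multiplying the estimate just proved by this ratio yields the same two-sided bound for an arbitrary $p$.

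As for difficulty, there is essentially no obstacle left at this stage: all the analytic content — the Riemannian-submersion/Gibbons–Hawking distance estimate of Proposition 3.1, the volume formula of Lemma 2.10 feeding Proposition 3.4, the spherical-average argument of Section 4, and the scaling estimates for $\varphi_{\lambda}$ in Lemma 3.3 — has already been packaged into Propositions 3.5, 4.5 and Lemma 5.4. The only care needed is the bookkeeping: checking that the constants $P_{\pm},Q_{\pm}$ are fed into Lemma 5.4 in an admissible order, and that passing from ``$\simeq_r$ a comparable function'' to the explicit function $r^2\tau_{\lambda}^{-1}(r^2)$ transfers correctly to both the $\liminf$ and the $\limsup$.
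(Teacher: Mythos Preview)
Your proposal is correct and matches the paper's own proof essentially step for step: the paper combines the bounds from Propositions~3.5 and~4.5, invokes Lemma~5.4 to identify both with $\theta_{\lambda}\circ\tau_{\lambda}^{-1}(r^2)$, and then performs exactly your computation $\theta_{\lambda}\circ\tau_{\lambda}^{-1}(r^2)=r^2\tau_{\lambda}^{-1}(r^2)$. The only cosmetic difference is that the paper absorbs the base-point independence into the notation $V_{g_{\lambda}}(r)$ introduced right after Proposition~5.1, whereas you spell out the appeal to Proposition~5.1 explicitly at the end.
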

\begin{proof}
We have shown that
\begin{eqnarray}
\theta_{\lambda,P_-}\circ\tau_{\lambda,Q_+}^{-1}(r^2) \preceq_r V_{g_{\lambda}}(r) \preceq_r \theta_{\lambda,P_+}\circ\tau_{\lambda,Q_-}^{-1}(r^2)\nonumber
\end{eqnarray}
in Sections 3 and 4, and
\begin{eqnarray}
\theta_{\lambda,P_-}\circ\tau_{\lambda,Q_+}^{-1}(r^2) \simeq_r \theta_{\lambda,P_+}\circ\tau_{\lambda,Q_-}^{-1}(r^2) \simeq_r \theta_{\lambda}\circ\tau_{\lambda}^{-1}(r^2)\nonumber
\end{eqnarray}
in Lemma 5.4. Thus we have the assertion from
\begin{eqnarray}
\theta_{\lambda}\circ\tau_{\lambda}^{-1}(r^2) = \tau_{\lambda}^{-1}(r^2)\cdot\tau_{\lambda}(\tau_{\lambda}^{-1}(r^2)) = r^2\tau_{\lambda}^{-1}(r^2).\nonumber
\end{eqnarray}
\end{proof}
\begin{cor}
For $\lambda,\lambda'\in ({\rm Im}\mathbb{H})_0^{\mathbb{Z}}$, the condition $V_{g_{\lambda}}(r) \preceq_r V_{g_{\lambda'}}(r)$ is equivalent to $\varphi_{\lambda}(R) \preceq_R \varphi_{\lambda'}(R)$.
\end{cor}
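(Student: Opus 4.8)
The plan is to deduce the corollary directly from Theorem 5.5, which identifies each volume--growth class explicitly. By that theorem we have $V_{g_\lambda}(r)\simeq_r r^2\tau_\lambda^{-1}(r^2)$ and $V_{g_{\lambda'}}(r)\simeq_r r^2\tau_{\lambda'}^{-1}(r^2)$, where $\tau_\lambda(R)=R\varphi_\lambda(R)$. Since $\preceq_r$ depends only on the $\simeq_r$--class of its arguments, the relation $V_{g_\lambda}(r)\preceq_r V_{g_{\lambda'}}(r)$ is equivalent to $r^2\tau_\lambda^{-1}(r^2)\preceq_r r^2\tau_{\lambda'}^{-1}(r^2)$. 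Cancelling the common positive factor $r^2$ in the defining $\limsup$ and substituting $s=r^2$ (a continuous increasing bijection tending to infinity, which leaves the limit superior at infinity unchanged), this reduces to $\tau_\lambda^{-1}(s)\preceq_s\tau_{\lambda'}^{-1}(s)$. Thus the entire statement reduces to comparing the two inverse functions $\tau_\lambda^{-1}$ and $\tau_{\lambda'}^{-1}$.

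The core of the argument is therefore a lemma asserting that, for the functions at hand, a $\preceq$--comparison of the inverses is equivalent to the corresponding comparison of $\tau_\lambda$ and $\tau_{\lambda'}$ themselves: $\tau_\lambda^{-1}(s)\preceq_s\tau_{\lambda'}^{-1}(s)$ if and only if $\tau_\lambda(R)\preceq_R\tau_{\lambda'}(R)$. Granting this, the corollary follows immediately, because $\tau_\lambda(R)=R\varphi_\lambda(R)$ and $\tau_{\lambda'}(R)=R\varphi_{\lambda'}(R)$ share the factor $R$, so $\tau_\lambda(R)/\tau_{\lambda'}(R)=\varphi_\lambda(R)/\varphi_{\lambda'}(R)$ and hence $\tau_\lambda\preceq_R\tau_{\lambda'}$ is literally the same condition as $\varphi_\lambda\preceq_R\varphi_{\lambda'}$. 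Chaining the equivalences gives
\[ V_{g_\lambda}(r)\preceq_r V_{g_{\lambda'}}(r) \iff \tau_\lambda^{-1}(s)\preceq_s\tau_{\lambda'}^{-1}(s) \iff \tau_\lambda(R)\preceq_R\tau_{\lambda'}(R) \iff \varphi_\lambda(R)\preceq_R\varphi_{\lambda'}(R). \]
To prove the lemma I would use the two scale--invariant estimates already available for $\tau_\lambda$: the monotonicity of $\varphi_\lambda$ gives $\tau_\lambda(\beta R)\ge\beta\tau_\lambda(R)$ for $\beta\ge 1$, while Lemma 3.3, namely $\varphi_\lambda(\alpha R)\le\alpha\varphi_\lambda(R)$, gives the doubling--type bound $\tau_\lambda(\alpha R)\le\alpha^2\tau_\lambda(R)$ for $\alpha\ge 1$. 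Together these let one convert a constant sitting inside the argument of an inverse, such as $\tau_\lambda^{-1}(Cs)$, into a constant multiple of $\tau_\lambda^{-1}(s)$, which is exactly what is needed to carry a comparison through the inversion; the two implications are then obtained symmetrically, each using one of the bounds.

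I expect the inverse--comparison lemma to be the main obstacle. The relation $\preceq$ is a statement about ratios of values, and inversion does not interact with such ratios automatically: without quantitative control one can only locate $\tau_\lambda^{-1}(Cs)$ crudely, and the task is precisely to turn the additive role of the constant $C$ inside the argument into a multiplicative constant outside. The regularity of $\tau_\lambda$ coming from the sublinearity of $\varphi_\lambda$ in Lemma 3.3 is what makes this conversion possible, so I would isolate and prove that lemma first and only then assemble the short reductions at either end. Finally, applying the equivalence with the roles of $\lambda$ and $\lambda'$ interchanged records the companion statement for $\simeq$, so that $V_{g_\lambda}\simeq_r V_{g_{\lambda'}}$ holds exactly when $\varphi_\lambda\simeq_R\varphi_{\lambda'}$.
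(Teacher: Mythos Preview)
Your overall strategy coincides with the paper's: reduce via Theorem~5.5 to a comparison of $\tau_\lambda^{-1}$ with $\tau_{\lambda'}^{-1}$, pass from the inverses to the original functions using monotonicity together with the doubling bound $\tau(\alpha R)\le\alpha^{2}\tau(R)$ coming from Lemma~3.3, and finally cancel the common factor $R$ to reach $\varphi$. The paper carries out exactly these steps, with the same tools.

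There is, however, a genuine slip in your key lemma. Inversion of strictly increasing unbounded functions \emph{reverses} the $\preceq$ relation: one has
\[
\tau_\lambda^{-1}(s)\preceq_s\tau_{\lambda'}^{-1}(s)\quad\Longleftrightarrow\quad \tau_{\lambda'}(R)\preceq_R\tau_{\lambda}(R),
\]
not $\tau_\lambda\preceq_R\tau_{\lambda'}$ as you wrote. Indeed, if $\tau_\lambda^{-1}(s)\le C\,\tau_{\lambda'}^{-1}(s)$ for large $s$ and you set $R=\tau_{\lambda'}^{-1}(s)$, then $\tau_\lambda^{-1}(\tau_{\lambda'}(R))\le CR$, hence $\tau_{\lambda'}(R)\le\tau_\lambda(CR)\le C^{2}\tau_\lambda(R)$ by Lemma~3.3. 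The very estimates you invoke therefore force the opposite inequality to the one you stated; as soon as you try to prove your lemma you will obtain the reversed direction. Running your chain with the corrected middle step gives
\[
V_{g_\lambda}(r)\preceq_r V_{g_{\lambda'}}(r)\ \Longleftrightarrow\ \varphi_{\lambda'}(R)\preceq_R\varphi_{\lambda}(R),
\]
which is precisely what the paper's proof establishes (and which is consistent with the examples in Section~6, where smaller $\varphi$ corresponds to larger volume growth). The printed statement of the corollary has the subscripts on the $\varphi$ side interchanged relative to what the proof actually shows; your proposal inherited that reversal by building it into the inversion lemma.
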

\begin{proof}
The condition $V_{g_{\lambda}}(r) \preceq_r V_{g_{\lambda'}}(r)$ is equivalent to $\tau_{\lambda}^{-1}(r^2) \preceq_r \tau_{\lambda'}^{-1}(r^2)$ from Theorem 5.5.\\
\quad If we assume $\tau_{\lambda}^{-1}(r^2) \preceq_r \tau_{\lambda'}^{-1}(r^2)$ then there are constants $r_0\ge 0$ and $C\ge 1$ which satisfy $\frac{\tau_{\lambda}^{-1}(r^2)}{\tau_{\lambda'}^{-1}(r^2)} \le C$ for all $r\ge r_0$. Now we put $r^2:=\tau_{\lambda}(R)$ and $(r')^2:=\tau_{\lambda'}(R)$ for $R\ge\tau_{\lambda}^{-1}(r_0^2)$. Since we have
\begin{eqnarray}
R = \tau_{\lambda'}^{-1}((r')^2) = \tau_{\lambda}^{-1}(r^2) \le C\tau_{\lambda'}^{-1}(r^2),\nonumber
\end{eqnarray}
then
\begin{eqnarray}
(r')^2\le \tau_{\lambda'}(C\tau_{\lambda'}^{-1}(r^2)) \le C^2\tau_{\lambda'}(\tau_{\lambda'}^{-1}(r^2)) = C^2r^2\nonumber
\end{eqnarray}
is given by the monotonicity of $\tau_{\lambda'}$. Thus we obtain $\frac{\tau_{\lambda'}(R)}{\tau_{\lambda}(R)} \le C^2$ for all $R\ge \tau_{\lambda'}^{-1}(r_0^2)$, which implies $\tau_{\lambda'}(R) \preceq_R \tau_{\lambda}(R)$.\\
\quad On the other hand, if we assume $\tau_{\lambda'}(R) \preceq_R \tau_{\lambda}(R)$ then $\tau_{\lambda}^{-1}(r^2) \preceq_r \tau_{\lambda'}^{-1}(r^2)$ is obtained in the same way. Thus we have the assertion since the condition $\tau_{\lambda'}(R) \preceq_R \tau_{\lambda}(R)$ is equivalent to $\varphi_{\lambda'}(R) \preceq_R \varphi_{\lambda}(R)$.
\end{proof}
\begin{lem}
For all $\lambda\in ({\rm Im}\mathbb{H})_0^{\mathbb{Z}}$, we have
\begin{eqnarray}
\lim_{R\to +\infty}\varphi_{\lambda}(R) = +\infty.\nonumber
\end{eqnarray}
\end{lem}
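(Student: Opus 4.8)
The plan is to use nothing more than the infinitude of the index set $\mathbb{Z}$ together with the elementary fact that each summand $\frac{R}{R+|\lambda_n|}$ increases to $1$ as $R\to+\infty$. Since $\frac{R}{R+|\lambda_n|} = (1+|\lambda_n|/R)^{-1}$ is nondecreasing in $R$, the function $\varphi_{\lambda}$ is nondecreasing, so $\lim_{R\to+\infty}\varphi_{\lambda}(R)$ exists in $[0,+\infty]$ and it is enough to rule out a finite limit.

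First I would fix an arbitrary positive integer $N$ and choose $N$ distinct indices $n_1,\dots,n_N\in\mathbb{Z}$, which is possible since $\mathbb{Z}$ is infinite. Put $M:=\max_{1\le i\le N}|\lambda_{n_i}|$, a finite number. Discarding all the other (nonnegative) terms gives, for every $R>0$,
\begin{eqnarray}
\varphi_{\lambda}(R)\ \ge\ \sum_{i=1}^N\frac{R}{R+|\lambda_{n_i}|}\ \ge\ N\cdot\frac{R}{R+M}.\nonumber
\end{eqnarray}
Letting $R\to+\infty$ yields $\lim_{R\to+\infty}\varphi_{\lambda}(R)\ge N$, and since $N$ was arbitrary the limit must be $+\infty$.

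Alternatively, and perhaps more in keeping with the notation already introduced, one can argue via Lemma 3.2: from $\psi_{\lambda}(R)\ge\sharp N_{\lambda}(R)$ and $\varphi_{\lambda}(R)\ge\frac12\psi_{\lambda}(R)$ one gets $\varphi_{\lambda}(R)\ge\frac12\sharp N_{\lambda}(R)$, where $\sharp N_{\lambda}(R)=\sharp\{n\in\mathbb{Z};\ |\lambda_n|\le R\}$. Because every $|\lambda_n|$ is finite, each index $n$ lies in $N_{\lambda}(R)$ once $R\ge|\lambda_n|$; hence $N_{\lambda}(R)$ exhausts the infinite set $\mathbb{Z}$ and $\sharp N_{\lambda}(R)\to+\infty$ as $R\to+\infty$, which again gives the claim.

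I do not expect any real obstacle here: the statement is a soft counting argument. The only subtlety worth flagging is that the conclusion relies on $\mathbb{Z}$ being infinite rather than on the summability condition $\sum_n(1+|\lambda_n|)^{-1}<+\infty$; the latter forces $|\lambda_n|\to\infty$ but does nothing to prevent the divergence of $\varphi_{\lambda}$, and indeed it is precisely the competition between ``infinitely many terms'' and ``each term $\to 1$'' that makes $\varphi_{\lambda}(R)\to+\infty$.
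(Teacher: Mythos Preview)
Your proposal is correct, and your alternative argument via Lemma~3.2 and $\sharp N_{\lambda}(R)\to+\infty$ is exactly the paper's proof. Your first argument is just a slightly more self-contained version of the same idea, bypassing $\psi_\lambda$ by directly bounding $\varphi_\lambda(R)$ below by a finite partial sum.
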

\begin{proof}
From Lemma 3.2, it is enough to show $\lim_{R\to +\infty}\psi_{\lambda}(R) = +\infty$, which follows directly from $\lim_{R\to +\infty}\sharp N_{\lambda}(R) = +\infty$.
\end{proof}
\begin{lem}
For all $\lambda\in ({\rm Im}\mathbb{H})_0^{\mathbb{Z}}$, we have
\begin{eqnarray}
\lim_{R\to +\infty}\frac{\varphi_{\lambda}(R)}{R} = 0.\nonumber
\end{eqnarray}
\end{lem}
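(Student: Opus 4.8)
The plan is to show that $\varphi_{\lambda}(R)/R \to 0$ by working with the equivalent quantity $\psi_{\lambda}(R)/R$, using Lemma 3.2 which gives $\varphi_{\lambda}(R) \le \psi_{\lambda}(R) \le 2\varphi_{\lambda}(R)$, so it suffices to prove $\psi_{\lambda}(R)/R \to 0$. Recall that
\begin{eqnarray}
\psi_{\lambda}(R) = \sharp N_{\lambda}(R) + \sum_{n\notin N_{\lambda}(R)}\frac{R}{|\lambda_n|}.\nonumber
\end{eqnarray}
Dividing by $R$, the second term becomes $\sum_{n\notin N_{\lambda}(R)}\frac{1}{|\lambda_n|}$, which is the tail of a convergent series: indeed the defining condition $\sum_{n}\frac{1}{1+|\lambda_n|}<+\infty$ forces $|\lambda_n|\to+\infty$ along the sequence, and on $\{n : |\lambda_n|>R\}$ one has $\frac{1}{|\lambda_n|} \le \frac{2}{1+|\lambda_n|}$ once $R\ge 1$, so $\sum_{n\notin N_{\lambda}(R)}\frac{1}{|\lambda_n|}$ is dominated by a tail of the convergent series and hence tends to $0$ as $R\to+\infty$.

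The remaining term is $\frac{\sharp N_{\lambda}(R)}{R}$, and this is where the main work lies. First I would note that $N_{\lambda}(R) = \{n : |\lambda_n|\le R\}$ is a finite set for each $R$ (again because $|\lambda_n|\to\infty$), so $\sharp N_{\lambda}(R)$ is a well-defined nondecreasing step function. The key estimate is that $\sharp N_{\lambda}(R) = o(R)$. To see this, fix $\varepsilon>0$; since $\sum_n \frac{1}{1+|\lambda_n|}$ converges, choose a finite set $F\subset\mathbb{Z}$ with $\sum_{n\notin F}\frac{1}{1+|\lambda_n|}<\varepsilon$. For $n\in N_{\lambda}(R)\setminus F$ we have $\frac{1}{1+|\lambda_n|}\ge \frac{1}{1+R}$, hence
\begin{eqnarray}
\frac{\sharp(N_{\lambda}(R)\setminus F)}{1+R} \le \sum_{n\in N_{\lambda}(R)\setminus F}\frac{1}{1+|\lambda_n|} \le \sum_{n\notin F}\frac{1}{1+|\lambda_n|} < \varepsilon,\nonumber
\end{eqnarray}
so $\sharp N_{\lambda}(R) \le \sharp F + \varepsilon(1+R)$. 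Dividing by $R$ and letting $R\to+\infty$ gives $\limsup_{R\to\infty}\frac{\sharp N_{\lambda}(R)}{R}\le\varepsilon$, and since $\varepsilon>0$ was arbitrary, $\frac{\sharp N_{\lambda}(R)}{R}\to 0$.

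Combining the two pieces, $\frac{\psi_{\lambda}(R)}{R}\to 0$, and therefore $\frac{\varphi_{\lambda}(R)}{R}\le\frac{\psi_{\lambda}(R)}{R}\to 0$, which is the assertion. The only genuine obstacle is the counting estimate $\sharp N_{\lambda}(R)=o(R)$; everything else is bookkeeping with tails of a convergent series. An alternative to the $\varepsilon$-$F$ splitting above would be to write $\sharp N_{\lambda}(R) = \sum_{n\in N_{\lambda}(R)} 1 \le (1+R)\sum_{n\in N_{\lambda}(R)}\frac{1}{1+|\lambda_n|}$ and observe directly that the sum $\sum_{n\in N_{\lambda}(R)}\frac{1}{1+|\lambda_n|}$ tends to $0$ as $R\to\infty$ — but this last claim is false in general (the full sum is a positive constant, not zero), so the splitting off of a finite set $F$ really is needed; that subtlety is the one place to be careful.
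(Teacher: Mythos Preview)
Your proof is correct. The paper's argument is more direct: it works immediately with the identity
\[
\frac{\varphi_\lambda(R)}{R} = \sum_{n\in\mathbb{Z}}\frac{1}{R+|\lambda_n|}
\]
and performs a single $\varepsilon$-splitting into a finite index set $\{|n|\le n_\varepsilon\}$ and its complement, bounding the finite part by $\frac{2n_\varepsilon+1}{R}$ and the tail by $\sum_{|n|>n_\varepsilon}\frac{1}{|\lambda_n|}<\frac{\varepsilon}{2}$. Your detour through $\psi_\lambda$ via Lemma~3.2 is unnecessary and forces you into two separate estimates---the tail $\sum_{n\notin N_\lambda(R)}\frac{1}{|\lambda_n|}\to 0$ and the counting bound $\sharp N_\lambda(R)=o(R)$---each of which itself uses an $\varepsilon$-splitting, whereas the paper's single split handles both at once. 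The underlying idea (split off a finite set whose contribution vanishes after dividing by $R$, control the remainder by a convergent tail) is the same in both proofs; yours just carries an extra layer of packaging.
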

\begin{proof}
For each $\varepsilon >0$ there exists a sufficiently large $n_{\varepsilon}\in\mathbb{Z}_{>0}$ such that $\sum_{|n|>n_{\varepsilon}}\frac{1}{|\lambda_n|} < \frac{\varepsilon}{2}$. Hence we have
\begin{eqnarray}
\frac{\varphi_{\lambda}(R)}{R} &=& \sum_{|n|\le n_{\varepsilon}}\frac{1}{R + |\lambda_n|} + \sum_{|n|>n_{\varepsilon}}\frac{1}{R + |\lambda_n|}\nonumber\\
&\le& \sum_{|n|\le n_{\varepsilon}}\frac{1}{R} + \sum_{|n|>n_{\varepsilon}}\frac{1}{|\lambda_n|}\nonumber\\
&\le& \frac{2n_{\varepsilon} + 1}{R} + \frac{\varepsilon}{2}.\nonumber
\end{eqnarray}
Then the inequality $\frac{\varphi_{\lambda}(R)}{R}\le\varepsilon$ holds for any $R\ge\frac{2(2n_{\varepsilon} + 1)}{\varepsilon}$, which implies $\lim_{R\to +\infty}\frac{\varphi_{\lambda}(R)}{R} \le \varepsilon$. The assertion follows by taking $\varepsilon\to 0$.
\end{proof}
\begin{cor}
For all $\lambda\in ({\rm Im}\mathbb{H})_0^{\mathbb{Z}}$, we have
\begin{eqnarray}
\lim_{r\to +\infty}\frac{V_{g_{\lambda}}(r)}{r^4} = 0,\quad \lim_{r\to +\infty}\frac{V_{g_{\lambda}}(r)}{r^3} = +\infty.\nonumber
\end{eqnarray}
\end{cor}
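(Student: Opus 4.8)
The plan is to deduce the corollary directly from Theorem 5.5 together with the asymptotics of $\varphi_{\lambda}$ recorded in Lemmas 5.7 and 5.8. By Theorem 5.5 there are constants $0<m\le M$ and $r_0\ge 0$ such that $m\, r^2\tau_{\lambda}^{-1}(r^2)\le V_{g_{\lambda}}(p_0,r)\le M\, r^2\tau_{\lambda}^{-1}(r^2)$ for all $r\ge r_0$, where $\tau_{\lambda}(R)=R\,\varphi_{\lambda}(R)$. Since the two assertions of the corollary are invariant under $\simeq_r$, it therefore suffices to compute $\lim_{r\to+\infty} \frac{r^2\tau_{\lambda}^{-1}(r^2)}{r^4}$ and $\lim_{r\to+\infty} \frac{r^2\tau_{\lambda}^{-1}(r^2)}{r^3}$ and to feed the outcome back through this two-sided bound.

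First I would perform the change of variable $R:=\tau_{\lambda}^{-1}(r^2)$, so that $r^2=\tau_{\lambda}(R)=R\,\varphi_{\lambda}(R)$. Because $\tau_{\lambda}$ is continuous, strictly increasing, and tends to $+\infty$ (indeed $\tau_{\lambda}(R)=R\varphi_{\lambda}(R)\ge\varphi_{\lambda}(R)\to+\infty$ for $R\ge 1$ by Lemma 5.7, using that $\varphi_{\lambda}$ is nondecreasing and positive), the limit $R\to+\infty$ is equivalent to $r\to+\infty$. Then $\frac{r^2\tau_{\lambda}^{-1}(r^2)}{r^4}=\frac{\tau_{\lambda}^{-1}(r^2)}{r^2}=\frac{R}{R\varphi_{\lambda}(R)}=\frac{1}{\varphi_{\lambda}(R)}$, which converges to $0$ by Lemma 5.7; and $\frac{r^2\tau_{\lambda}^{-1}(r^2)}{r^3}=\frac{\tau_{\lambda}^{-1}(r^2)}{r}=\frac{R}{\sqrt{R\varphi_{\lambda}(R)}}=\sqrt{\frac{R}{\varphi_{\lambda}(R)}}$, which diverges to $+\infty$ by Lemma 5.8. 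Combining the first limit with the upper bound $V_{g_{\lambda}}(p_0,r)\le M\,r^2\tau_{\lambda}^{-1}(r^2)$ (and $V_{g_{\lambda}}\ge 0$) yields $V_{g_{\lambda}}(r)/r^4\to 0$; combining the second with the lower bound $V_{g_{\lambda}}(p_0,r)\ge m\,r^2\tau_{\lambda}^{-1}(r^2)$ yields $V_{g_{\lambda}}(r)/r^3\to+\infty$.

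There is no serious obstacle here: the argument is routine once Theorem 5.5 and Lemmas 5.7--5.8 are available. The only point requiring a little care is that the relation $\simeq_r$ a priori controls only the $\limsup$ and $\liminf$, so one should work with the genuine inequalities $m\,r^2\tau_{\lambda}^{-1}(r^2)\le V_{g_{\lambda}}(p_0,r)\le M\,r^2\tau_{\lambda}^{-1}(r^2)$ rather than the equivalence class alone, and then simply observe that a bounded sequence times one tending to $0$ tends to $0$, while a sequence bounded below by a positive constant times one tending to $+\infty$ tends to $+\infty$.
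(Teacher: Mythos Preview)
Your proof is correct and follows essentially the same route as the paper: reduce to the limits of $r^2\tau_{\lambda}^{-1}(r^2)/r^4$ and $r^2\tau_{\lambda}^{-1}(r^2)/r^3$ via Theorem~5.5, substitute $R=\tau_{\lambda}^{-1}(r^2)$ to rewrite these as $1/\varphi_{\lambda}(R)$ and $\sqrt{R/\varphi_{\lambda}(R)}$, and conclude by Lemmas~5.7 and~5.8. Your extra care in passing from the equivalence $\simeq_r$ to genuine two-sided bounds is a nice clarification but not a departure from the paper's argument.
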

\begin{proof}
It suffices to show that
\begin{eqnarray}
\lim_{r\to +\infty}\frac{\theta_{\lambda}\circ\tau_{\lambda}^{-1}(r^2)}{r^4} = 0,\quad \lim_{r\to +\infty}\frac{\theta_{\lambda}\circ\tau_{\lambda}^{-1}(r^2)}{r^3} = +\infty.\nonumber
\end{eqnarray}
We put $R = \tau_{\lambda}^{-1}(r^2)$ and consider the limit of $R\to +\infty$. Then we have
\begin{eqnarray}
\frac{\theta_{\lambda}\circ\tau_{\lambda}^{-1}(r^2)}{r^4} = \frac{\theta_{\lambda}(R)}{\tau_{\lambda}(R)^2} = \frac{1}{\varphi_{\lambda}(R)},\nonumber
\end{eqnarray}
and
\begin{eqnarray}
\frac{\theta_{\lambda}\circ\tau_{\lambda}^{-1}(r^2)}{r^3} = \frac{\theta_{\lambda}(R)}{\tau_{\lambda}(R)^{\frac{3}{2}}} = \sqrt{\frac{R}{\varphi_{\lambda}(R)}}.\nonumber
\end{eqnarray}
Hence we obtain
\begin{eqnarray}
\lim_{r\to +\infty}\frac{\theta_{\lambda}\circ\tau_{\lambda}^{-1}(r^2)}{r^4} &=& \lim_{R\to +\infty}\frac{1}{\varphi_{\lambda}(R)} = 0,\nonumber\\
\lim_{r\to +\infty}\frac{\theta_{\lambda}\circ\tau_{\lambda}^{-1}(r^2)}{r^3} &=& \lim_{R\to +\infty}\sqrt{\frac{R}{\varphi_{\lambda}(R)}} = +\infty.\nonumber
\end{eqnarray}
from Lemmas 5.7 and 5.8.
\end{proof}

The condition $\varphi_{\lambda}(R)\preceq_R \varphi_{\lambda'}(R)$ is rather difficult to check. But we can describe the sufficient condition for $\varphi_{\lambda}(R)\preceq_R \varphi_{\lambda'}(R)$ easier as follows.
\begin{prop}
Let $\lambda,\lambda'\in ({\rm Im}\mathbb{H})_0^{\mathbb{Z}}$. Suppose that there are some $\alpha >0$ and $R_0>0$ such that $\sharp N_{\lambda}(R) \le \sharp N_{\alpha\lambda'}(R)$ for $R\ge R_0$, where $\alpha\lambda':=(\alpha\lambda'_n)_{n\in\mathbb{Z}}$. Then we have $\varphi_{\lambda}(R)\preceq_R \varphi_{\lambda'}(R)$.
\end{prop}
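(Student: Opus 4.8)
The plan is to reduce the comparison of $\varphi_{\lambda}$ and $\varphi_{\lambda'}$ to a comparison of the counting functions $\sharp N_{\lambda}$, routed through the auxiliary functions $\psi_{\lambda}$ and the sandwich $\varphi\le\psi\le 2\varphi$ of Lemma 3.2.

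First I would dispose of the scaling factor $\alpha$. Directly from the definitions one has $\varphi_{\alpha\lambda'}(R)=\varphi_{\lambda'}(R/\alpha)$ and $\sharp N_{\alpha\lambda'}(R)=\sharp N_{\lambda'}(R/\alpha)$. When $\alpha\ge 1$ the monotonicity of $\varphi_{\lambda'}$ gives $\varphi_{\alpha\lambda'}(R)\le\varphi_{\lambda'}(R)$, and when $\alpha<1$ Lemma 3.3 applied to $1/\alpha\ge 1$ gives $\varphi_{\alpha\lambda'}(R)=\varphi_{\lambda'}((1/\alpha)R)\le(1/\alpha)\varphi_{\lambda'}(R)$; in either case $\varphi_{\alpha\lambda'}(R)\preceq_R\varphi_{\lambda'}(R)$. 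Hence it suffices to prove $\varphi_{\lambda}(R)\preceq_R\varphi_{\alpha\lambda'}(R)$, and by Lemma 3.2 it suffices to prove $\psi_{\lambda}(R)\preceq_R\psi_{\alpha\lambda'}(R)$.

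The heart of the argument is the identity
\begin{eqnarray}
\psi_{\lambda}(R) = R\int_R^{\infty}\frac{\sharp N_{\lambda}(s)}{s^2}\,ds,\nonumber
\end{eqnarray}
valid for every $\lambda\in({\rm Im}\mathbb{H})_0^{\mathbb{Z}}$ and $R>0$. To establish it, I would first note that $\psi_{\lambda}(R)$ is finite (both $\sharp N_{\lambda}(R)$ and $\sum_{n\notin N_{\lambda}(R)}R/|\lambda_n|$ are at most $(R+1)\sum_{n\in\mathbb{Z}}(1+|\lambda_n|)^{-1}$), then write $\sum_{n\notin N_{\lambda}(R)}|\lambda_n|^{-1}$ as the Stieltjes integral $\int_{(R,\infty)}s^{-1}\,d(\sharp N_{\lambda})(s)$ and integrate by parts. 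The boundary contribution at $s=R$ is exactly $-\sharp N_{\lambda}(R)$, which cancels the leading term of $\psi_{\lambda}$, while the boundary contribution at $s=\infty$ vanishes because $\sharp N_{\lambda}(s)/s\to 0$ as $s\to\infty$; the latter is immediate from the proof of Lemma 5.8, since $\varphi_{\lambda}(R)/R\ge\sharp N_{\lambda}(R)/(2R)$. (One may instead avoid Stieltjes integrals by decomposing $\{n:|\lambda_n|>R\}$ into the dyadic shells $2^kR<|\lambda_n|\le 2^{k+1}R$; this yields $\psi_{\lambda}(R)\le C\sum_{k\ge 0}2^{-k}\sharp N_{\lambda}(2^kR)$ and $\psi_{\alpha\lambda'}(R)\ge C^{-1}\sum_{k\ge 0}2^{-k}\sharp N_{\alpha\lambda'}(2^kR)$ for an absolute constant $C$, which is just as good for $\preceq_R$.)

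Granting the identity, the conclusion is immediate: for $R\ge R_0$ the hypothesis gives $\sharp N_{\lambda}(s)\le\sharp N_{\alpha\lambda'}(s)$ for every $s\ge R$, hence
\begin{eqnarray}
\psi_{\lambda}(R) = R\int_R^{\infty}\frac{\sharp N_{\lambda}(s)}{s^2}\,ds \le R\int_R^{\infty}\frac{\sharp N_{\alpha\lambda'}(s)}{s^2}\,ds = \psi_{\alpha\lambda'}(R),\nonumber
\end{eqnarray}
and combining with Lemma 3.2 we get $\varphi_{\lambda}(R)\le\psi_{\lambda}(R)\le\psi_{\alpha\lambda'}(R)\le 2\varphi_{\alpha\lambda'}(R)$ for all $R\ge R_0$, so $\varphi_{\lambda}(R)\preceq_R\varphi_{\alpha\lambda'}(R)\preceq_R\varphi_{\lambda'}(R)$ by the first reduction. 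The only delicate point in this scheme is the integral identity — specifically the justification of the integration by parts, which hinges on the sublinear growth of $\sharp N_{\lambda}$ so that no mass escapes to $s=\infty$; the dyadic variant sidesteps this at the cost of harmless absolute constants, and everything else is bookkeeping.
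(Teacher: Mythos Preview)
Your proof is correct and follows a genuinely different route from the paper's. The paper reorders both sequences by increasing modulus via bijections $a,b:\mathbb{Z}_{>0}\to\mathbb{Z}$, deduces from the hypothesis that $|\alpha\lambda'_{b(n)}|\le|\lambda_{a(n)}|$ for all large $n$, and then compares $\psi_\lambda$ with $\psi_{\alpha\lambda'}$ by splitting the index set into $a^{-1}(N_\lambda(R))$, $b^{-1}(N_{\alpha\lambda'}(R))\setminus a^{-1}(N_\lambda(R))$, and the complement, bounding each piece term by term. Your argument instead encodes $\psi_\lambda$ as the integral transform $\psi_\lambda(R)=R\int_R^\infty s^{-2}\,\sharp N_\lambda(s)\,ds$, after which the comparison is immediate from the hypothesis. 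This identity makes explicit that $\psi_\lambda$ depends on $\lambda$ only through the counting function, and it can in fact be verified directly by Tonelli---interchange the sum and the integral in $\sum_n R\int_{\max(R,|\lambda_n|)}^\infty s^{-2}\,ds=\sum_n\min(1,R/|\lambda_n|)$---without any Stieltjes integration by parts, eliminating the one step you flag as delicate. The paper's approach is purely combinatorial and avoids integrals altogether; yours is shorter, more conceptual, and yields a reusable formula. Your explicit reduction of the scaling factor $\alpha$ at the outset is also cleaner than the paper's unproved assertion $\psi_{\lambda'}(R)\simeq_R\psi_{\alpha\lambda'}(R)$ at the end.
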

\begin{proof}
Take bijections $a,b:\mathbb{Z}_{>0}\to\mathbb{Z}$ to be $|\lambda_{a(n)}|\le |\lambda_{a(n+1)}|$, $|\lambda'_{b(n)}|\le |\lambda'_{b(n+1)}|$ for each $n\in\mathbb{Z}_{>0}$. Then the condition $\sharp N_{\lambda}(R) \le \sharp N_{\alpha\lambda'}(R)$ is equivalent to $a^{-1}(N_{\lambda}(R)) \subset b^{-1}(N_{\alpha\lambda'}(R))$.\\
\quad Take $n_0\in\mathbb{Z}_{>0}$ sufficiently large such that $|\lambda_{a(n_0)}|\ge R_0$. Then we have $|\alpha\lambda'_{b(n)}|\le |\lambda_{a(n)}|$ for each $n\ge n_0$ from
\begin{eqnarray}
a^{-1}(N_{\lambda}(|\lambda_{a(n)}|)) \subset b^{-1}(N_{\alpha\lambda'}(|\lambda_{a(n)}|)).\nonumber
\end{eqnarray}
If $n$ is an element of $b^{-1}(N_{\alpha\lambda'}(R))\backslash a^{-1}(N_{\lambda}(R))$ for $R\ge R_0$, then we have
\begin{eqnarray}
\frac{1}{|\lambda_{a(n)}|}\le\frac{1}{R},\quad \frac{1}{|\lambda_{a(n)}|}\le\frac{1}{\alpha|\lambda'_{b(n)}|}.\nonumber
\end{eqnarray}
Thus we have
\begin{eqnarray}
\psi_{\lambda}(R) &=& \sharp N_{\lambda}(R) + \sum_{n\notin a^{-1}(N_{\lambda}(R))}\frac{R}{|\lambda_{a(n)}|}\nonumber\\
&=& \sharp N_{\lambda}(R) + \sum_{n\in b^{-1}(N_{\alpha\lambda'}(R))\backslash a^{-1}(N_{\lambda}(R)^c)}\frac{R}{|\lambda_{a(n)}|} + \sum_{n\notin b^{-1}(N_{\alpha\lambda'}(R))}\frac{R}{|\lambda_{a(n)}|}\nonumber\\
&\le& \sharp N_{\lambda}(R) + (\sharp N_{\alpha\lambda'}(R) - \sharp N_{\lambda}(R)) + \sum_{n\notin b^{-1}(N_{\alpha\lambda'}(R))}\frac{R}{\alpha|\lambda'_{b(n)}|}\nonumber\\
&=& \psi_{\alpha\lambda'}(R)\nonumber
\end{eqnarray}
for $R\ge R_0$. From $\psi_{\lambda'}(R)\simeq_R \psi_{\alpha\lambda'}(R)$, we have $\psi_{\lambda}(R)\preceq_R \psi_{\lambda'}(R)$ which is equivalent to $\varphi_{\lambda}(R)\preceq_R \varphi_{\lambda'}(R)$.
\end{proof}
\section{Examples}
\quad In this section we evaluate $V_{g_{\lambda}}(r)$ concretely for some $\lambda\in ({\rm Im}\mathbb{H})_0^{\mathbb{Z}}$. We fix a bijection $a:\mathbb{Z}_{>0}\to\mathbb{Z}$ and identify $\mathbb{Z}$ with $\mathbb{Z}_{>0}$ throughout this section.\\
\quad Now we fix $\lambda\in ({\rm Im}\mathbb{H})_0^{\mathbb{Z}}$. Assume that $|\lambda_{a(n)}|$ is nondecreasing with respect to $n\in\mathbb{Z}_{>0}$ and there exists a continuous nondecreasing function $\lambda_{\mathbb{R}}:\mathbb{R}_{\ge 0}\to\mathbb{R}_{\ge 0}$ which satisfies $|\lambda_{a(n)}|=\lambda_{\mathbb{R}}(n)$. Then a function $\hat{\varphi}_{\lambda_{\mathbb{R}}}(R):=\int^{\infty}_{0}\frac{Rdx}{R+\lambda_{\mathbb{R}}(x)}$ is strictly increasing and satisfies $\hat{\varphi}_{\lambda_{\mathbb{R}}}(R) \simeq_R \varphi_{\lambda}(R)$. In this case we have $V_{g_{\lambda}}(r)\simeq_r r^2\hat{\tau}_{\lambda_{\mathbb{R}}}^{-1}(r^2)$ where $\hat{\tau}_{\lambda_{\mathbb{R}}}$ is defined by $\hat{\tau}_{\lambda_{\mathbb{R}}}(R):=R\hat{\varphi}_{\lambda_{\mathbb{R}}}(R)$. Now we compute the volume growth of $g_{\lambda}$ in the following two cases.\\
\\
\quad {\bf 1.} Fix $\alpha >1$ and put $\lambda_{\mathbb{R}}(x)=x^{\alpha}$, $\lambda_{a(n)}=\lambda_{\mathbb{R}}(n)i$. Then $\hat{\varphi}_{\lambda_{\mathbb{R}}}$ is given by
\begin{eqnarray}
\hat{\varphi}_{\lambda_{\mathbb{R}}}(R) = \int^{\infty}_{0}\frac{Rdx}{R+x^{\alpha}} = R^{\frac{1}{\alpha}}\int^{\infty}_{0}\frac{dy}{1+y^{\alpha}},\nonumber
\end{eqnarray}
where we put $y=\frac{x}{R^{\frac{1}{\alpha}}}$. Since $\int^{\infty}_{0}\frac{dy}{1+y^{\alpha}}$ is a constant, we have $\hat{\varphi}_{\lambda_{\mathbb{R}}}(R)\simeq_R R^{\frac{1}{\alpha}}$, which gives $\hat{\tau}_{\lambda_{\mathbb{R}}}(R)\simeq_R R^{1 + \frac{1}{\alpha}}$ and $\hat{\tau}_{\lambda_{\mathbb{R}}}^{-1}(r^2)\simeq_r r^{\frac{2\alpha}{\alpha + 1}}$. Hence the volume growth is given by
\begin{eqnarray}
V_{g_{\lambda}}(r)\simeq_r r^{4 - \frac{2}{\alpha + 1}}.\nonumber
\end{eqnarray}
Thus we obtain the following result.
\begin{thm}
There exists a complete $4$-dimensional hyperk\"ahler manifold $(X_{\lambda},g_{\lambda})$ for each $3 <\alpha <4$ whose volume growth is given by
\begin{eqnarray}
V_{g_{\lambda}}(r)\simeq_r r^{\alpha}.\nonumber
\end{eqnarray}
\end{thm}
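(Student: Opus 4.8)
The plan is to obtain Theorem 6.1 as an immediate specialization of the Case 1 computation carried out just above. Recall that for $\lambda_{a(n)} = n^{\beta} i$ with $\beta > 1$ (here $\beta$ plays the role of the exponent denoted $\alpha$ in Case 1 above) one has $V_{g_{\lambda}}(r) \simeq_r r^{4 - 2/(\beta+1)}$. So, given a target exponent $\alpha \in (3,4)$, I would solve $4 - \frac{2}{\beta+1} = \alpha$, which yields $\beta = \frac{\alpha-2}{4-\alpha}$. The first step is to check that this is an admissible Case 1 exponent: since $4-\alpha > 0$ we get $\beta > 1 \iff \alpha - 2 > 4 - \alpha \iff \alpha > 3$, while $\beta < +\infty \iff \alpha < 4$; thus the hypothesis $3 < \alpha < 4$ is exactly what makes $\beta$ lie in $(1,+\infty)$.

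Next I would carry out Case 1 with this $\beta$. Fix a bijection $a : \mathbb{Z}_{>0} \to \mathbb{Z}$ and set $\lambda_{a(n)} := n^{\beta} i$. One verifies $\lambda \in ({\rm Im}\mathbb{H})_0^{\mathbb{Z}}$ because $\sum_{n \ge 1} \frac{1}{1 + n^{\beta}} < +\infty$ for $\beta > 1$; moreover $|\lambda_{a(n)}| = n^{\beta}$ is nondecreasing and is interpolated by the continuous nondecreasing function $\lambda_{\mathbb{R}}(x) = x^{\beta}$, so the reductions set up at the start of this section apply. In particular $\hat{\varphi}_{\lambda_{\mathbb{R}}}(R) \simeq_R \varphi_{\lambda}(R)$ — obtained by sandwiching $\sum_n \frac{R}{R + n^{\beta}}$ between integrals of the nonincreasing function $x \mapsto \frac{R}{R + x^{\beta}}$ — hence $\tau_{\lambda}(R) \simeq_R \hat{\tau}_{\lambda_{\mathbb{R}}}(R)$, and running the monotone-inverse argument exactly as in the proof of Corollary 5.6 gives $\tau_{\lambda}^{-1}(r^2) \simeq_r \hat{\tau}_{\lambda_{\mathbb{R}}}^{-1}(r^2)$. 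Combined with Theorem 5.5 this yields $V_{g_{\lambda}}(r) \simeq_r r^2 \hat{\tau}_{\lambda_{\mathbb{R}}}^{-1}(r^2)$.

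Finally I would evaluate the right-hand side. The substitution $y = x R^{-1/\beta}$ gives $\hat{\varphi}_{\lambda_{\mathbb{R}}}(R) = R^{1/\beta} \int_0^{\infty} \frac{dy}{1 + y^{\beta}}$, and for $\beta > 1$ the integral is a finite positive constant, so $\hat{\varphi}_{\lambda_{\mathbb{R}}}(R) \simeq_R R^{1/\beta}$, $\hat{\tau}_{\lambda_{\mathbb{R}}}(R) \simeq_R R^{1 + 1/\beta}$, and $\hat{\tau}_{\lambda_{\mathbb{R}}}^{-1}(r^2) \simeq_r r^{2\beta/(\beta+1)}$. Therefore $V_{g_{\lambda}}(r) \simeq_r r^{2 + 2\beta/(\beta+1)} = r^{4 - 2/(\beta+1)} = r^{\alpha}$ by the choice of $\beta$, and completeness of $g_{\lambda}$ is Theorem 2.7, which is the assertion. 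I do not expect a genuine obstacle here: all the hard analysis lives in Sections 3--5, and the only points needing a line or two of care are the two equivalences used above, namely $\hat{\varphi}_{\lambda_{\mathbb{R}}} \simeq_R \varphi_{\lambda}$ and the passage from an equivalence of the $\tau$'s to an equivalence of their inverses, both of which are routine integral-comparison and monotonicity arguments of precisely the kind already appearing in the proof of Corollary 5.6.
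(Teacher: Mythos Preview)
Your proposal is correct and follows essentially the same approach as the paper: both take $\lambda_{a(n)} = n^{\beta}i$ with $\beta>1$, compute $\hat{\varphi}_{\lambda_{\mathbb{R}}}(R)\simeq_R R^{1/\beta}$ via the substitution $y = xR^{-1/\beta}$, and deduce $V_{g_{\lambda}}(r)\simeq_r r^{4-2/(\beta+1)}$, then observe that as $\beta$ ranges over $(1,+\infty)$ the exponent ranges over $(3,4)$. You are somewhat more explicit than the paper in solving for $\beta$ in terms of the target $\alpha$, in verifying $\lambda\in({\rm Im}\mathbb{H})_0^{\mathbb{Z}}$, and in justifying the two equivalences $\hat{\varphi}_{\lambda_{\mathbb{R}}}\simeq_R\varphi_{\lambda}$ and $\tau_{\lambda}^{-1}\simeq_r\hat{\tau}_{\lambda_{\mathbb{R}}}^{-1}$, which the paper simply asserts at the start of Section~6; but the substance of the argument is identical.
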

{\bf 2.} Fix $\alpha >0$ and put $\lambda_{\mathbb{R}}(x)=e^{\alpha x}$, $\lambda_{a(n)}=\lambda_{\mathbb{R}}(n)i$. Then $\hat{\varphi}_{\lambda_{\mathbb{R}}}$ is given by
\begin{eqnarray}
\hat{\varphi}_{\lambda_{\mathbb{R}}}(R) = \int^{\infty}_{0}\frac{Rdx}{R+e^{\alpha x}}.\nonumber
\end{eqnarray}
By putting $y=e^{\alpha x}$, we have
\begin{eqnarray}
\hat{\varphi}_{\lambda_{\mathbb{R}}}(R) &=& \int^{\infty}_{1}\frac{Rdy}{\alpha y(y + R)}\nonumber\\
&=& \frac{1}{\alpha}\log (R+1).\nonumber
\end{eqnarray}
Hence we have $\hat{\varphi}_{\lambda_{\mathbb{R}}}(R) = \frac{1}{\alpha}\log (R+1)$ and $\hat{\tau}_{\lambda_{\mathbb{R}}}(R)=  \frac{1}{\alpha}R\log (R+1)$.
\begin{prop}
Let $\lambda\in ({\rm Im}\mathbb{H})_0^{\mathbb{Z}}$ be as above. Then the volume growth of $g_{\lambda}$ satisfies $V_{g_{\lambda}}(r)\simeq_r \frac{r^4}{\log r}$.
\end{prop}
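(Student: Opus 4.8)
The plan is to combine the explicit expression $\hat{\tau}_{\lambda_{\mathbb{R}}}(R) = \frac{1}{\alpha}R\log(R+1)$ just obtained with the general principle recorded at the beginning of this section, namely $V_{g_{\lambda}}(r) \simeq_r r^2\hat{\tau}_{\lambda_{\mathbb{R}}}^{-1}(r^2)$ (which itself rests on $\hat{\varphi}_{\lambda_{\mathbb{R}}}(R)\simeq_R\varphi_{\lambda}(R)$ and Theorem 5.5). Thus the entire content of the proposition is an asymptotic analysis of the inverse function $\hat{\tau}_{\lambda_{\mathbb{R}}}^{-1}$: the point is that $R\log(R+1)$ inverts, up to the logarithmic factor, like a square root.

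First I would set $R := \hat{\tau}_{\lambda_{\mathbb{R}}}^{-1}(r^2)$, so that $r^2 = \frac{1}{\alpha}R\log(R+1)$, and observe that $R \to +\infty$ as $r \to +\infty$ since $\hat{\tau}_{\lambda_{\mathbb{R}}}$ is strictly increasing and unbounded. Then I would form the ratio
\[
\frac{r^2\,\hat{\tau}_{\lambda_{\mathbb{R}}}^{-1}(r^2)}{r^4/\log r} = \frac{R\log r}{r^2} = \frac{\alpha\log r}{\log(R+1)},
\]
so that proving $V_{g_{\lambda}}(r) \simeq_r r^4/\log r$ reduces to showing that $\log r/\log(R+1)$ converges to a finite positive limit as $r \to +\infty$.

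To see this I would substitute the defining relation into $\log r$: from $r^2 = \frac{1}{\alpha}R\log(R+1)$ one gets
\[
\log r = \frac{1}{2}\left(\log R + \log\log(R+1) - \log\alpha\right).
\]
Dividing by $\log(R+1)$ and sending $R \to +\infty$, the term $\log R/\log(R+1)$ tends to $1$ while $\log\log(R+1)/\log(R+1)$ and $\log\alpha/\log(R+1)$ tend to $0$; hence $\log r/\log(R+1) \to 1/2$. Therefore the displayed ratio tends to $\alpha/2 \in (0,+\infty)$, which gives both the $\limsup$ and the $\liminf$ bounds and so $r^2\hat{\tau}_{\lambda_{\mathbb{R}}}^{-1}(r^2) \simeq_r r^4/\log r$; combined with $V_{g_{\lambda}}(r) \simeq_r r^2\hat{\tau}_{\lambda_{\mathbb{R}}}^{-1}(r^2)$ this is the claim. (To be pedantic, $r^4/\log r$ is not literally an element of $C_+^0$; one replaces it by any nondecreasing continuous function agreeing with it for large $r$, which does not affect the equivalence class.)

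I do not anticipate a genuine obstacle here: the only mild point is confirming that the nested logarithm $\log\log(R+1)$ is negligible against $\log(R+1)$, which is immediate. All the substantive work — identifying $V_{g_{\lambda}}(r)$ with $r^2\hat{\tau}_{\lambda_{\mathbb{R}}}^{-1}(r^2)$ and evaluating the integral defining $\hat{\varphi}_{\lambda_{\mathbb{R}}}$ — has already been carried out above.
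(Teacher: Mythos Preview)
Your proof is correct and follows essentially the same route as the paper: set $R=\hat{\tau}_{\lambda_{\mathbb{R}}}^{-1}(r^2)$, rewrite $\log r$ via $r^2=\frac{1}{\alpha}R\log(R+1)$, and compute that the ratio tends to $\alpha/2$. The intermediate simplification to $\alpha\log r/\log(R+1)$ and the remark about $r^4/\log r$ not literally lying in $C_+^0$ are your own additions, but the argument is otherwise the paper's.
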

\begin{proof}
It is enough to see the behavior of $\frac{r^2\hat{\tau}_{\lambda_{\mathbb{R}}}^{-1}(r^2)\log r}{r^4}$ at $r\to +\infty$. Put $R=\hat{\tau}_{\lambda_{\mathbb{R}}}^{-1}(r^2)$. Then we have $r^2= \frac{1}{\alpha}R\log (R+1)$ and $\log r= \frac{1}{2}(\log R + \log \log (R+1) - \log\alpha)$. Thus we have
\begin{eqnarray}
\lim_{r\to +\infty}\frac{r^2\hat{\tau}_{\lambda_{\mathbb{R}}}^{-1}(r^2)\log r}{r^4} &=& \lim_{R\to +\infty}\frac{\alpha}{2}\frac{R(\log R + \log \log (R+1) - \log\alpha)}{R\log (R+1)}\nonumber\\
&=& \frac{\alpha}{2}.\nonumber
\end{eqnarray}
\end{proof}
Thus we have the following theorem.
\begin{thm}
There exists a complete $4$-dimensional hyperk\"ahler manifold $(X_{\lambda},g_{\lambda})$ whose volume growth satisfies
\begin{eqnarray}
\lim_{r\to +\infty}\frac{V_{g_{\lambda}}(r)}{r^4} = 0,\quad \lim_{r\to +\infty}\frac{V_{g_{\lambda}}(r)}{r^{\alpha}} = +\infty\nonumber
\end{eqnarray}
for any $\alpha <4$.
\end{thm}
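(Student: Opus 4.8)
The plan is to obtain the statement as an immediate consequence of Proposition 6.3. First I would exhibit a concrete $\lambda$ of the type treated in Case 2 above: fixing the bijection $a:\mathbb{Z}_{>0}\to\mathbb{Z}$, set $\lambda_{a(n)}:=e^{n}i$ for all $n\in\mathbb{Z}_{>0}$, i.e. take the construction parameter of Case 2 to be $1$, so that $\lambda_{\mathbb{R}}(x)=e^{x}$. Since $|\lambda_{a(n)}|=e^{n}$, one has $\sum_{n\in\mathbb{Z}}\frac{1}{1+|\lambda_n|}=\sum_{n\ge 1}\frac{1}{1+e^{n}}<+\infty$, so $\lambda\in({\rm Im}\mathbb{H})_0^{\mathbb{Z}}$; moreover $|\lambda_{a(n)}|$ is strictly increasing, so the $\lambda_n$ are pairwise distinct, hence $\lambda$ is generic and $(X_{\lambda},g_{\lambda})$ is a smooth complete $4$-dimensional hyperk\"ahler manifold by Theorems 2.5 and 2.8. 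This $\lambda$ satisfies the hypotheses of Case 2.

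Next I would apply Proposition 6.3 to this $\lambda$, which yields $V_{g_{\lambda}}(r)\simeq_r r^4/\log r$. Unwinding the definition of $\simeq_r$, this means there are constants $0<c\le C<+\infty$ such that
\begin{eqnarray}
c\ \le\ \liminf_{r\to+\infty}\frac{V_{g_{\lambda}}(p_0,r)\log r}{r^4}\ \le\ \limsup_{r\to+\infty}\frac{V_{g_{\lambda}}(p_0,r)\log r}{r^4}\ \le\ C\nonumber
\end{eqnarray}
for every $p_0\in X_{\lambda}$ (the bounds being basepoint-independent by Proposition 5.1).

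Finally I would read off the two limits. Since $\frac{V_{g_{\lambda}}(p_0,r)}{r^4}\le\frac{2C}{\log r}$ for all large $r$ and the left-hand side is nonnegative, $\lim_{r\to+\infty}V_{g_{\lambda}}(r)/r^4=0$. For $\alpha<4$ we have $\frac{V_{g_{\lambda}}(p_0,r)}{r^{\alpha}}=\frac{V_{g_{\lambda}}(p_0,r)}{r^4}\,r^{4-\alpha}\ge\frac{(c/2)\,r^{4-\alpha}}{\log r}$ for all large $r$; since $4-\alpha>0$ and $r^{\beta}/\log r\to+\infty$ for every $\beta>0$, we obtain $\lim_{r\to+\infty}V_{g_{\lambda}}(r)/r^{\alpha}=+\infty$.

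There is essentially no obstacle here: all the substantive work has already been done in Proposition 6.3 (and, behind it, Theorem 5.5 together with the computation $\hat{\varphi}_{\lambda_{\mathbb{R}}}(R)=\frac{1}{\alpha}\log(R+1)$). The only minor point requiring attention is that $\simeq_r$ encodes merely a two-sided asymptotic bound and not a genuine asymptotic equivalence, so the two limits must be extracted from the displayed inequalities above together with the elementary growth fact $r^{\beta}/\log r\to+\infty$, rather than by literally substituting $r^4/\log r$ for $V_{g_{\lambda}}(r)$.
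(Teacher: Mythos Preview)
Your proposal is correct and follows exactly the paper's approach: the paper derives this theorem immediately from the preceding proposition (Proposition~6.2 in the paper's numbering, not 6.3---the statement you are proving is itself Theorem~6.3), which gives $V_{g_{\lambda}}(r)\simeq_r r^4/\log r$ for the exponential choice of~$\lambda$; your added verification that $\lambda$ is generic and the explicit extraction of the two limits from the $\simeq_r$ bounds merely spell out what the paper leaves implicit in the words ``Thus we have the following theorem.''
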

\section{Taub-NUT deformations}
\quad We consider the volume growth of the Taub-NUT deformations of $(X_{\lambda},g_{\lambda})$ in this section.\\
\quad First of all, we define the Taub-NUT deformations of hyperk\"ahler manifolds with tri-Hamiltonian $S^1$-actions. Let $(X,\omega)$ be a hyperk\"ahler manifold of dimension $4$ with tri-Hamiltonian $S^1$-action, and $\mu:X\to {\rm Im}\mathbb{H}$ be the hyperk\"ahler moment map. An action of $\mathbb{R}$ on $\mathbb{H}$ is defined by $x\mapsto x+\sqrt{s}^{-1} t$ for $x\in\mathbb{H}$ and $t\in\mathbb{R}$ by fixing a constant $s >0$. This action preserves the standard hyperk\"ahler structure on $\mathbb{H}$ and the hyperk\"ahler moment map is given by $\sqrt{s}^{-1}\cdot Im:\mathbb{H}\to {\rm Im}\mathbb{H}$. Then we obtain the hyperk\"ahler quotient with respect to the action of $\mathbb{R}$ on $X\times\mathbb{H}$, that is, the quotient $(\mu^{(s)})^{-1}(\zeta)/\mathbb{R}$ where $\zeta$ is an element of ${\rm Im}\mathbb{H}$ and $\mu^{(s)}:X\times\mathbb{H}\to {\rm Im}\mathbb{H}$ is defined by $\mu^{(s)}(x,y):=\mu(x) + 2\sqrt{s}^{-1} Im(y)$. The hyperk\"ahler structure on $(\mu^{(s)})^{-1}(\zeta)/\mathbb{R}$ is independent of $\zeta$.\\
\quad For each $\zeta\in {\rm Im}\mathbb{H}$ we have an imbedding $\tilde{\iota}_{s,\zeta}:X\to (\mu^{(s)})^{-1}(\zeta)$ defined by $\tilde{\iota}_{s,\zeta}(x):=(x,\frac{\sqrt{s}}{2}(-\mu(x) + \zeta))$ which induces a diffeomorphism $\iota_{s,\zeta}:X\to (\mu^{(s)})^{-1}(\zeta)/\mathbb{R}$. Then we have another hyperk\"ahler structure on $X$ independent of $\zeta$ by the pull-back, which is called Taub-NUT deformation of $\omega$ denoted by $\omega^{(s)}$. If we denote by $g$ the hyperk\"ahler metric of $(X,\omega)$, then we denote by $g^{(s)}$ the hyperk\"ahler metric of $(X,\omega^{(s)})$.\\
\quad There is the pair of a harmonic function and an $S^1$-connection $(\Phi,A)$ corresponding to the hyperk\"ahler structure $\omega$ by Theorem 2.9. Then the corresponding pair to $\omega^{(s)}$ is given by $(\Phi + \frac{s}{4},A)$.\\
\quad The $S^1$-action on $X$ also preserves $\omega^{(s)}$ and $\mu:X\to {\rm Im}\mathbb{H}$ is also the hyperk\"ahler moment map with respect to $\omega^{(s)}$.
\begin{lem}
Let $(X,g)$ be as above and take $p_0,p\in X$. We suppose that $p_0$ is a fixed point by the $S^1$-action. Then we have the inequality
\begin{eqnarray}
d_{g^{(s)}}(p_0,p)^2\ge d_g(p_0,p)^2 + \frac{s}{4}|\mu(p) - \mu(p_0)|^2.\nonumber
\end{eqnarray}
\end{lem}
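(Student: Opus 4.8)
The plan is to exploit the construction of $g^{(s)}$ directly. Fix $\zeta\in{\rm Im}\mathbb{H}$. By definition $\iota_{s,\zeta}$ is an isometry from $(X,g^{(s)})$ onto $(\mu^{(s)})^{-1}(\zeta)/\mathbb{R}$ equipped with its Riemannian submersion metric, and $(\mu^{(s)})^{-1}(\zeta)$ carries the metric induced from the Riemannian product $X\times\mathbb{H}$; so I would bound $d_{g^{(s)}}(p_0,p)$ from below by a distance in $X\times\mathbb{H}$, which is completely explicit. Recall that the $\mathbb{R}$-action on $X\times\mathbb{H}$ acts by $y\mapsto y+\sqrt{s}^{-1}t$ on the $\mathbb{H}$-factor and through a homomorphism $\mathbb{R}\to S^1$ on the $X$-factor; on $(\mu^{(s)})^{-1}(\zeta)$ it is free and proper (the $\mathbb{H}$-component detects $t$), so $\pi\colon(\mu^{(s)})^{-1}(\zeta)\to(\mu^{(s)})^{-1}(\zeta)/\mathbb{R}$ is a principal $\mathbb{R}$-bundle whose connection is the horizontal distribution of the submersion. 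Given any piecewise smooth path $c$ from $p_0$ to $p$ in $X$, I would take the horizontal lift $\hat c$ of $\iota_{s,\zeta}\circ c$ starting at $\tilde\iota_{s,\zeta}(p_0)$; it exists over the whole interval, and as a curve in $X\times\mathbb{H}$ it has length equal to ${\rm length}_{g^{(s)}}(c)$ with endpoint $t\cdot\tilde\iota_{s,\zeta}(p)$ for some $t\in\mathbb{R}$. Hence ${\rm length}_{g^{(s)}}(c)\ge\inf_{t\in\mathbb{R}}d_{X\times\mathbb{H}}(\tilde\iota_{s,\zeta}(p_0),t\cdot\tilde\iota_{s,\zeta}(p))$, and taking the infimum over $c$ reduces the lemma to this explicit quantity.

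For the computation, write $\tilde\iota_{s,\zeta}(q)=(q,\frac{\sqrt{s}}{2}(-\mu(q)+\zeta))$, so that $t\cdot\tilde\iota_{s,\zeta}(p)=(\rho_t p,\frac{\sqrt{s}}{2}(-\mu(p)+\zeta)+\sqrt{s}^{-1}t)$ with $\rho_t$ the $S^1$-action. Because $p_0$ is a fixed point and $S^1$ acts isometrically, the $X$-component contributes $d_g(p_0,\rho_t p)=d_g(\rho_t p_0,\rho_t p)=d_g(p_0,p)$ for every $t$. On the $\mathbb{H}$-factor the difference of the second coordinates is $\frac{\sqrt{s}}{2}(\mu(p)-\mu(p_0))+\sqrt{s}^{-1}t$, whose imaginary part $\frac{\sqrt{s}}{2}(\mu(p)-\mu(p_0))\in{\rm Im}\mathbb{H}$ is orthogonal in $\mathbb{H}$ to the real part $\sqrt{s}^{-1}t\in\mathbb{R}$, so its squared norm equals $\frac{s}{4}|\mu(p)-\mu(p_0)|^2+s^{-1}t^2$. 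With the product metric this gives
\[
d_{X\times\mathbb{H}}\big(\tilde\iota_{s,\zeta}(p_0),\,t\cdot\tilde\iota_{s,\zeta}(p)\big)^2=d_g(p_0,p)^2+\frac{s}{4}|\mu(p)-\mu(p_0)|^2+s^{-1}t^2,
\]
whose infimum over $t$ is attained at $t=0$ and equals $d_g(p_0,p)^2+\frac{s}{4}|\mu(p)-\mu(p_0)|^2$. Combined with the previous paragraph this is the asserted inequality.

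I expect the only delicate step to be the reduction in the first paragraph: one needs the $\mathbb{R}$-action on $(\mu^{(s)})^{-1}(\zeta)$ to be free and proper so that $\pi$ is genuinely a principal bundle carrying a connection — but this is precisely the setting in which the Taub-NUT quotient is a smooth hyperk\"ahler manifold, hence is already available — and one needs the standard fact that a curve in a principal bundle admits a horizontal lift over its entire parameter interval (for the abelian structure group $\mathbb{R}$ the lifting equation is a first-order ODE integrable for all times, so no escape to infinity). Once that is in place the rest is bookkeeping with the flat product metric on $X\times\mathbb{H}$, together with the two structural facts that $p_0$ is $S^1$-fixed and that ${\rm Im}\mathbb{H}$ is orthogonal to $\mathbb{R}$ inside $\mathbb{H}$.
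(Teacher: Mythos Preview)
Your proof is correct and follows essentially the same route as the paper: both use that the quotient map $(\mu^{(s)})^{-1}(\zeta)\to(\mu^{(s)})^{-1}(\zeta)/\mathbb{R}$ is a Riemannian submersion (the paper phrases this as ``the same argument as Proposition~3.1'') to bound $d_{g^{(s)}}(p_0,p)$ below by $\inf_{t\in\mathbb{R}}d_{g\times g_{\mathbb{H}}}(\tilde\iota_{s,\zeta}(p_0),t\cdot\tilde\iota_{s,\zeta}(p))$, and then compute this infimum using that $p_0$ is $S^1$-fixed and that $\mathbb{R}\perp{\rm Im}\mathbb{H}$ in $\mathbb{H}$. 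Your write-up is more explicit about the horizontal-lift mechanism, but the argument is the same (the harmless sign flip $\mu(p)-\mu(p_0)$ versus $\mu(p_0)-\mu(p)$ in the $\mathbb{H}$-component does not affect the squared norm).
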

\begin{proof}
We apply the same argument as Proposition 3.1. Then the assertion follows from
\begin{eqnarray}
d_{g^{(s)}}(p_0,p)^2 &\ge& \inf_{t\in \mathbb{R}}d_{g\times g_{\mathbb{H}}}(\tilde{\iota}_{s,\zeta}(p_0)\cdot t,\tilde{\iota}_{s,\zeta}(p))^2\nonumber\\
&=& \inf_{t\in \mathbb{R}}(d_g(p_0e^{it},p)^2 + \big|\frac{\sqrt{s}}{2}(\mu(p_0) - \mu(p)) + \frac{1}{\sqrt{s}}t\big|^2)\nonumber\\
&=& \inf_{t\in \mathbb{R}}\bigg(d_g(p_0,p)^2 + \frac{s}{4}|\mu(p_0) - \mu(p)|^2 + \frac{t^2}{s}\bigg)\nonumber\\
&=& d_g(p_0,p)^2 + \frac{s}{4}|\mu(p_0) - \mu(p)|^2,\nonumber
\end{eqnarray}
where $g_{\mathbb{H}}$ is the Euclidean metric on $\mathbb{H}$ and $g\times g_{\mathbb{H}}$ is the direct product metric.
\end{proof}
\begin{lem}
Let $(X,g)$ be as above and $B\subset {\rm Im}\mathbb{H}$ be a measurable set. Then we have
\begin{eqnarray}
{\rm vol}_{g^{(s)}}(\mu^{-1}(B)) = {\rm vol}_g(\mu^{-1}(B)) + \frac{\pi s}{4}m_{{\rm Im}\mathbb{H}}(B),\nonumber
\end{eqnarray}
where $m_{{\rm Im}\mathbb{H}}$ is the Lebesgue measure of ${\rm Im}\mathbb{H}$.
\end{lem}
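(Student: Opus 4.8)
The plan is to deduce the formula directly from Lemma 2.10 together with the description, recalled just above this lemma, of the Taub-NUT deformation in terms of Gibbons--Hawking data. By Theorem 2.9 the hyperk\"ahler structure $\omega$ corresponds to a pair $(\Phi,A)$, and $\omega^{(s)}$ corresponds to $(\Phi+\frac{s}{4},A)$; in particular $(X,\omega^{(s)})$ again satisfies condition $(i)$ of Theorem 2.9, with the same moment map $\mu$ and the same harmonic function shifted by the constant $\frac{s}{4}$. Thus Lemma 2.10 applies verbatim to the metric $g^{(s)}$ with the function $\Phi+\frac{s}{4}$ in place of $\Phi$.

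First suppose $B=U$ is open. Applying Lemma 2.10 to $g$ and to $g^{(s)}$ gives
\begin{eqnarray}
{\rm vol}_{g^{(s)}}(\mu^{-1}(U)) &=& \pi\int_{\zeta\in U}\Big(\Phi(\zeta)+\frac{s}{4}\Big)d\zeta_1d\zeta_2d\zeta_3\nonumber\\
&=& \pi\int_{\zeta\in U}\Phi(\zeta)d\zeta_1d\zeta_2d\zeta_3 + \frac{\pi s}{4}m_{{\rm Im}\mathbb{H}}(U)\nonumber\\
&=& {\rm vol}_g(\mu^{-1}(U)) + \frac{\pi s}{4}m_{{\rm Im}\mathbb{H}}(U).\nonumber
\end{eqnarray}
For a general measurable $B\subset{\rm Im}\mathbb{H}$, observe that both $B\mapsto{\rm vol}_{g^{(s)}}(\mu^{-1}(B))$ and $B\mapsto{\rm vol}_g(\mu^{-1}(B))+\frac{\pi s}{4}m_{{\rm Im}\mathbb{H}}(B)$ define Borel measures on ${\rm Im}\mathbb{H}$: the locus where the $S^1$-action fails to be free (together with the part of ${\rm Im}\mathbb{H}$ not in the image of $\mu$, which in the cases to which the lemma is applied is empty) is mapped by $\mu$ into a set of Lebesgue measure zero whose preimage is a ${\rm vol}_g$- and ${\rm vol}_{g^{(s)}}$-null subset of $X$, so neither side is affected by deleting it. Both measures are finite on each ball $B_R$ (since $\Phi$ is locally integrable on $\mathbb{R}^3$), the balls $B_R$ exhaust ${\rm Im}\mathbb{H}$, and the open sets form a $\pi$-system generating the Borel $\sigma$-algebra on which the two measures agree by the computation above. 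By the uniqueness theorem for measures they coincide on all Borel, hence all measurable, sets, which is the assertion.

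The argument is essentially routine once the Taub-NUT pair $(\Phi+\frac{s}{4},A)$ is identified; the only point needing a little care is the passage from open $U$ to arbitrary measurable $B$, handled by the measure-uniqueness argument above. Alternatively one may avoid measure theory altogether and simply reproduce the proof of Lemma 2.10 with $\Phi$ replaced by $\Phi+\frac{s}{4}$, since that proof only uses the relation $\frac{dA}{2\sqrt{-1}}=\mu^*(*d\Phi)$ (unchanged, as $A$ is unchanged and $\frac{s}{4}$ is constant) and the resulting local expression for the volume form.
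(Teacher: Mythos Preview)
Your proof is correct and follows the same approach as the paper, which simply states that the result follows directly from Lemma 2.10 together with the fact that $\omega^{(s)}$ corresponds to the pair $(\Phi+\tfrac{s}{4},A)$. Your version is in fact more careful: Lemma 2.10 is stated only for open sets, and you explicitly justify the passage to arbitrary measurable $B$ via a measure-uniqueness argument, a point the paper glosses over.
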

\begin{proof}
It follows directly from Lemma 2.10 and that $\omega^{(s)}$ corresponds to $(\Phi + \frac{s}{4},A)$.
\end{proof}
For $s,C>0$ and $\lambda\in ({\rm Im}\mathbb{H})_0^{\mathbb{Z}}$, put
\begin{eqnarray}
\theta_{\lambda,C}^{(s)}(R):=CR^2\varphi_{\lambda}(R) + \frac{\pi^2s}{3}R^3,\quad \tau_{\lambda,C}^{(s)}(R):=CR\varphi_{\lambda}(R) + \frac{s}{4}R^2.\nonumber
\end{eqnarray}
\begin{prop}
For $\lambda\in ({\rm Im}\mathbb{H})_0^{\mathbb{Z}}$ and $s>0$, we have
\begin{eqnarray}
\limsup_{r\to +\infty}\frac{V_{g_{\lambda}^{(s)}}(p_0,r)}{r^3} \le \frac{8\pi^2}{3\sqrt{s}}.\nonumber
\end{eqnarray}
\end{prop}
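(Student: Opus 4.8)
The plan is to transcribe the argument of Proposition 3.5, now feeding in the Taub-NUT data supplied by Lemmas 7.1 and 7.2, and then pass to the limit by means of Lemma 5.8.

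First I would note that the base point $p_0$, chosen in Section 3 with $\mu_{\lambda}(p_0)=0$, is fixed by the $S^1$-action: writing $p_0=[x]$ with $x\in\hat{\mu}_{\Lambda}^{-1}(0)$, the equation $\mu_{\lambda}([x])=x_0i\bar{x}_0-\lambda_0=0$ together with $\lambda_0=0$ forces $x_0=0$, so $Stab([x])=S^1$. Hence Lemma 7.1 is applicable at $p_0$, and combined with Proposition 3.1 it gives, for every $p\in X_{\lambda}$,
\[
d_{g_{\lambda}^{(s)}}(p_0,p)^2 \;\ge\; Q_-|\mu_{\lambda}(p)|\,\varphi_{\lambda}(|\mu_{\lambda}(p)|) + \frac{s}{4}|\mu_{\lambda}(p)|^2 \;=\; \tau_{\lambda,Q_-}^{(s)}(|\mu_{\lambda}(p)|),
\]
with $Q_-=1/8$. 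On the volume side, Lemma 7.2 applied to $B=B_R$, together with Proposition 3.4 and $m_{{\rm Im}\mathbb{H}}(B_R)=\frac{4}{3}\pi R^3$, yields
\[
{\rm vol}_{g_{\lambda}^{(s)}}(\mu_{\lambda}^{-1}(B_R)) \;=\; {\rm vol}_{g_{\lambda}}(\mu_{\lambda}^{-1}(B_R)) + \frac{\pi^2 s}{3}R^3 \;\le\; P_+R^2\varphi_{\lambda}(R) + \frac{\pi^2 s}{3}R^3 \;=\; \theta_{\lambda,P_+}^{(s)}(R).
\]

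Since $\tau_{\lambda,Q_-}^{(s)}$ is continuous, strictly increasing (both $R\varphi_{\lambda}(R)$ and $\frac{s}{4}R^2$ are), vanishes at $R=0$ and diverges to $+\infty$, it admits an inverse $\mathbb{R}_{\ge 0}\to\mathbb{R}_{\ge 0}$. Arguing verbatim as in Proposition 3.5: if $p\in B_{g_{\lambda}^{(s)}}(p_0,\sqrt{\tau_{\lambda,Q_-}^{(s)}(R)})$ then the distance estimate above forces $\tau_{\lambda,Q_-}^{(s)}(|\mu_{\lambda}(p)|)\le\tau_{\lambda,Q_-}^{(s)}(R)$, hence $|\mu_{\lambda}(p)|\le R$, so $B_{g_{\lambda}^{(s)}}(p_0,\sqrt{\tau_{\lambda,Q_-}^{(s)}(R)})\subset\mu_{\lambda}^{-1}(B_R)$. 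Feeding in the volume bound and substituting $R=(\tau_{\lambda,Q_-}^{(s)})^{-1}(r^2)$ gives $V_{g_{\lambda}^{(s)}}(p_0,r)\le\theta_{\lambda,P_+}^{(s)}\circ(\tau_{\lambda,Q_-}^{(s)})^{-1}(r^2)$ for all $r\ge 0$.

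It remains to compute the limit of the right-hand side divided by $r^3$. Put $R=(\tau_{\lambda,Q_-}^{(s)})^{-1}(r^2)\to+\infty$ as $r\to+\infty$, so that $r^2=Q_-R\varphi_{\lambda}(R)+\frac{s}{4}R^2$; then
\[
\frac{\theta_{\lambda,P_+}^{(s)}(R)}{r^3} \;=\; \frac{P_+R^2\varphi_{\lambda}(R) + \frac{\pi^2 s}{3}R^3}{\bigl(Q_-R\varphi_{\lambda}(R) + \frac{s}{4}R^2\bigr)^{3/2}} \;=\; \frac{P_+\frac{\varphi_{\lambda}(R)}{R} + \frac{\pi^2 s}{3}}{\bigl(Q_-\frac{\varphi_{\lambda}(R)}{R} + \frac{s}{4}\bigr)^{3/2}},
\]
and since $\varphi_{\lambda}(R)/R\to 0$ by Lemma 5.8, the right-hand side tends to $\frac{\pi^2 s/3}{(s/4)^{3/2}}=\frac{8\pi^2}{3\sqrt{s}}$. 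Therefore $\limsup_{r\to+\infty} V_{g_{\lambda}^{(s)}}(p_0,r)/r^3\le 8\pi^2/(3\sqrt{s})$, as claimed. No serious obstacle arises: the argument is a line-by-line reprise of Sections 3 and 5 with the Taub-NUT correction $\frac{s}{4}$ added to $\tau$ and $\frac{\pi^2 s}{3}R^3$ added to $\theta$; the only points requiring care are that $p_0$ really is an $S^1$-fixed point, so that Lemma 7.1 may be applied at $p_0$, and that the subleading term $P_+\varphi_{\lambda}(R)/R$ genuinely vanishes in the limit, which is exactly the content of Lemma 5.8.
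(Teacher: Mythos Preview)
Your proof is correct and follows essentially the same route as the paper: derive $V_{g_{\lambda}^{(s)}}(p_0,r)\le\theta_{\lambda,P_+}^{(s)}\circ(\tau_{\lambda,Q_-}^{(s)})^{-1}(r^2)$ from Lemmas 7.1 and 7.2 (plus Propositions 3.1 and 3.4) exactly as in Proposition 3.5, and then pass to the limit using Lemma 5.8. The only cosmetic differences are that you explicitly verify $p_0$ is an $S^1$-fixed point (which the paper uses implicitly) and compute the limit of the quotient directly, whereas the paper first bounds $r^3\ge (s/4)^{3/2}R^3$ and then takes the limsup.
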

\begin{proof}
From Lemma 7.1 and 7.2, we have
\begin{eqnarray}
V_{g_{\lambda}^{(s)}}(p_0,r) \le \theta_{\lambda,P_+}^{(s)}\circ(\tau_{\lambda,Q_-}^{(s)})^{-1}(r^2)\nonumber
\end{eqnarray}
for $r>0$. Then it suffices to show
\begin{eqnarray}
\limsup_{r\to +\infty}\frac{\theta_{\lambda,P_+}^{(s)}\circ(\tau_{\lambda,Q_-}^{(s)})^{-1}(r^2)}{r^3} \le \frac{8\pi^2}{3\sqrt{s}}.\nonumber
\end{eqnarray}
Put $R=(\tau_{\lambda,Q_-}^{(s)})^{-1}(r^2)$. Then we have
\begin{eqnarray}
r^2 =Q_-R\varphi_{\lambda}(R) + \frac{s}{4}R^2 \ge \frac{s}{4}R^2.\nonumber
\end{eqnarray}
Therefore Lemma 5.8 gives
\begin{eqnarray}
\limsup_{r\to +\infty}\frac{\theta_{\lambda,P_+}^{(s)}\circ(\tau_{\lambda,Q_-}^{(s)})^{-1}(r^2)}{r^3} \le \limsup_{R\to +\infty}\frac{8(P_+R^2\varphi_{\lambda}(R) + \frac{\pi^2s}{3}R^3)}{\sqrt{s}^3R^3} = \frac{8\pi^2}{3\sqrt{s}}.\nonumber
\end{eqnarray}
\end{proof}
Next we consider the lower bound for $V_{g_{\lambda}^{(s)}}(r)$. We apply the same way as Section 4. Put $l_{\lambda}^{(s)}:{\rm Im}\mathbb{H}\to\mathbb{R}$ as
\begin{eqnarray}
l_{\lambda}^{(s)}(\zeta):=\int_1^{|\zeta|}\sqrt{\Phi_{\lambda}\bigg(t\frac{\zeta}{|\zeta|}\bigg) + \frac{s}{4}}\ dt\nonumber
\end{eqnarray}
on $|\zeta| >1$, and $l_{\lambda}^{(s)}(\zeta):=0$ on $|\zeta| \le 1$. Then the inequality $d_{g_{\lambda}^{(s)}}(p_0,p)\le L_{\lambda}^{(s)} + l_{\lambda}^{(s)}(\mu_{\lambda}(p))$ holds where $p_0\in X$ is taken as in Sections 3 and 4, and we put $L_{\lambda}^{(s)}:=\sup_{p\in\mu_{\lambda}^{-1}(\bar{B}_1)}d_{g_{\lambda}^{(s)}}(p_0,p)$.
\begin{lem}
Let $C_+ >0$ be as in Proposotion 4.2. Then we have
\begin{eqnarray}
\int_{\Theta\in S^2}l_{\lambda}^{(s)}(R\Theta)dm_{S^2} \le 4\pi(\sqrt{\tau_{\lambda,C_+}(R)} + \sqrt{\frac{sR^2}{4}}).\nonumber
\end{eqnarray}
\end{lem}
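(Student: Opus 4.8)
The plan is to reduce the claim to Proposition 4.2 by means of the elementary subadditivity inequality $\sqrt{a+b}\le\sqrt{a}+\sqrt{b}$, valid for all $a,b\ge 0$. Note first that $\tau_{\lambda,C_+}(R)=C_+R\varphi_{\lambda}(R)$ by definition, so that Proposition 4.2 reads $\int_{\Theta\in\partial B_1}l_{\lambda}(R\Theta)\,dm_{S^2}\le 4\pi\sqrt{\tau_{\lambda,C_+}(R)}$. It therefore suffices to control the difference $l_{\lambda}^{(s)}(R\Theta)-l_{\lambda}(R\Theta)$ pointwise in $\Theta$.

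If $R\le 1$, then $l_{\lambda}^{(s)}(R\Theta)=0$ for every $\Theta\in S^2$, so the left-hand side of the asserted inequality vanishes and there is nothing to prove; hence I would assume $R\ge 1$. For such $R$, applying $\sqrt{a+b}\le\sqrt{a}+\sqrt{b}$ with $a=\Phi_{\lambda}(t\Theta)$ and $b=s/4$ and integrating in $t$ over $[1,R]$ yields
\[
l_{\lambda}^{(s)}(R\Theta)=\int_1^R\sqrt{\Phi_{\lambda}(t\Theta)+\frac{s}{4}}\,dt\le\int_1^R\sqrt{\Phi_{\lambda}(t\Theta)}\,dt+\int_1^R\frac{\sqrt{s}}{2}\,dt= l_{\lambda}(R\Theta)+(R-1)\frac{\sqrt{s}}{2}.
\]
As in Section 4, $l_{\lambda}^{(s)}$ is well defined here because the only singularities of $\Phi_{\lambda}+s/4$ along the ray $t\mapsto t\Theta$ are of the form $1/(a|t-b|)$ plus a smooth term, whose square root has a finite integral.

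Finally I would integrate this pointwise inequality over $S^2$ against $m_{S^2}$, use $m_{S^2}(S^2)=4\pi$ together with Proposition 4.2, and estimate $(R-1)\sqrt{s}/2\le R\sqrt{s}/2=\sqrt{sR^2/4}$, obtaining
\[
\int_{\Theta\in S^2}l_{\lambda}^{(s)}(R\Theta)\,dm_{S^2}\le 4\pi\sqrt{\tau_{\lambda,C_+}(R)}+4\pi(R-1)\frac{\sqrt{s}}{2}\le 4\pi\left(\sqrt{\tau_{\lambda,C_+}(R)}+\sqrt{\frac{sR^2}{4}}\right),
\]
which is exactly the assertion. There is no serious obstacle in this argument; the only points requiring a little care are the bookkeeping identification $C_+R\varphi_{\lambda}(R)=\tau_{\lambda,C_+}(R)$, so that Proposition 4.2 applies verbatim, and the routine verification that the deformed integrand $\sqrt{\Phi_{\lambda}+s/4}$ is still integrable along the rays.
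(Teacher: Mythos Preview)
Your proof is correct and follows essentially the same approach as the paper: both use the subadditivity $\sqrt{a+b}\le\sqrt a+\sqrt b$ to split the integrand into the $\Phi_\lambda$-part (handled by Proposition~4.2) and the constant $\sqrt{s}/2$ (handled by direct integration, using $(R-1)\sqrt{s}/2\le\sqrt{sR^2/4}$). The only cosmetic difference is that you apply the splitting pointwise before integrating over $S^2$, whereas the paper writes the double integral first; the argument is the same.
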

\begin{proof}
The assertion follows from
\begin{eqnarray}
\int_{\Theta\in S^2}\bigg(\int_1^R\sqrt{\Phi_{\lambda}(t\Theta) + \frac{s}{4}}\ dt\bigg)dm_{S^2} &\le& \int_{\Theta\in S^2}\bigg(\int_1^R\sqrt{\frac{s}{4}}dt\bigg)dm_{S^2} \nonumber\\
&\ & \quad + \int_{\Theta\in S^2}l_{\lambda}(R\Theta)dm_{S^2}\nonumber\\
&\le& 4\pi\sqrt{\frac{sR^2}{4}} + 4\pi\sqrt{\tau_{\lambda,C_+}(R)}.\nonumber
\end{eqnarray}
\end{proof}
\quad Put $U_{R,T}^{(s)}:=\{\Theta\in S^2; L_{\lambda}^{(s)} + l_{\lambda}^{(s)}(R\Theta)\le\sqrt{\tau_{\lambda,T}(R)} + \sqrt{\frac{sR^2}{4}}\}$.
\begin{lem}
There is a constant $R_0> 0$ such that
\begin{eqnarray}
m_{S^2}(U_{R,T}^{(s)}) \ge \frac{4\pi(\sqrt{T} - \sqrt{2C_+})}{\sqrt{T}}\nonumber
\end{eqnarray}
for any $R\ge R_0$ and $T>2C_+$.
\end{lem}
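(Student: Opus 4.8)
The plan is to follow the proof of Lemma 4.3, with $F(\Theta)=L_\lambda+l_\lambda(R\Theta)$ replaced by $F^{(s)}(\Theta):=L_\lambda^{(s)}+l_\lambda^{(s)}(R\Theta)$, using the Taub-NUT term $\sqrt{sR^2/4}$ in place of the (now unneeded) sharper lower bound from Proposition 3.1. Concretely, I would first bound $\int_{S^2}F^{(s)}\,dm_{S^2}$ from above, then bound $F^{(s)}$ pointwise from below on all of $S^2$ and, more strongly, on the complement of $U_{R,T}^{(s)}$, and finally split the integral and solve for $m_{S^2}(U_{R,T}^{(s)})$.

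For the upper bound: by Lemma 7.4, $\int_{S^2}l_\lambda^{(s)}(R\Theta)\,dm_{S^2}\le 4\pi(\sqrt{\tau_{\lambda,C_+}(R)}+\sqrt{sR^2/4})$, and $\tau_{\lambda,C_+}(R)=C_+R\varphi_\lambda(R)$. Since $R\varphi_\lambda(R)\to+\infty$ as $R\to+\infty$ (Lemma 5.7), while $L_\lambda^{(s)}<+\infty$, there is $R_0>0$ with $L_\lambda^{(s)}\le(\sqrt2-1)\sqrt{C_+R\varphi_\lambda(R)}$ for $R\ge R_0$. Adding $4\pi L_\lambda^{(s)}$ and using $\sqrt2\,\sqrt{C_+R\varphi_\lambda(R)}=\sqrt{\tau_{\lambda,2C_+}(R)}$ then yields
\[
\int_{S^2}F^{(s)}(\Theta)\,dm_{S^2}\ \le\ 4\pi\bigl(\sqrt{\tau_{\lambda,2C_+}(R)}+\sqrt{sR^2/4}\bigr)\qquad(R\ge R_0).
\]
For the lower bounds: fix $\Theta\in S^2$ and any $p\in X_\lambda$ with $\mu_\lambda(p)=R\Theta$. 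Since $p_0$ is a fixed point of the $S^1$-action (because $\mu_\lambda(p_0)=0$ forces the $0$-th coordinate of a representative to vanish, so its stabilizer is all of $S^1$), Lemma 7.1 applies and gives $F^{(s)}(\Theta)\ge d_{g_\lambda^{(s)}}(p_0,p)\ge\sqrt{d_{g_\lambda}(p_0,p)^2+\frac{s}{4}R^2}\ge\sqrt{sR^2/4}$ for every $\Theta$; and by the very definition of $U_{R,T}^{(s)}$ one has $F^{(s)}(\Theta)\ge\sqrt{\tau_{\lambda,T}(R)}+\sqrt{sR^2/4}$ on the complement of $U_{R,T}^{(s)}$ in $S^2$.

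Then I would combine these. Splitting $\int_{S^2}F^{(s)}\,dm_{S^2}$ over $U_{R,T}^{(s)}$ and its complement and applying the two lower bounds gives, for $R\ge R_0$,
\[
4\pi\bigl(\sqrt{\tau_{\lambda,2C_+}(R)}+\sqrt{sR^2/4}\bigr)\ \ge\ 4\pi\sqrt{sR^2/4}+\bigl(4\pi-m_{S^2}(U_{R,T}^{(s)})\bigr)\sqrt{\tau_{\lambda,T}(R)}.
\]
The terms $4\pi\sqrt{sR^2/4}$ cancel; since $\tau_{\lambda,2C_+}(R)=2C_+R\varphi_\lambda(R)$ and $\tau_{\lambda,T}(R)=TR\varphi_\lambda(R)$ with $R\varphi_\lambda(R)>0$, dividing by $\sqrt{R\varphi_\lambda(R)}$ leaves $4\pi\sqrt{2C_+}\ge(4\pi-m_{S^2}(U_{R,T}^{(s)}))\sqrt T$, that is, $m_{S^2}(U_{R,T}^{(s)})\ge 4\pi(\sqrt T-\sqrt{2C_+})/\sqrt T$, which is the claim. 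I do not anticipate a real obstacle: the only delicate point is the existence of $R_0$ in the first step — this is exactly what forces the hypothesis $R\ge R_0$ — and it is precisely the cancellation of the two Taub-NUT contributions that produces the denominator $\sqrt T$ here, rather than $\sqrt T-\sqrt{Q_-}$ as in Lemma 4.3.
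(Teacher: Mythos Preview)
Your proof is correct and follows essentially the same route as the paper: obtain the upper bound $\int_{S^2}F^{(s)}\,dm_{S^2}\le 4\pi(\sqrt{\tau_{\lambda,2C_+}(R)}+\sqrt{sR^2/4})$ for $R\ge R_0$ from Lemma~7.4, split the integral over $U_{R,T}^{(s)}$ and its complement using the pointwise lower bounds, and let the Taub-NUT terms $4\pi\sqrt{sR^2/4}$ cancel. The only cosmetic difference is that the paper first records the sharper pointwise bound $F^{(s)}(\Theta)\ge\sqrt{\tau_{\lambda,Q_-}(R)+sR^2/4}$ (combining Lemma~7.1 with Proposition~3.1) before immediately weakening it to $\sqrt{sR^2/4}$, whereas you go directly to the weaker bound; this has no effect on the argument.
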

\begin{proof}
We consider the same argument as in Lemma 4.3. First of all we remark that there exists sufficiently large $R_0>0$ such that
\begin{eqnarray}
\int_{\Theta\in S^2}(L_{\lambda}^{(s)} + l_{\lambda}^{(s)}(R\Theta))dm_{S^2} \le 4\pi(\sqrt{\tau_{\lambda,2C_+}(R)} + \sqrt{\frac{sR^2}{4}})\nonumber
\end{eqnarray}
for any $R\ge R_0$. Then we have
\begin{eqnarray}
\int_{\Theta\in S^2}(L_{\lambda}^{(s)} + l_{\lambda}^{(s)}(R\Theta))dm_{S^2} &=& \int_{\Theta\in U_{R,T}^{(s)}}(L_{\lambda}^{(s)} + l_{\lambda}^{(s)}(R\Theta))dm_{S^2}\nonumber\\
&\ & \  + \int_{\Theta\in S^2\backslash U_{R,T}^{(s)}}(L_{\lambda}^{(s)} + l_{\lambda}^{(s)}(R\Theta))dm_{S^2}\nonumber\\
&\ge& m_{S^2}(U_{R,T}^{(s)})\sqrt{\tau_{\lambda,Q_-}(R) + \frac{sR^2}{4}}\nonumber\\
&\ & \  + (4\pi - m_{S^2}(U_{R,T}^{(s)}))(\sqrt{\tau_{\lambda,T}(R)} + \sqrt{\frac{sR^2}{4}}),\nonumber
\end{eqnarray}
where the constant $Q_->0$ is as in Proposition 3.1. Then an inequality $\sqrt{\tau_{\lambda,Q_-}(R) + \frac{sR^2}{4}}\ge\sqrt{\frac{sR^2}{4}}$ gives
\begin{eqnarray}
\int_{\Theta\in S^2}(L_{\lambda}^{(s)} + l_{\lambda}^{(s)}(R\Theta))dm_{S^2} &\ge& 4\pi\sqrt{\frac{sR^2}{4}}\nonumber\\
&\ &\quad + (4\pi - m_{S^2}(U_{R,T}^{(s)}))\sqrt{\tau_{\lambda,T}(R)}.\nonumber
\end{eqnarray}
Thus we have the conclusion.
\end{proof}
\begin{lem}
For each $R\ge 0$ and $T>0$, $\mu_{\lambda}^{-1}(B_{R,U_{R,T}^{(s)}})$ is a subset of $B_{g_{\lambda}^{(s)}}\bigg(p_0,\sqrt{\tau_{\lambda,T}(R)} + \sqrt{\frac{sR^2}{4}}\bigg)$.
\end{lem}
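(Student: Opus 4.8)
The plan is to adapt the proof of Lemma~4.4 essentially verbatim, replacing $l_{\lambda}$, $L_{\lambda}$ and $U_{R,T}$ by their Taub-NUT counterparts $l_{\lambda}^{(s)}$, $L_{\lambda}^{(s)}$ and $U_{R,T}^{(s)}$, and using the distance estimate $d_{g_{\lambda}^{(s)}}(p_0,p)\le L_{\lambda}^{(s)}+l_{\lambda}^{(s)}(\mu_{\lambda}(p))$ recorded just before Lemma~7.4 in place of the corresponding bound for $g_{\lambda}$ from Section~4. That estimate is itself obtained by the same path construction as in Section~4 --- a geodesic inside $\mu_{\lambda}^{-1}(\bar{B}_1)$ followed by the horizontal lift of a radial segment --- now with $\Phi_{\lambda}$ replaced by $\Phi_{\lambda}+\frac{s}{4}$ and with the same connection $A_{\lambda}$ over the same Euclidean base, so no new ingredient is needed.

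First I would fix $p\in\mu_{\lambda}^{-1}(B_{R,U_{R,T}^{(s)}})$. By the definition of $B_{R,U_{R,T}^{(s)}}$ we may write $\mu_{\lambda}(p)=t\Theta$ with $0\le t\le R$ and $\Theta\in U_{R,T}^{(s)}\subset S^2$; in particular $|\mu_{\lambda}(p)|\le R$. Suppose $|\mu_{\lambda}(p)|>1$. Then $t=|\mu_{\lambda}(p)|$, $\Theta=\mu_{\lambda}(p)/|\mu_{\lambda}(p)|$, and since the integrand defining $l_{\lambda}^{(s)}$ is nonnegative while $1<|\mu_{\lambda}(p)|\le R$,
\begin{eqnarray}
l_{\lambda}^{(s)}(\mu_{\lambda}(p)) = \int_1^{|\mu_{\lambda}(p)|}\sqrt{\Phi_{\lambda}(t\Theta)+\frac{s}{4}}\ dt \le \int_1^{R}\sqrt{\Phi_{\lambda}(t\Theta)+\frac{s}{4}}\ dt = l_{\lambda}^{(s)}(R\Theta).\nonumber
\end{eqnarray}
Since $\Theta\in U_{R,T}^{(s)}$, its defining inequality gives $L_{\lambda}^{(s)}+l_{\lambda}^{(s)}(R\Theta)\le\sqrt{\tau_{\lambda,T}(R)}+\sqrt{\frac{sR^2}{4}}$, whence
\begin{eqnarray}
d_{g_{\lambda}^{(s)}}(p_0,p) &\le& L_{\lambda}^{(s)}+l_{\lambda}^{(s)}(\mu_{\lambda}(p)) \le L_{\lambda}^{(s)}+l_{\lambda}^{(s)}(R\Theta)\nonumber\\
&\le& \sqrt{\tau_{\lambda,T}(R)}+\sqrt{\frac{sR^2}{4}}.\nonumber
\end{eqnarray}

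If instead $|\mu_{\lambda}(p)|\le 1$, then $l_{\lambda}^{(s)}(\mu_{\lambda}(p))=0$ by definition, while $L_{\lambda}^{(s)}\le L_{\lambda}^{(s)}+l_{\lambda}^{(s)}(R\Theta)\le\sqrt{\tau_{\lambda,T}(R)}+\sqrt{\frac{sR^2}{4}}$ again because $\Theta\in U_{R,T}^{(s)}$ and $l_{\lambda}^{(s)}\ge 0$, so $d_{g_{\lambda}^{(s)}}(p_0,p)\le L_{\lambda}^{(s)}\le\sqrt{\tau_{\lambda,T}(R)}+\sqrt{\frac{sR^2}{4}}$. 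In either case $p\in B_{g_{\lambda}^{(s)}}(p_0,\sqrt{\tau_{\lambda,T}(R)}+\sqrt{sR^2/4})$, which is the claim. I do not expect a genuine obstacle here: the only points to keep straight are that $l_{\lambda}^{(s)}$ is monotone along each ray (immediate from positivity of its integrand) and that the triangle-type distance bound is valid for $g_{\lambda}^{(s)}$, which is exactly the content of the remark preceding Lemma~7.4.
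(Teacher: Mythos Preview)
Your proof is correct and is exactly the approach intended by the paper, which simply writes ``It is shown by the same argument as Lemma 4.4.'' You have spelled out that argument in the Taub-NUT setting, using the distance bound $d_{g_{\lambda}^{(s)}}(p_0,p)\le L_{\lambda}^{(s)}+l_{\lambda}^{(s)}(\mu_{\lambda}(p))$, the radial monotonicity of $l_{\lambda}^{(s)}$, and the defining inequality of $U_{R,T}^{(s)}$, precisely as in Lemma~4.4.
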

\begin{proof}
It is shown by the same argument as Lemma 4.4.
\end{proof}
\begin{prop}
For each $\lambda\in ({\rm Im}\mathbb{H})_0^{\mathbb{Z}}$ and $s>0$ we have
\begin{eqnarray}
\liminf_{r\to +\infty}\frac{V_{g_{\lambda}^{(s)}}(p_0,r)}{r^3} \ge \frac{8\pi^2}{3\sqrt{s}}.\nonumber
\end{eqnarray}
\end{prop}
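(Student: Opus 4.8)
The plan is to run the argument of Proposition 4.5 in the Taub-NUT deformed setting, keeping track of the extra term produced by Lemma 7.2, and then to let the auxiliary parameter $T$ tend to $+\infty$. Together with Proposition 7.3 this will give Theorem 1.4.

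First I would fix $T>2C_+$ and take $R\ge R_0$ with $R_0$ as in Lemma 7.5. Combining Lemma 7.6 (applied with $T$) with the fact that $V_{g_{\lambda}^{(s)}}(p_0,\cdot)$ is nondecreasing gives
\[
V_{g_{\lambda}^{(s)}}\Big(p_0,\sqrt{\tau_{\lambda,T}(R)}+\sqrt{\tfrac{sR^2}{4}}\Big)\ \ge\ {\rm vol}_{g_{\lambda}^{(s)}}\big(\mu_{\lambda}^{-1}(B_{R,U_{R,T}^{(s)}})\big).
\]
By Lemma 7.2 the right-hand side equals ${\rm vol}_{g_{\lambda}}(\mu_{\lambda}^{-1}(B_{R,U_{R,T}^{(s)}}))+\frac{\pi s}{4}m_{{\rm Im}\mathbb{H}}(B_{R,U_{R,T}^{(s)}})$, where the first summand is nonnegative (and could, if desired, be estimated below by Proposition 4.1, though this is not needed), while a computation in polar coordinates gives $m_{{\rm Im}\mathbb{H}}(B_{R,U})=\frac{R^3}{3}m_{S^2}(U)$ for every measurable $U\subset S^2$. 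Using the lower bound for $m_{S^2}(U_{R,T}^{(s)})$ from Lemma 7.5 we obtain
\[
V_{g_{\lambda}^{(s)}}\Big(p_0,\sqrt{\tau_{\lambda,T}(R)}+\sqrt{\tfrac{sR^2}{4}}\Big)\ \ge\ \frac{\pi s}{4}\cdot\frac{R^3}{3}\cdot\frac{4\pi(\sqrt{T}-\sqrt{2C_+})}{\sqrt{T}}\ =\ \frac{\pi^2 s R^3}{3}\cdot\frac{\sqrt{T}-\sqrt{2C_+}}{\sqrt{T}}.
\]

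Next I would reparametrize by the radius. Put $r(R):=\sqrt{\tau_{\lambda,T}(R)}+\sqrt{\frac{sR^2}{4}}$, which is a continuous strictly increasing bijection of $[R_0,\infty)$ onto $[r(R_0),\infty)$. Since $\tau_{\lambda,T}(R)=TR\varphi_{\lambda}(R)$ and $\varphi_{\lambda}(R)/R\to 0$ by Lemma 5.8, we have $\tau_{\lambda,T}(R)=o(R^2)$, hence $r(R)=\frac{\sqrt{s}}{2}R\,(1+o(1))$ and therefore $R^3=\frac{8}{s^{3/2}}\,r(R)^3\,(1+o(1))$ as $R\to+\infty$. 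Feeding this into the previous display, and noting that as $r\to+\infty$ the corresponding $R=R(r)\to+\infty$, we get
\[
\liminf_{r\to+\infty}\frac{V_{g_{\lambda}^{(s)}}(p_0,r)}{r^3}\ \ge\ \frac{\pi^2 s}{3}\cdot\frac{8}{s^{3/2}}\cdot\frac{\sqrt{T}-\sqrt{2C_+}}{\sqrt{T}}\ =\ \frac{8\pi^2}{3\sqrt{s}}\cdot\frac{\sqrt{T}-\sqrt{2C_+}}{\sqrt{T}}.
\]
Letting $T\to+\infty$ sends the factor $\frac{\sqrt{T}-\sqrt{2C_+}}{\sqrt{T}}=1-\sqrt{2C_+/T}$ to $1$, which gives the assertion.

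I expect the only delicate point to be the bookkeeping in the reparametrization step: one must be sure that, because $\tau_{\lambda,T}(R)$ is negligible compared with $sR^2/4$, the error in $r(R)=\frac{\sqrt{s}}{2}R(1+o(1))$ genuinely tends to $0$ as $R\to\infty$ for each fixed $T$, so that the limit in $r$ (equivalently in $R$) is taken first and produces a constant depending only on $T$, and the limit $T\to+\infty$ is taken afterwards; no uniformity in $T$ is required. Everything else is a direct combination of Lemmas 7.2, 7.5, 7.6 and 5.8.
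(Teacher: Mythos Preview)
Your proof is correct and follows essentially the same route as the paper's: both combine Lemmas 7.5, 7.6 and 7.2 with the fact that $\varphi_{\lambda}(R)/R\to 0$ (Lemma 5.8), and finish by sending an auxiliary parameter to its limit. The only cosmetic difference is that the paper parametrizes by $\varepsilon$---choosing $T_{\varepsilon}$ so that $m_{S^2}(U_{R,T_{\varepsilon}}^{(s)})>4\pi(1-\varepsilon)$ and then bounding $\sqrt{\tau_{\lambda,T_{\varepsilon}}(R)}\le \varepsilon R$ explicitly---whereas you keep $T$ as the parameter and use the asymptotic $r(R)=\tfrac{\sqrt{s}}{2}R(1+o(1))$ directly; your final step $T\to\infty$ corresponds exactly to the paper's $\varepsilon\to 0$.
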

\begin{proof}
For each sufficiently small $\varepsilon >0$ there exists $T_{\varepsilon}>0$ such that
\begin{eqnarray}
m_{S^2}(U_{R,T_{\varepsilon}}^{(s)}) > 4\pi (1 - \varepsilon)\nonumber
\end{eqnarray}
from Lemma 7.5. Since $\frac{\varphi_{\lambda}(R)}{R}$ converges to $0$, there exists $R_{\varepsilon}>0$ such that $T_{\varepsilon}\varphi_{\lambda}(R) < \varepsilon^2 R$ for any $R>R_{\varepsilon}$. Then it holds
\begin{eqnarray}
{\rm vol}_{g_{\lambda}^{(s)}}\bigg(\mu_{\lambda}^{-1}\bigg(B_{R,U_{R,T_{\varepsilon}}^{(s)}}\bigg)\bigg) &\le&  V_{g_{\lambda}^{(s)}}\bigg(p_0,\sqrt{\tau_{\lambda,T_{\varepsilon}}(R)} + \sqrt{\frac{sR^2}{4}}\bigg)\nonumber\\
&\le&  V_{g_{\lambda}^{(s)}}\bigg(p_0,(\varepsilon + \sqrt{\frac{s}{4}}\bigg)R\bigg)
\end{eqnarray}
for $R>R_{\varepsilon}$. On the other hand, we have the following inequality
\begin{eqnarray}
{\rm vol}_{g_{\lambda}^{(s)}}(\mu_{\lambda}^{-1}(B_{R,U})) \ge C_-m_{S^2}(U)R^2\cdot\varphi_{\lambda}(R) + \frac{\pi m_{S^2}(U)}{12}sR^3
\end{eqnarray}
for $U\subset {\rm Im}\mathbb{H}$ from Proposition 4.1. Thus we obtain
\begin{eqnarray}
V_{g_{\lambda}^{(s)}}\bigg(p_0,\bigg(\varepsilon + \sqrt{\frac{s}{4}}\bigg)R\bigg) &\ge& C_-m_{S^2}(U_{R,T_{\varepsilon}}^{(s)})R^2\cdot\varphi_{\lambda}(R) + \frac{\pi m_{S^2}(U_{R,T_{\varepsilon}}^{(s)})}{12}sR^3\nonumber\\
&\ge& \frac{\pi^2}{3}(1 - \varepsilon)sR^3\nonumber
\end{eqnarray}
from $(8)$ and $(9)$. Hence substituting $r=(\varepsilon + \sqrt{\frac{s}{4}})R$ gives
\begin{eqnarray}
\liminf_{r\to +\infty}\frac{V_{g_{\lambda}^{(s)}}(p_0,r)}{r^3} = \liminf_{R\to +\infty}\frac{V_{g_{\lambda}^{(s)}}(p_0,(\varepsilon + \sqrt{\frac{s}{4}})R)}{(\varepsilon + \sqrt{\frac{s}{4}})^3R^3} \ge \frac{1}{3}\frac{\pi^2(1 - \varepsilon)s}{(\varepsilon + \sqrt{\frac{s}{4}})^3}\nonumber
\end{eqnarray}
for any sufficiently small $\varepsilon >0$. Therefore the conclusion follows by taking the limit for $\varepsilon\to 0$.
\end{proof}
From Proposition 7.3 and 7.7, we obtain the followings.
\begin{thm}
Let $\lambda\in ({\rm Im}\mathbb{H})_0^{\mathbb{Z}}$ and $s>0$. Then the volume growth of hyperk\"ahler metric $g_{\lambda}^{(s)}$ is given by
\begin{eqnarray}
\lim_{r\to +\infty}\frac{V_{g_{\lambda}^{(s)}}(r)}{r^3} = \frac{8\pi^2}{3\sqrt{s}}.\nonumber
\end{eqnarray}
\end{thm}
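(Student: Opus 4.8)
The plan is to assemble the two one-sided bounds already established in this section. Proposition 7.3 gives
\[
\limsup_{r\to +\infty}\frac{V_{g_{\lambda}^{(s)}}(p_0,r)}{r^3}\le\frac{8\pi^2}{3\sqrt{s}},
\]
where $p_0\in X_{\lambda}$ is the base point with $\mu_{\lambda}(p_0)=0$ fixed throughout Sections 3, 4 and 7, while Proposition 7.7 gives the matching lower bound
\[
\liminf_{r\to +\infty}\frac{V_{g_{\lambda}^{(s)}}(p_0,r)}{r^3}\ge\frac{8\pi^2}{3\sqrt{s}}.
\]
Since $\liminf\le\limsup$ always holds, combining these two inequalities forces the limit to exist and to equal $8\pi^2/(3\sqrt{s})$.

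It then remains to pass from the distinguished point $p_0$ to the equivalence class $V_{g_{\lambda}^{(s)}}(r)$ appearing in the statement. For this I would invoke Proposition 5.1: the metric $g_{\lambda}^{(s)}$ is hyperk\"ahler, hence Ricci-flat, so in particular it has nonnegative Ricci curvature, and therefore $V_{g_{\lambda}^{(s)}}(p_1,r)/V_{g_{\lambda}^{(s)}}(p_0,r)\to 1$ as $r\to+\infty$ for every $p_1\in X_{\lambda}$. Consequently $\lim_{r\to +\infty}V_{g_{\lambda}^{(s)}}(p,r)/r^3=8\pi^2/(3\sqrt{s})$ holds for all $p\in X_{\lambda}$, which is exactly the content of the theorem as phrased with $V_{g_{\lambda}^{(s)}}(r)$.

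There is essentially no further obstacle, since all the analytic work — the corrected volume formula of Lemma 7.2, the distance lower bound of Lemma 7.1, the path-length upper bound built from $l_{\lambda}^{(s)}$, and the spherical-measure estimate for $U_{R,T}^{(s)}$ in Lemma 7.5 — has already been carried out in proving Propositions 7.3 and 7.7. The one point genuinely worth checking is that the two constants coincide exactly: in Proposition 7.3 the bound $8\pi^2/(3\sqrt{s})$ arises as $\limsup_{R\to\infty}8\bigl(P_+R^2\varphi_{\lambda}(R)+\tfrac{\pi^2 s}{3}R^3\bigr)/(s^{3/2}R^3)$, using $\varphi_{\lambda}(R)/R\to 0$ from Lemma 5.8, whereas in Proposition 7.7 it emerges from $\tfrac{1}{3}\pi^2(1-\varepsilon)s/(\varepsilon+\sqrt{s/4})^3$ in the limit $\varepsilon\to 0$; both reduce to $\tfrac{\pi^2 s}{3}/(s/4)^{3/2}=8\pi^2/(3\sqrt{s})$, so the squeeze is sharp and the theorem follows.
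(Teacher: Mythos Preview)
Your proposal is correct and follows exactly the paper's approach: the theorem is obtained by combining Propositions 7.3 and 7.7, and the paper's proof is literally the single line ``From Proposition 7.3 and 7.7, we obtain the followings.'' Your additional remarks about base-point independence via Proposition 5.1 and the matching of constants are valid elaborations of what the paper leaves implicit.
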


\bibliographystyle{plain}

\end{document}